\documentclass[a4paper,reqno]{amsart}

\usepackage{amsmath}
\usepackage{paralist}
\usepackage{graphics}
\usepackage{epsfig}
\usepackage{amsfonts}
\usepackage{amssymb}
\usepackage{hyperref}
\usepackage{epstopdf}
\usepackage{tikz}
\usetikzlibrary{intersections}
\usepackage{xcolor}
\usepackage{color}

\usepackage{mathpazo}

\tikzset{elegant/.style={smooth,thick,samples=50,cyan}}
\tikzset{eaxis/.style={->,>=stealth}}

\textheight=8.2 true in
\textwidth=5.0 true in
\topmargin 30pt
\setcounter{page}{1}

\newtheorem{theorem}{Theorem}[section]
\newtheorem{lemma}[theorem]{Lemma}

\def\x#1{(\ref{#1})}

\def\ifl{\iffalse }

\def\bc{\begin{center}}       \def\ec{\end{center}}
\def\ba{\begin{array}}        \def\ea{\end{array}}
\def\be{\begin{equation}}     \def\ee{\end{equation}}
\def\bea{\begin{eqnarray}}    \def\eea{\end{eqnarray}}
\def\beaa{\begin{eqnarray*}}  \def\eeaa{\end{eqnarray*}}

\numberwithin{equation}{section}

\newtheorem{definition}[theorem]{Definition}

\newtheorem{remark}[theorem]{Remark}
\numberwithin{equation}{section}

\begin{document}
	
	\title[The critical lines for a chemotaxis  system]
	{Sharp conditions on global existence and blow-up in a degenerate two-species and cross-attraction system}

	\author{Jos\'{e} Antonio Carrillo}
	\address{Mathematical Institute, University of Oxford, Oxford, OX2 6GG, UK}
	\email{carrillo@maths.ox.ac.uk}
	
	\author{Ke Lin}
	\address{School of Economics and Mathematics, Southwestern University of Economics and Finance, Chengdu, 610074 SICHU China}
	\email{linke@swufe.edu.cn}

	\subjclass[2010]{35K65, 92C17, 35J20, 35A01, 35B44}
	

	\keywords{Degenerate parabolic system, chemotaxis, variational methods, global existence, blow up}
	
	\begin{abstract}
			We consider a degenerate chemotaxis model with two-species and two-stimuli in dimension $d\geq 3$ and find two critical curves intersecting at one same point which separate the global existence and blow up of weak solutions to the problem. More precisely, above these curves (i.e. subcritical case), the problem admits a global weak solution obtained by the limits of strong solutions to an approximated system. Based on the second moment of solutions, initial data are constructed to make sure blow up occurs in finite time below these curves (i.e. critical and supercritical cases). In addition,  the existence or non-existence of minimizers of free energy functional is discussed on the critical curves and the solutions  exist globally in time if the size of initial data is small. We also investigate the crossing point between the critical lines in which a refined criteria in terms of the masses is given again to distinguish the dichotomy between global existence and blow up. We also show that the blow ups is simultaneous for both species.

	\end{abstract}

	\maketitle
	
	\section{Introduction}	
	The interaction motion of two cell populations in breast cancer cell invasion models in $\mathbb{R}^d$ ($d\geq 3$) have been described by the following chemotaxis system with two chemicals and nonlinear diffusion (cf. \cite{EVC19,KPE14-JTB})
	\be \label{TSTC}\begin{cases}
		u_t =  \Delta u^{m_1}-\nabla \cdot (u\nabla v), &x\in \mathbb{R}^d, t>0, \\[0.2cm]
		-\Delta v = w,  & x\in \mathbb{R}^d, t>0, \\[0.2cm]
		w_t =  \Delta w^{m_2}-\nabla \cdot (w\nabla z), &x\in \mathbb{R}^d, t>0, \\[0.2cm]
		-\Delta z = u,  & x\in \mathbb{R}^d, t>0, \\[0.2cm]
		u(x,0)=u_0(x), \,\,\,w(x,0)=w_0(x),& x\in \mathbb{R}^d,
	\end{cases}
	\ee
	where $m_1,m_2>1$ are constants.  Here, $u(x,t)$ and $w(x,t)$ denote the density of the macrophages and the tumor cells, $v(x,t)$ and $z(x,t)$ denote the concentration of the chemicals produced by $w(x,t)$ and $u(x,t)$, respectively. For simplicity, the initial data are assumed to satisfy
	\be{\label{intial data for u and w}}
	\begin{split}
		u_0&\in L^1(\mathbb{R}^d;(1+|x|^2)dx)\cap L^{\infty}(\mathbb{R}^d),\,\,\,\nabla u^{m_1}_0\in L^2(\mathbb{R}^d)\,\,\,\text{and}\,\,\,u_0\geq 0,\\
		w_0&\in L^1(\mathbb{R}^d;(1+|x|^2)dx)\cap L^{\infty}(\mathbb{R}^d),\,\,\,\nabla w^{m_2}_0\in L^2(\mathbb{R}^d)\,\,\,\text{and}\,\,\,w_0\geq 0.\\
	\end{split}
	\ee
	Since the solutions to the Poisson equations can be written by the Newtonian potential such as
	\begin{equation*}
		\begin{split}
			v(x,t)=&\mathcal{K}\ast w=c_d\int_{\mathbb{R}^d}\frac{w(y,t)}{|x-y|^{d-2}}dy, \quad z(x,t)=\mathcal{K}\ast u=c_d\int_{\mathbb{R}^d}\frac{u(y,t)}{|x-y|^{d-2}}dy
		\end{split}
	\end{equation*}
	with $\mathcal{K}(x)=\frac{c_d}{|x|^{d-2}}$ and $c_d$ is the surface area of the sphere $\mathbb{S}^{d-1}$ in $\mathbb{R}^d$, the original system (\ref{TSTC}) can be regarded as the interaction between two populations
	\be \label{Simple TSTC}\begin{cases}
		u_t =  \Delta u^{m_1}-\nabla \cdot (u\nabla \mathcal{K}\ast w), &x\in \mathbb{R}^d, t>0,  \\[0.2cm]
		w_t =  \Delta w^{m_2}-\nabla \cdot (w\nabla \mathcal{K}\ast u), &x\in \mathbb{R}^d, t>0, \\[0.2cm]
		u(x,0)=u_0(x), \,\,\,w(x,0)=w_0(x),& x\in \mathbb{R}^d,
	\end{cases}
	\ee
	where it follows that the solutions obey the mass conservation
	\begin{equation*}
		\begin{split}
			M_1:=&\int_{\mathbb{R}^d}u(x,t)dx=\int_{\mathbb{R}^d}u_0(x)dx\,\,\,\text{and}\,\,\,
			M_2:=\int_{\mathbb{R}^d}w(x,t)dx=\int_{\mathbb{R}^d}w_0(x)dx.
		\end{split}
	\end{equation*}
	The associated free energy functional $\mathcal{F}$ for (\ref{TSTC}) or (\ref{Simple TSTC}) is given by
	\begin{align*}
		\mathcal{F}[u(t),w(t)]
		=&\frac{1}{m_1-1}\int_ {\mathbb{R}^d}u^{m_1}dx+\frac{1}{m_2-1}\int_ {\mathbb{R}^d}w^{m_2}dx-c_d\mathcal{H}[u,w],
	\end{align*}
	which is non-increasing with respect to time since for smooth case it satisfies the following decreasing property
	\begin{align*}
		\frac{d}{dt}\mathcal{F}[u(t),w(t)]=&-\int_ {\mathbb{R}^d}u\Big|\frac{m_1}{m_1-1}\nabla u^{m_1-1}-\nabla v\Big|^2dx\\
		&-\int_ {\mathbb{R}^d}w\Big|\frac{m_2}{m_2-1}\nabla w^{m_2-1}-\nabla z\Big|^2dx,
	\end{align*}
	where
	\begin{equation*}
		\begin{split}
			\mathcal{H}[u,w]=\iint_ {\mathbb{R}^d\times \mathbb{R}^d}\frac{u(x,t)w(y,t)}{|x-y|^{d-2}}dxdy.
		\end{split}
	\end{equation*}
		Only one-single population and chemical signal consisting of chemotaxis system is the well-known Keller-Segel model by taking into account volume filling constraints (see \cite{KS70-JTB,PH1996-CAM,CC2006-JMP}) reading as
		\be \label{Keller Segel}
		\begin{cases}
			u_t =  \Delta u^{m_1}-\nabla \cdot (u\nabla \mathcal{K}\ast u), &x\in \mathbb{R}^d, t>0,  \\[0.2cm]
			u(x,0)=u_0(x), & x\in \mathbb{R}^d,
		\end{cases}
		\ee
		which has immensely investigated over the last decades. See \cite{BBTW15-MMM,Horstmann2003-DMV,KS70-JTB,Perthame07,CCY} for the biological motivations and a complete overview of mathematical results for related more general aggregation-diffusion models. Here the diffusion exponent $m_1$ is taken to be supercritical $0<m_1<m_c:=2-2/d$, critical $m_1=m_c$ and subcritical $m_1>m_c$  if $d\geq 3$. The critical number $m_c$ is chosen to produces a balance between diffusion and potential drift in mass invariant scaling. For the subcritical $m_1>m_c$ in the sense that diffusion dominates, the solutions are globally solvable without any restriction on the size of the initial data \cite{KY12-SJMA,Sugiyama06-DIE,Sugiyama06-JDE}. However, in the supercritical case, the attraction is stronger leading to a coexistence of global existence of solutions and blow-up behavior. More precisely, finite-time blow up occurs for large initial data, see \cite{CCE12-CPDE} for $m_1=1$, \cite{CLW12} for $m_1=2d/(d+2)$, \cite{CW14-DM} for $2d/(d+2)<m_1< m_c$, and \cite{Sugiyama06-DIE} for $1<m_1<m_c$. But there also exists a global weak solution with decay properties under some smallness condition on the initial mass \cite{BianLiu2013,CLW12, CPZ04-MJ,Sugiyama06-JDE}. The critical case $m_1=m_c$ is investigated in \cite{Carrillo09-CVPDE,Sugiyama07-ADE} showing the existence of a sharp mass constant $M^*$ allowing for a dichotomy: if $\|u\|_1=M_1<M^*$ the solutions exist for all time, whereas if $M_1\geq M^*$ there exists solution with non-positive free energy functional blowing up. In addition, such similar dichotomy  was found in \cite{BDP2006-BDP, DP2004-CRMASP,JL1992-TAMS}  earlier in dimension $d=2$ and linear diffusion $m_1=1$ for (\ref{Keller Segel}) with $\mathcal{K}(x)=-1/(2\pi)\log |x|$ , where $M^*$ was replaced by $8\pi$. We also note that the results in \cite{BCM2008-CPAM} prove that solutions blow up as a delta Dirac at the center	of mass as time increases in critical mass $M_1=8\pi$. Sufficient conditions for nonlinear diffusion $m_1>1$ to prevent blow up are derived in \cite{CC2006-JMP}.
		
		The variational viewpoint to analyse problems of the type (\ref{Keller Segel}) has also been an active field of research. For instance, there have been recent results about the properties of global minimizers of the corresponding free energy functional, including the existence, radial symmetry and uniqueness and so on, since they  not only correspond to steady states of  (\ref{Keller Segel}) in some particular cases, but also are candidates for the large time asymptotics of solutions to (\ref{Keller Segel}).  Lion's concentration-compactness principle  \cite{Lion1984-Poincare} (see also \cite{Bedrossian11-AML}) can be directly  applied to the subcritical $m_1>m_c$ if $d\geq 3$ and allows the existence of minimizer which further satisfies some regularities properties (see \cite{CHMV18-CVPDE}). The uniqueness of minimizer in this case is ensured in \cite{LY1987-CMP} and such unique minimizer is also an exponential attractor of solutions of (\ref{Keller Segel}) when the initial data is radially symmetric and compactly supported by using the mass comparison principle (see \cite{KY12-SJMA}). In the critical case $m_1=m_c$, the free energy functional doses not admit global minimizers except for the critical mass case $M_1=M^*$ introduced above \cite{CCH07}. Such minimizers were used in \cite{Carrillo09-CVPDE} to describe the infinite time blow-up profile. For the nonlinear-diffusion in two dimension, the long time asymptotics of solutions is fully characterized in \cite{CHV19-IM} based on the  unique existence of radial minimizer of $\mathcal{F}$ \cite{CCV15-SJMA}. We  refer to \cite{BCC12-JFA} for a discussion on the existence of many stationary states for $m_1=1$ and $d=2$ in the critical case $M_1=8\pi$ and their basins of attraction.
		
		Back to linear two-species system (\ref{TSTC}) in $d=2$, similar to the role of the critical mass $8\pi$ in (\ref{Keller Segel}) (\cite{BDP2006-BDP,  DP2004-CRMASP}),  the critical curve $M_1M_2-4\pi(M_1+M_2)=0$ for two species is discovered in \cite{HWYZ2019-Nonlinearity}: solutions exist globally if $M_1M_2-4\pi(M_1+M_2)<0$ and blow up occurs if $M_1M_2-4\pi(M_1+M_2)>0$. The key tool for the  proof of the global existence part is using the Moser-Trudinger inequality as in \cite{SW05-JEMS} in two dimensions. One can use partial results in \cite{SW05-JEMS} to check that mimimizers indeed exist in the case $M_1M_2-4\pi(M_1+M_2)=0$. We also mention that such nonlinear system (\ref{TSTC}) and the one-single population system (\ref{Keller Segel}) can be formally regarded as gradient flows of the free energy functional in the probability measure space with the Euclidean Wasserstein metric \cite{AGG2005,JKO1998-SJMA}. For general $n$-component multi-populations chemotaxis system, in \cite{KW2019-JDE,KW2020-EJAM} the authors have made considerable progress on these aspects and obtain the global arguments in subcritical and critical cases. The Neumann initial-boundary value problem is analysed in \cite{KX2019-NS, KX2020-CVPDE, TW2015-DCDS,YWZ2018-Non}.
		
		The aim  of this paper is to give a thorough understanding of the well-posedness and asymptotic behavior for (\ref{TSTC}) and (\ref{Simple TSTC}) in $d\geq 3$ and to show the existence or non-existence of global minimizers in critical cases. We make use of bold faces  $\bf{m},\bf{A},\bf{B},\bf{I},\bf{M},\cdots$ to denote two-dimensional vectors through the paper and assume that ${\bf{A}}=(a_1,a_2)\leq(\geq){\bf{B}}=(b_1,b_2)$ means that $a_1\leq(\geq)b_1$ and $b_1\leq(\geq)b_2$, respectively. If $(u,w)$ is a solution of (\ref{Simple TSTC}), then for any $\lambda>0$ the following scaling
		$$
		u_\lambda(x,t)=\lambda^{m_2}u(\lambda^{\frac{m_1+m_2-m_1m_2}{2}}x,\lambda^{m_1}t),\,\,
		w_\lambda(x,t)=\lambda^{m_1}w(\lambda^{\frac{m_1+m_2-m_1m_2}{2}}x,\lambda^{m_2}t)
		$$
		is also a solution, where the above scaling becomes mass invariant for both $u$ and $w$  if and only if ${\bf{m}}:=(m_1,m_2)=(m_c,m_c)$.
		When ${\bf{m}}$ satisfy
		\begin{equation}\label{critical line1}
			\begin{split}
				m_1m_2+2m_1/d=m_1+m_2,
			\end{split}
		\end{equation}
		the mass conservation law only holds for $w$, whereas only $u$ preserves $L^1$-norm if
		\begin{equation}\label{critical line2}
			\begin{split}
				m_1m_2+2m_2/d=m_1+m_2.
			\end{split}
		\end{equation}
		The curves (\ref{critical line1}) and (\ref{critical line2}) can be shown to be the sharp conditions separating the global existence and blow up. Our main result in Theorem 1.3 shows the following dichotomy: above the two red curves in Figure \ref{figregimes}, in the sense that $m_1m_2+2m_1/d> m_1+m_2$ or $m_1m_2+2m_2/d> m_1+m_2$, weak solutions globally exists and blow up occurs below the red curves for certain initial data regardless of their initial masses (see Theorem \ref{subcritical or supercritical theorem}). Several results are also obtained at the critical curves (see Theorem \ref{critical theorem}). In addition, both two lines will intersect at the point $(m_c,m_c)$. Therefore, we consider the $(m_1,m_2)\in (1,\infty)^2$ parameter range divided by the following three critical cases (red curve in Figure \ref{figregimes}):
		\begin{equation*}
			\begin{split}
				&\text{Line}\,\,\,L_1:\,\,\,m_1m_2+2m_1/d= m_1+m_2\,\,\,\text{with}\,\,\, m_1\in\left(m_c,d/2\right),\,\,\,m_2\in\left(1,m_c\right);\\
				&\text{Line}\,\,\,L_2:\,\,\, m_1m_2+2m_2/d= m_1+m_2\,\,\,\text{with}\,\,\, m_1\in\left(1,m_c\right),\,\,\,m_2\in\left(m_c,d/2\right);\\
				&\text{The intersection point}\,\,\,\text{{\bf I}}:=(m_c,m_c),
			\end{split}	
		\end{equation*}
		
		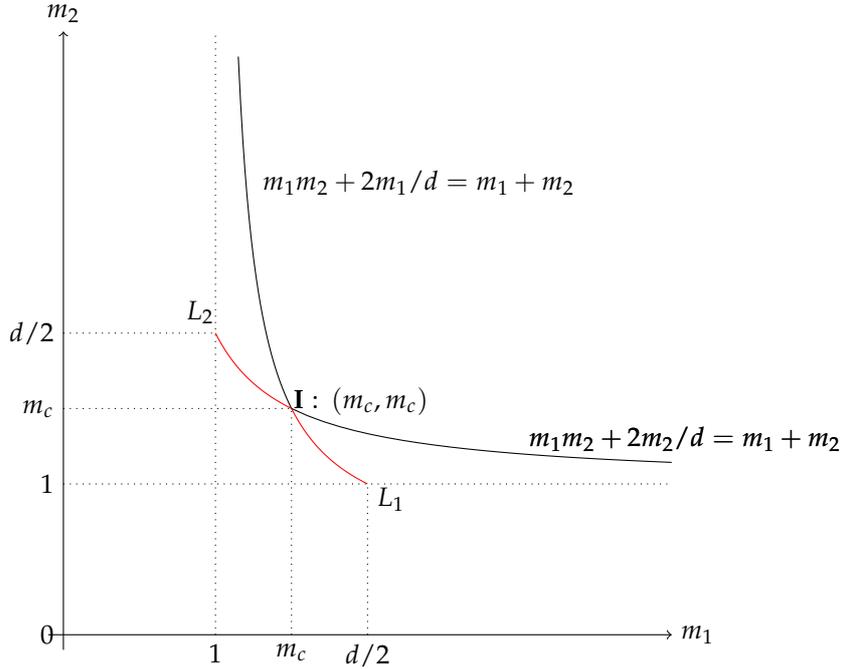
\begin{figure}[ht!]
			\label{figregimes}
			\centering
			\begin{tikzpicture}
				\draw (0,0) node[left] {$0$};
				\draw[->] (-0.2,0) --(8,0) node[right] {$m_1$};
				\draw[->] (0,-0.2) --(0,8) node[above] {$m_2$};
				\draw[dotted] (2,0)--(2,8);
				\draw[dotted] (0,2)--(8,2);
				\draw (2,0) node[below]{$1$};
				\draw (0,2) node[left]{$1$};
				\draw[dotted] (3,0)--(3,3) node[below] at(3,0) {$m_c$};
				\draw[dotted] (0,3)--(3,3) node[left] at(0,3) {$m_c$};
				\draw[dotted] (4,0)--(4,2) node[below] at(4,0) {$d/2$};	
				\draw[dotted] (0,4)--(2,4) node[left] at(0,4) {$d/2$};			
				\draw[domain =2.3:3] plot (\x ,{1+2/(\x-2)});
				\draw[domain =3:4,red] plot (\x ,{1+2/(\x-2)});
				\node[right] at(2.5,6) {$m_1m_2+2m_1/d=m_1+m_2$};
				\draw[domain =3:8] plot (\x ,{2+2/(\x-1)}) node[right] at(6,2.6)  {$m_1m_2+2m_2/d=m_1+m_2$};
				\draw[domain =2:3,red] plot (\x ,{2+2/(\x-1)});
				\node[right] at(6,2.6)  {$m_1m_2+2m_2/d=m_1+m_2$};
				\draw (2.9,3.1) node[right]{${\bf{I}}:\,(m_c,m_c)$};
				\node[right] at(4,1.8) {${L_1}$};
				\node[above] at(1.8,4) {${L_2}$};
			\end{tikzpicture}
			\caption{Parameter lines determining the critical regimes.}
		\end{figure}
		Based on the above discussion, we say that ${\bf{m}}=(m_1,m_2)$ is subcritical if
		\begin{equation*}
			m_1m_2+2m_1/d>m_1+m_2\,\,\,\,\text{or}\,\,\,\, m_1m_2+2m_2/d>m_1+m_2,
		\end{equation*}
		and ${\bf{m}}=(m_1,m_2)$ is supercritical if
		\begin{equation*}
			m_1m_2+2m_1/d<m_1+m_2\,\,\,\,\text{and}\,\,\,\, m_1m_2+2m_2/d<m_1+m_2.
		\end{equation*}
		Notice that this corresponds to be above (subcritical) or below (supercritical) the red curves in Figure \ref{figregimes}.
		We also define subsets of $L^1(\mathbb{R}^d)$ as
		\begin{equation*}\label{definition of S1}
			\begin{split}
				S_{M_1}:=\{f\geq 0:f\in L^1(\mathbb{R}^d)\cap L^{m_1}(\mathbb{R}^d)\,\,\,\text{and}\,\,\,\|f\|_1=M_1\}
			\end{split}
		\end{equation*}
		and
		\begin{equation*}\label{definition of S2}
			\begin{split}
				S_{M_2}:=\{g\geq 0:g\in L^1(\mathbb{R}^d)\cap L^{m_2}(\mathbb{R}^d)\,\,\,\text{and}\,\,\,\|g\|_1=M_2\}.
			\end{split}
		\end{equation*}
		Now the definition of weak solution for (\ref{TSTC}) or (\ref{Simple TSTC}) is give as
		\begin{definition}\label{weak solution}
			Let $m_1,m_2> 1$, $d\geq 3$ and $T>0$. Suppose the initial data $(u_0,w_0)$ satisfies some classical regularities (\ref{intial data for u and w}). Then $(u,w)$ of nonnegative functions defined in $\mathbb{R}^d\times(0,T)$ is called a weak solution if
			\begin{align*}
				i)&\,\,(u,w)\in (C([0,T);L^1(\mathbb{R}^d))\cap L^{\infty}(\mathbb{R}^d\times(0,T)))^2,\\
				&(u^{m_1},w^{m_2})\in (L^2(0,T;H^1(\mathbb{R}^d)))^2;\\
				ii)&\,\,(u,w)\,\,\text{satisfies}\\
				&\int^T_0\int_{\mathbb{R}^d}u
				\phi_{1t}dxdt+\int_{\mathbb{R}^d}u_0(x)\phi_1(x,0)dx=\int^T_0\int_{\mathbb{R}^d}(\nabla u^{m_1}-u\nabla v)\cdot \nabla \phi_1 dxdt,\\
				&\int^T_0\int_{\mathbb{R}^d}w
				\phi_{2t}dxdt+\int_{\mathbb{R}^d}w_0(x)\phi_2(x,0)dx=\int^T_0\int_{\mathbb{R}^d}(\nabla w^{m_2}-w\nabla z)\cdot \nabla \phi_2 dxdt,
			\end{align*}
			for any test functions $\phi_1\in \mathcal{D}(\mathbb{R}^d\times[0,T)) $ and $\phi_2\in \mathcal{D}(\mathbb{R}^d\times[0,T)) $ with $v=\mathcal{K}\ast w$ and $z=\mathcal{K}\ast u$.
			
		\end{definition}
		
		For a given weak solution, we also define:
		\begin{definition}\label{free energy solution}
			Let $T>0$. Then $(u,w)$ is called a free energy solution with some regular initial data $(u_0,w_0)$ on $(0,T)$ if $(u,w)$ is a weak solution and moreover satisfies
			$(u^{(2m_1-1)/2},w^{(2m_2-1)/2)})\in (L^2(0,T;H^1(\mathbb{R}^d)))^2$ and
			\be{\label{inequality for free energy2}}
			\begin{split}
				\mathcal{F}&[u(t),w(t)]+\int^t_ {0}\int_ {\mathbb{R}^d}u\Big|\frac{m_1}{m_1-1}\nabla u^{m_1-1}-\nabla v\Big|^2dxds\\
				&+\int^t_ {0}\int_ {\mathbb{R}^d}w\Big|\frac{m_2}{m_2-1}\nabla w^{m_2-1}-\nabla z\Big|^2dxds
				\leq \mathcal{F}[u_0,w_0]
			\end{split}
			\ee
			for all $t\in(0,T)$ with $v=\mathcal{K}\ast w$ and $z=\mathcal{K}\ast u$.
			
		\end{definition}
		
		Our first main result for (\ref{TSTC}) or (\ref{Simple TSTC}) above or below lines $L_1$ and $L_2$ is:
		\begin{theorem}\label{subcritical or supercritical theorem}
			Let  $m_1,m_2>1$. Suppose that the initial data $(u_0,w_0)$ with $\|u_0\|_1=M_1,\|w_0\|_1=M_2$ fulfills (\ref{intial data for u and w}). Then\\
			$i)$ If  ${\bf{m}}$ is subcritical, there exists a global free energy solution. \\
			$ii)$ If  ${\bf{m}}$ is supercritical, then one can construct large initial data ensuring blow up in finite time.
		\end{theorem}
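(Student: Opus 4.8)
The plan is to separate the two cases by complementary techniques: energy coercivity for global existence, and a virial (second moment) identity for blow up.

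For part $i)$ I would run a regularization/compactness scheme. First, mollify and truncate $(u_0,w_0)$ and replace the degenerate diffusions by uniformly parabolic ones (e.g. add $\varepsilon\Delta u,\varepsilon\Delta w$, or use $(u+\varepsilon)^{m_1}$), obtaining global smooth nonnegative solutions $(u_\varepsilon,w_\varepsilon)$ that conserve the regularized masses and satisfy the free energy dissipation. The crux is a uniform-in-$\varepsilon$ bound, which I get from coercivity of $\mathcal{F}$ in the subcritical regime. I would estimate the coupling by the Hardy--Littlewood--Sobolev inequality
\[
\mathcal{H}[u,w]\le C\|u\|_{p}\|w\|_{q},\qquad \tfrac1p+\tfrac1q=\tfrac{d+2}{d},
\]
and interpolate $\|u\|_p\le M_1^{1-\theta_1}\|u\|_{m_1}^{\theta_1}$, $\|w\|_q\le M_2^{1-\theta_2}\|w\|_{m_2}^{\theta_2}$. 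A direct computation shows that the choice $p=m_1$ (resp. $q=m_2$) produces exactly the balance $m_1m_2+2m_1/d>m_1+m_2$ (resp. $m_1m_2+2m_2/d>m_1+m_2$), i.e. an admissible interpolation with $\theta_1/m_1+\theta_2/m_2<1$ exists precisely on the subcritical side of $L_1$ or $L_2$. Young's inequality then absorbs $c_d\mathcal{H}$ into $\tfrac{1}{m_1-1}\|u\|_{m_1}^{m_1}+\tfrac{1}{m_2-1}\|w\|_{m_2}^{m_2}$ up to a constant depending only on $M_1,M_2$, giving $\mathcal{F}\ge \tfrac12\big(\tfrac{1}{m_1-1}\|u\|_{m_1}^{m_1}+\tfrac{1}{m_2-1}\|w\|_{m_2}^{m_2}\big)-C(M_1,M_2)$ and hence, via $\mathcal{F}[u_\varepsilon(t),w_\varepsilon(t)]\le\mathcal{F}[u_0,w_0]$, uniform control of $\|u_\varepsilon\|_{m_1},\|w_\varepsilon\|_{m_2}$ on every $[0,T]$.

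From these bounds I would bootstrap: the dissipation gives uniform $L^2(0,T;H^1)$ control of $u_\varepsilon^{(2m_1-1)/2},w_\varepsilon^{(2m_2-1)/2}$ and of $u_\varepsilon^{m_1},w_\varepsilon^{m_2}$; an Alikakos/Moser iteration upgrades these to uniform $L^\infty(\mathbb{R}^d\times(0,T))$ bounds; the weak formulation gives equicontinuity in time. Aubin--Lions--Simon then furnishes strong $L^p$ convergence of a subsequence, enough to pass to the limit in the drift and diffusion nonlinearities, and lower semicontinuity of $\mathcal{F}$ together with weak convergence of the dissipation terms yields the free energy inequality (\ref{inequality for free energy2}). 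The main obstacle in $i)$ is pinpointing the sharp interpolation that permits absorption exactly on the subcritical side, and securing compactness strong enough to pass to the limit in the degenerate quadratic terms $u\nabla v$, $w\nabla z$.

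For part $ii)$ I would argue by contradiction using the virial method: assume a global free energy solution and compute, for $I(t):=\int_{\mathbb{R}^d}|x|^2(u+w)\,dx$ (with $d\ge3$ so $d-2>0$),
\[
I'(t)=2d\!\int_{\mathbb{R}^d}\!u^{m_1}dx+2d\!\int_{\mathbb{R}^d}\!w^{m_2}dx-2(d-2)c_d\,\mathcal{H}[u,w],
\]
the cross terms collapsing because $x\cdot(x-y)+y\cdot(y-x)=|x-y|^2$. Rewriting through $\mathcal{F}$ gives the key identity
\[
I'(t)=2(d-2)\mathcal{F}[u,w]+\Big(2d-\tfrac{2(d-2)}{m_1-1}\Big)\!\int u^{m_1}+\Big(2d-\tfrac{2(d-2)}{m_2-1}\Big)\!\int w^{m_2}.
\]
Since $\mathcal{F}$ is non-increasing and $\inf\mathcal{F}=-\infty$ in the supercritical regime by the mass-invariant scaling, I would construct data (scaled bumps, or the rescaling $u_0^\lambda,w_0^\lambda$ of a fixed profile) with $\mathcal{F}[u_0,w_0]<0$ and $I(0)<\infty$. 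Where both $m_1,m_2\le m_c$ the two bracketed coefficients are $\le0$, so $I'(t)\le2(d-2)\mathcal{F}[u_0,w_0]<0$ forces $I$ to vanish in finite time, contradicting $I\ge0$; hence the solution cannot be global. The main obstacle is the part of the supercritical region with one exponent above $m_c$ (say $m_1>m_c$, where the coefficient of $\int u^{m_1}$ is positive while that of $\int w^{m_2}$ is negative): the non-coercive diffusion term must be dominated. I expect to handle this by exploiting the freedom in choosing the data—selecting profiles for which the subcritical HLS interpolation controls $\int u^{m_1}(t)$ by $\mathcal{H}[u,w]$ and $\int w^{m_2}$, thereby closing a differential inequality $I'(t)\le-\kappa<0$ on a suitable time window, or by rescaling so that the attraction dominates both diffusion terms on the support. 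Finally, because $I$ and the cross term $\mathcal{H}$ couple the two densities symmetrically, the same construction forces the blow up to be simultaneous in $u$ and $w$.
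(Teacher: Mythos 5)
Your part $i)$ is essentially the paper's argument: regularize with $(u+\varepsilon)^{m_1}$, use the HLS inequality with the endpoint $p=m_1$ (resp. $q=m_2$) plus interpolation against the conserved mass and Young's inequality to absorb $c_d\mathcal{H}$ into the entropy terms strictly on the subcritical side of $L_1$ (resp. $L_2$), deduce uniform $L^{m_1}\times L^{m_2}$ bounds from the free energy inequality, upgrade by Moser iteration, and pass to the limit. The only point you gloss over, which the paper treats explicitly, is that the direct HLS estimate $\mathcal{H}[u,w]\le C\|u\|_{m_1}\|w\|_{m_1d/((d+2)m_1-d)}$ requires $m_1<d/2$; when $m_1\ge d/2$ one must first interpolate $\|u\|_m\le \|u\|_1^{(m_1-m)/(m_1-1)}\|u\|_{m_1}^{m_1(m-1)/(m_1-1)}$ for an intermediate $m\in(m_2/(m_2+2/d-1),d/2)$. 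This is a routine fix, not a gap.

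Part $ii)$ has a genuine gap. Your virial identity and the rewriting $I'(t)=2(d-2)\mathcal{F}+\big(2d-\tfrac{2(d-2)}{m_1-1}\big)\int u^{m_1}+\big(2d-\tfrac{2(d-2)}{m_2-1}\big)\int w^{m_2}$ are correct, and the coefficient of $\int u^{m_i}$ is nonpositive exactly when $m_i\le m_c$. But the supercritical region is \emph{not} contained in $(1,m_c]^2$: for example with $d=3$ the point $(m_1,m_2)=(1.4,1.1)$ lies strictly below both $L_1$ and $L_2$ yet has $m_1>m_c=4/3$, so the coefficient of $\int u^{m_1}$ is strictly positive and your inequality $I'\le 2(d-2)\mathcal{F}[u_0,w_0]$ fails. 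You correctly identify this as the obstacle, but "selecting profiles for which the subcritical HLS interpolation controls $\int u^{m_1}$" or "rescaling so that attraction dominates" is not a proof — and note that the subcritical interpolation is exactly what is \emph{unavailable} below $L_1$. The paper closes this differently: it never routes through $\mathcal{F}<0$ in the general supercritical case, but works directly with $G(0)<0$, constructing compactly supported data $u_0=A(1-|x|^d/a^d)_+^{\iota_1}$, $w_0=B(1-|x|^d/a^d)_+^{\iota_2}$ with the tuned exponents $\iota_1=\tfrac{2m_2}{(m_1+m_2-m_1m_2)d}$, $\iota_2=\tfrac{2m_1}{(m_1+m_2-m_1m_2)d}$. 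These are chosen so that $a^d A^{m_1}$, $a^d B^{m_2}$ and $a^{d+2}AB$ all scale identically in the support size $a$ (making the sign of $G(0)$ depend only on a scale-invariant ratio of the data, condition (\ref{blow-up condition for initial data2})), while $I(0)/|G(0)|\to 0$ as $a\to 0$, which is what lets the second-moment argument reach a contradiction. Without this (or an equivalent) construction, your proposal does not establish blow-up on the bulk of the supercritical region.
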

		On the lines $L_1$, $L_2$ and intersection point $\bf{I}$, our second main result is as follows.
		\begin{theorem}\label{critical theorem}
			Let  $m_1,m_2>1$. Suppose that the initial data $(u_0,w_0)$ with $\|u_0\|_1=M_1,\|w_0\|_1=M_2$ fulfills (\ref{intial data for u and w}). Then
			
			\noindent $i)$ If  ${\bf{m}}$ is ${\bf{I}}$, then there exists a number $M_c>0$ such that if $M_1M_2<M^2_c$, solutions globally exist and if $M_1M_2/(M^{m_c}_1+M^{m_c}_2)>M^{2/d}_c/2$,  there exists a finite time blow-up solution. Moreover, non-zero global minimizers of $\mathcal{F}$ exist in $S_{M_1}\times S_{M_2}$ if we are at the crossing point ${\bf{M}}=(M_c,M_c)$.
			
			$ii)$ If  ${\bf{m}}$ is on $L_1$, there exists a number $M_{2c}>0$ with the following properties: if $M_2<M_{2c}$, solutions globally exist and $\inf_{f\in S_{M_1}}\inf_{g\in S_{M_2}}\mathcal{F}[f,g]=0$ if $M_2=M_{2c}$, but there exist no  non-zero global minimizers of $\mathcal{F}$ in $S_{M_1}\times S_{M_2}$. In addition, blow-up solution exists if
			\begin{equation*} \frac{\left(\int_{\mathbb{R}^d}u^{m_1/m_2}_0dx\right)^{m_2/m_1}\left(\int_{\mathbb{R}^d}w_0dx\right)}{\left(\int_{\mathbb{R}^d}u^{m_1/m_2}_0dx\right)^{m_2}+\left(\int_{\mathbb{R}^d}w_0dx\right)^{m_2}}>N_0\,\,\,\text{with some}\,\,\,N_0>0.
			\end{equation*}
			If ${\bf{m}}$ is on $L_2$, there exists $M_{1c}>0$ with the similar properties for $M_1$ and  blow-up solution exists if
			\begin{equation*}
				 \frac{\left(\int_{\mathbb{R}^d}u_0dx\right)\left(\int_{\mathbb{R}^d}w^{m_2/m_1}_0dx\right)^{m_1/m_2}}{\left(\int_{\mathbb{R}^d}u_0dx\right)^{m_1}+\left(\int_{\mathbb{R}^d}w^{m_2/m_1}_0dx\right)^{m_1}}>N_0.
			\end{equation*}	
			
			$iii)$ A simultaneous blow-up phenomenon exists if ${\bf{m}}$ is critical .
		\end{theorem}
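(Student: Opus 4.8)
The plan is to establish part $iii)$ by a transfer-of-regularity argument: if blow-up occurs at a finite time $T$, I show that neither density can stay bounded while the other becomes singular, so both must blow up at the same time $T$. I would work at the level of the strong solutions of the approximated system from which the free energy solutions of (\ref{Simple TSTC}) are built, and pass to the limit at the end. The starting point is the extensibility criterion for these solutions, namely $T<\infty \implies \limsup_{t\to T}\big(\|u(\cdot,t)\|_{\infty}+\|w(\cdot,t)\|_{\infty}\big)=+\infty$.

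Argue by contradiction and suppose, say, that $w$ stays bounded, $K:=\sup_{t<T}\|w(\cdot,t)\|_{\infty}<\infty$, while $u$ blows up. First I would upgrade this into control of the drift felt by $u$. Since the mass is conserved, $\|w(\cdot,t)\|_{1}=M_2$, so $w\in L^1\cap L^\infty$ uniformly in $t<T$; splitting the kernel $\nabla\mathcal{K}(x)=-c_d(d-2)x/|x|^d$ into its near-origin part (which lies in $L^1$ because $\int_{|x|<1}|x|^{-(d-1)}\,dx<\infty$ for $d\ge 3$) and its far part (bounded on $|x|>1$), Young's convolution inequality yields a uniform bound $\|\nabla v(\cdot,t)\|_{\infty}\le C(K,M_2)$ for $v=\mathcal{K}\ast w$, while $\Delta v=-w$ is bounded as well. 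Thus the first equation of (\ref{Simple TSTC}) reduces to a porous-medium equation $u_t=\Delta u^{m_1}-\nabla\cdot(u\nabla v)$ with bounded drift and bounded drift divergence.

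Next I would derive the $L^p$-energy inequality for $u$: testing with $p u^{p-1}$ and integrating by parts, and using $-\Delta v=w$, gives
\[
\frac{d}{dt}\int_{\mathbb{R}^d}u^p\,dx+\frac{4p(p-1)m_1}{(p+m_1-1)^2}\int_{\mathbb{R}^d}\big|\nabla u^{(p+m_1-1)/2}\big|^2\,dx=(p-1)\int_{\mathbb{R}^d}u^p w\,dx\le (p-1)K\|u\|_p^p .
\]
Because $m_1>1$, the diffusion term controls $\|u\|_p^p$ through the Gagliardo--Nirenberg--Sobolev inequality (interpolating $\|u^{(p+m_1-1)/2}\|_{2p/(p+m_1-1)}$ between $\|\nabla u^{(p+m_1-1)/2}\|_2$ and the conserved mass $\|u\|_1=M_1$), so the right-hand side can be absorbed up to a lower-order term. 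A Moser/Alikakos iteration along $p=p_k\to\infty$ then produces a bound $\sup_{t<T}\|u(\cdot,t)\|_{\infty}\le C(T,M_1,K)<\infty$, contradicting the blow-up of $u$. By the symmetric argument, with the roles of $u,w$, of $m_1,m_2$ and of $L_1,L_2$ exchanged, $w$ cannot blow up alone either. Hence on $L_1$, $L_2$ and at $\mathbf{I}$ the singularity of one species forces the singularity of the other at the same time $T$, which is the asserted simultaneous blow-up.

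I expect the main obstacle to be the uniform $L^\infty$ estimate in the third step: running the Moser iteration for the degenerate ($m_1>1$) diffusion requires keeping the iteration constants uniform up to $T$ and carefully handling the degeneracy of $u^{m_1}$, and it must be carried out on the regularized system so that the resulting bound survives the limit defining the weak solution. The drift, however, is genuinely lower order once $\|\nabla v\|_\infty$ (equivalently $\|w\|_\infty$) is under control, so no criticality is encountered here --- the scaling criticality that produced blow-up in parts $i)$--$ii)$ is precisely what is broken by assuming one component bounded.
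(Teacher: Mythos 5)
Your proposal addresses only part $iii)$ of the theorem. Parts $i)$ and $ii)$ --- the identification of the critical masses $M_c$, $M_{1c}$, $M_{2c}$, global existence below them (Theorem \ref{global existence for subcritical case}), blow-up above them (Theorem \ref{blow-up theorem}), and the existence/non-existence of non-zero minimizers of $\mathcal{F}$ (Theorems \ref{extremals of H in case 3} and \ref{nonexistence of minimiser in the case 1}) --- constitute the bulk of the statement and are not touched by your argument, so as a proof of the theorem as stated it is substantially incomplete.

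Concerning part $iii)$ itself, what you prove is not what the paper asserts. The paper's precise formulation of ``simultaneous blow-up'' (see Theorem \ref{blow-up theorem} and the criterion (\ref{extension criterion2})) is $\lim_{t\to T_{\max}}\|u\|_{m_1}=\lim_{t\to T_{\max}}\|w\|_{m_2}=\infty$, and its proof is essentially one line: the decay of $\mathcal{F}$ combined with the HLS estimate of Lemma \ref{estimate for H lem} yields the two-sided control $\|u(t)\|^{m_1}_{m_1}\leq C\|w(t)\|^{m_2}_{m_2}+C$ and $\|w(t)\|^{m_2}_{m_2}\leq C'\|u(t)\|^{m_1}_{m_1}+C'$ (inequalities (\ref{u controlled by w})--(\ref{w controlled by u})), and this is exactly where the hypothesis that ${\bf m}$ is critical enters. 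Your transfer-of-regularity argument instead shows that neither $\|u\|_\infty$ nor $\|w\|_\infty$ can stay bounded up to $T_{\max}$. That is a correct and reasonable statement, but: (a) it concerns the sup-norms rather than the $L^{m_1}$/$L^{m_2}$ norms whose simultaneous divergence is the claim; (b) as you yourself note, it holds for every $m_1,m_2>1$ and uses no criticality, so it cannot be the content of an assertion made specifically on the critical curves; and (c) there is a circularity in the implementation, since the contradiction hypothesis $\sup_{t<T}\|w(\cdot,t)\|_\infty<\infty$ concerns the limiting free-energy solution while the $L^p$/Moser estimates are to be run on the approximations $(u_\epsilon,w_\epsilon)$, for which no uniform bound on $\|w_\epsilon\|_\infty$ follows from boundedness of the limit --- you would need to justify the $L^p$ identities directly for the weak solution. (A minor remark: once $\int u^pw\leq K\|u\|_p^p$ is in hand, Gronwall gives $\|u(t)\|_p\leq\|u_0\|_p e^{Kt}$ and letting $p\to\infty$ yields the sup bound with no Gagliardo--Nirenberg absorption or iteration.) To obtain the paper's statement you should instead combine the extensibility criterion (\ref{extension criterion2}) with the mutual control of the two energy norms.
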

		
		We summarize our second main result on the intersection point ${\bf{I}}$, see Figure \ref{figreg}. The blue curve $M_1M_2=M^2_c$ intersects with the green curve  $M_1M_2/(M^{m_c}_1+M^{m_c}_2)=M^{2/d}_c/2$ at the point ${\bf{J}}=(M_c,M_c)$. Theorem {\ref{critical theorem}} implies that below the curve $M_1M_2=M^2_c$ solutions globally exist and above the curve $M_1M_2/(M^{m_c}_1+M^{m_c}_2)=M^{2/d}_c/2$  blow up happens.

		\begin{figure}[ht!]
			\centering
			\begin{tikzpicture}
				\draw (0,0) node[left] {$0$};
				\draw[->] (-0.2,0) --(8,0) node[right] {$M_1$};
				\draw[->] (0,-0.2) --(0,8) node[above] {$M_2$};
				\draw[dotted] (3,0)--(3,3) node[below] at(3,0) {$M_c$};
				\draw[dotted] (0,3)--(3,3) node[left] at(0,3) {$M_c$};	
				\draw[domain =1.6:8, green] plot (\x ,{1+4/(\x-1)});
				\node[right] at(6,2) {$M_1M_2/(M^{m_c}_1+M^{m_c}_2)=M^{2/d}_c/2$};
				\draw[domain =1.2:7,blue] plot (\x ,{9/\x});
				
				\draw (2.9,3.1) node[right]{${\bf{J}}:\,(M_c,M_c)$};
				\node[right] at(6.5,1) {${M_1M_2=M^2_c}$};
			\end{tikzpicture}
			\caption{Parameter lines on intersection point $\bf{I}$.}
			\label{figreg}
		\end{figure}
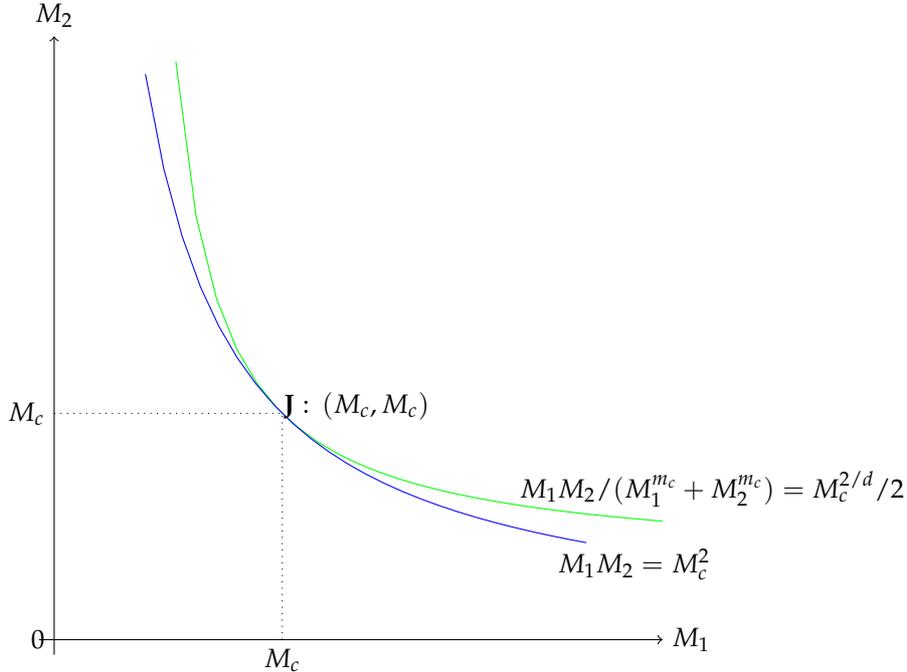
		It is an open problem to determine the sharp relation between the masses leading to dichotomy in the intersection point $\bf{I}$ and the long time asymptotics on the red curves $L_1$ and $L_2$ in Figure \ref{figregimes}.
		
		The organization of the paper is as follows: we first construct an approximated system for (\ref{TSTC}) in Section 2, and provide an sufficient condition for global existence of smooth solution and then obtain global weak solution or free energy solution of (\ref{TSTC}) by passing limits upon a prior estimate. Section 3 deals with properties of free energy functional, including the lower and upper bounds, and the existence or non-existence of non-zero minimizers if ${\bf{m}}$ is critical. Finally, we prove that the solutions are globally solvable if ${\bf{m}}$ is subcritical or critical with small initial data in Section 4 and construct blow-up solutions if  ${\bf{m}}$ is supercritical or critical with large masses in Section 5.
		
		\section{Approximated system}
		As mentioned in the introduction, we first consider an  approximated system
		\be \label{TSTC approxiamation}\begin{cases}
			u_{\epsilon t}(x,t) =  \Delta (u_\epsilon+\epsilon)^{m_1}-\nabla \cdot (u_{\epsilon}\nabla v_{\epsilon}), &x\in \mathbb{R}^d, t>0, \\[0.2cm]
			v_{\epsilon} = \mathcal{K}\ast w_\epsilon,  & x\in \mathbb{R}^d, t>0, \\[0.2cm]
			w_{\epsilon t}(x,t) =  \Delta (w_\epsilon+\epsilon)^{m_2}-\nabla \cdot (w_{\epsilon} \nabla z_{\epsilon} ), &x\in \mathbb{R}^d, t>0, \\[0.2cm]
			z_{\epsilon} = \mathcal{K}\ast u_\epsilon,  & x\in \mathbb{R}^d, t>0, \\[0.2cm]
			u_{\epsilon}(x,0)=u^{\epsilon}_0(x)\geq 0,w_{\epsilon}(x,0)=w^{\epsilon}_0(x)\geq 0,& x\in \mathbb{R}^d
		\end{cases}
		\ee
		with $u^{\epsilon}_0$ and   $w^{\epsilon}_0$ being the convolution of $u_0$ and $w_0$ with a sequence of mollifiers and $\|u^{\epsilon}_0\|_1=\|u\|_1=M_1$ and $\|w^{\epsilon}_0\|_1=\|w\|_1=M_2$. Then the uniform a priori estimate for solutions to (\ref{TSTC approxiamation}) is given if $m_1$ and $m_2$ are suitably large, thus global weak solution or even free energy solution exists by letting $\epsilon $ tends to 0.
		
		By virtue of the local existence of strong solution for only one-single population chemotaxis system (see {\cite[Proposition 4.1]{Sugiyama06-DIE}}), one obtains:
		\begin{lemma}\label{Local existence for AS}Let $m_1,m_2> 1$. Then there exists $T^{\epsilon}_{\max}\in(0,\infty]$ denoting the maximal existence time such that (\ref{TSTC approxiamation}) has a unique nonnegative strong solution $(u_{\epsilon},w_{\epsilon})\in \left(\mathbb{W}^{2,1}_{p}(Q_T)\right)^2 $ with some $p>1$, where $Q_T=\mathbb{R}^d\times (0,T)$ with $T\in(0,T^{\epsilon}_{\max}]$ and
			$$\mathbb{W}^{2,1}_{p}(Q_T):=\{u\in L^p(0,T;W^{2,p}(\mathbb{R}^d))\cap W^{1,p}(0,T;L^{p}(\mathbb{R}^d))\}.$$
			Moreover, if  $T^{\epsilon}_{\max}<\infty$, then
			$$
			\lim_{t\rightarrow T^{\epsilon}_{\max}}\left[\|u_{\epsilon}(\cdot,t)\|_{\infty}+\|w_{\epsilon}(\cdot,t)\|_{\infty}\right]=\infty.
			$$
		\end{lemma}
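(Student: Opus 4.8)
The plan is to reduce the coupled system (\ref{TSTC approxiamation}) to two decoupled single-population problems of the type treated in \cite[Proposition 4.1]{Sugiyama06-DIE} and to close the coupling by a contraction mapping argument on a short time interval. The key structural observation is that the regularization by $\epsilon$ renders each diffusion operator uniformly parabolic: writing $\Delta(u_\epsilon+\epsilon)^{m_1}=\nabla\cdot\big(m_1(u_\epsilon+\epsilon)^{m_1-1}\nabla u_\epsilon\big)$, the diffusion coefficient is bounded below by $m_1\epsilon^{m_1-1}>0$ whenever $u_\epsilon\geq 0$ and from above by $m_1(\|u_\epsilon\|_\infty+\epsilon)^{m_1-1}$. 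Hence the standard $L^p$ parabolic theory is available away from the degeneracy, and the only genuinely new feature relative to the single-species case is the cross-coupling through the drift potentials $v_\epsilon=\mathcal{K}\ast w_\epsilon$ and $z_\epsilon=\mathcal{K}\ast u_\epsilon$.

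First I would set up the iteration. Fix $R>0$ and $T>0$ and, in the complete metric space
\[
X_T:=\Big\{(u,w)\in \big(C([0,T];L^1(\mathbb{R}^d))\cap L^\infty(Q_T)\big)^2:\ u,w\geq0,\ \|u\|_{L^\infty(Q_T)}+\|w\|_{L^\infty(Q_T)}\leq R\Big\},
\]
define a map $\Phi:(\tilde u,\tilde w)\mapsto(u,w)$, where $u$ solves $u_t=\Delta(u+\epsilon)^{m_1}-\nabla\cdot(u\nabla\tilde v)$ with $\tilde v=\mathcal{K}\ast\tilde w$ and $u(\cdot,0)=u^{\epsilon}_0$, and $w$ solves the analogous problem driven by $\tilde z=\mathcal{K}\ast\tilde u$. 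For each frozen potential, elliptic regularity for $-\Delta\tilde v=\tilde w$ with $\tilde w\in L^1\cap L^\infty$, together with the Calder\'on--Zygmund and Riesz potential estimates for the Newtonian kernel, yields $\nabla\tilde v\in L^\infty(Q_T)$ and $\nabla^2\tilde v\in L^p(Q_T)$ for every $p<\infty$, so the drift is as regular as required. Each scalar equation is then a non-degenerate quasilinear parabolic problem with a prescribed smooth drift, and \cite[Proposition 4.1]{Sugiyama06-DIE} (or the underlying Ladyzhenskaya--Solonnikov--Ural'tseva $L^p$-theory) produces a unique strong solution in $\mathbb{W}^{2,1}_{p}(Q_T)$; the maximum principle, using $u^{\epsilon}_0\geq0$, preserves nonnegativity, so for $R$ large and $T$ small $\Phi$ maps $X_T$ into itself.

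Next I would show that $\Phi$ is a contraction. For two inputs the potential differences $\tilde v_1-\tilde v_2=\mathcal{K}\ast(\tilde w_1-\tilde w_2)$ and $\tilde z_1-\tilde z_2=\mathcal{K}\ast(\tilde u_1-\tilde u_2)$ are controlled, by the same potential estimates, in terms of the distance between the inputs; subtracting the two corresponding parabolic problems and invoking the regularizing $L^p$ estimates for the (uniformly parabolic) linearized operator, one bounds the distance between the outputs by $CT^{\theta}$ times the input distance for some $\theta>0$. Choosing $T=T^\epsilon$ small so that $CT^{\theta}<1$, Banach's fixed point theorem yields a unique solution $(u_\epsilon,w_\epsilon)$ on $[0,T^\epsilon]$, which by construction lies in $(\mathbb{W}^{2,1}_{p}(Q_{T^\epsilon}))^2$.

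Finally, the maximal existence time and the blow-up alternative follow by the usual continuation argument: let $T^\epsilon_{\max}$ be the supremum of existence times, and suppose $T^\epsilon_{\max}<\infty$ while $\limsup_{t\to T^\epsilon_{\max}}(\|u_\epsilon(\cdot,t)\|_\infty+\|w_\epsilon(\cdot,t)\|_\infty)<\infty$. Then $R$ can be fixed uniformly on $[0,T^\epsilon_{\max})$, the drift potentials remain uniformly regular, the $\mathbb{W}^{2,1}_{p}$ bounds do not degenerate, and restarting the local construction from a time close to $T^\epsilon_{\max}$ would extend the solution beyond $T^\epsilon_{\max}$, contradicting maximality; hence $\|u_\epsilon(\cdot,t)\|_\infty+\|w_\epsilon(\cdot,t)\|_\infty\to\infty$ as $t\to T^\epsilon_{\max}$. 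I expect the main obstacle to be the contraction estimate: since the diffusion is quasilinear, the difference of two solutions does not solve a clean linear equation, so one must linearize carefully and absorb the $(u+\epsilon)^{m_1-1}$-type coefficients using the uniform parabolicity granted by $\epsilon$, while simultaneously tracking the regularity transferred by the convolution drift.
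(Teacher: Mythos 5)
Your proposal is correct in outline and matches the route the paper intends: the paper offers no proof of this lemma beyond invoking {\cite[Proposition 4.1]{Sugiyama06-DIE}} for the single-species approximated system, and your scheme of freezing the potentials $v_\epsilon=\mathcal{K}\ast w_\epsilon$, $z_\epsilon=\mathcal{K}\ast u_\epsilon$, using the uniform parabolicity $m_i(s+\epsilon)^{m_i-1}\geq m_i\epsilon^{m_i-1}$ together with the Riesz/Calder\'on--Zygmund estimates for the drift, and closing with a fixed point plus the standard continuation argument is precisely how that single-species result extends to the cross-attraction case. The only step requiring more care is the contraction metric --- the difference of two solutions of the quasilinear equation is best measured in $C([0,T];H^{-1}(\mathbb{R}^d))$ or via the $L^1$-contraction property of the regularized porous-medium operator rather than by a naive linearization --- and you correctly identify this as the delicate point.
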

		
		Now we recall the Hardy-Littlewood-Sobolev (HLS) inequality which we frequently use later (see \cite{Lieb83-CMP}  or {\cite[Chapter 4]{Lieb2001}}).
		\begin{lemma}\label{HLS inequality}
			Let  $0<\lambda<d$, and let the Riesz potential $I_{\lambda}(h)$ of a function $h$ defined by
			$$
			I_{\lambda}(h)(x)=\frac{1}{|x|^{d-\lambda}}\ast h=\int_{\mathbb{R}^d}\frac{h(y)}{|x-y|^{d-\lambda}}dy,\,\,\,\,\,\,x\in\mathbb{R}^d.
			$$	
			Then for $h\in L^{\kappa_1}(\mathbb{R}^d)$ and for $\kappa_1,\kappa_2>1$ with $\frac{1}{\kappa_2}=\frac{1}{\kappa_1}-\frac{\lambda}{d}$, then there exists a sharp constant $C_{\text{HLS}}=C_{HLS}(d,\lambda,\kappa_1)>0$ such that
			$$
			\|I_{\lambda}(h)\|_{\kappa_2}\leq C_{\text{HLS}}\|h\|_{\kappa_1}.
			$$
		\end{lemma}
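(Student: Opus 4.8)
The plan is to establish the mapping bound $\|I_{\lambda}(h)\|_{\kappa_2}\le C\|h\|_{\kappa_1}$ with \emph{some} finite constant first, and then indicate how the \emph{best} such constant is obtained. The exponent relation $\frac{1}{\kappa_2}=\frac{1}{\kappa_1}-\frac{\lambda}{d}$ is forced by scaling: under $h\mapsto h(\cdot/r)$ both sides must transform with the same power of $r$, because the kernel $|x|^{-(d-\lambda)}$ is homogeneous of degree $-(d-\lambda)$. The core of the boundedness argument is a pointwise estimate dominating $I_{\lambda}(h)(x)$ by a geometric mean of the Hardy--Littlewood maximal function $Mh(x)$ and the fixed number $\|h\|_{\kappa_1}$.

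To produce the pointwise estimate I would fix $x$ and $R>0$ and split
\[
I_{\lambda}(h)(x)=\int_{|x-y|<R}\frac{|h(y)|}{|x-y|^{d-\lambda}}\,dy+\int_{|x-y|\ge R}\frac{|h(y)|}{|x-y|^{d-\lambda}}\,dy=:A_R(x)+B_R(x).
\]
For $A_R$ I decompose the ball into dyadic annuli $\{2^{-j-1}R\le|x-y|<2^{-j}R\}$, $j\ge0$; on each annulus $|x-y|^{-(d-\lambda)}\lesssim (2^{-j}R)^{-(d-\lambda)}$, while the mass of $|h|$ there is controlled by $(2^{-j}R)^{d}Mh(x)$, so summing the resulting geometric series in $2^{-j\lambda}$ (convergent since $\lambda>0$) gives $A_R(x)\lesssim R^{\lambda}Mh(x)$. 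For $B_R$, Hölder's inequality with exponents $\kappa_1$ and its conjugate $\kappa_1'=\kappa_1/(\kappa_1-1)$ yields
\[
B_R(x)\le\|h\|_{\kappa_1}\left(\int_{|x-y|\ge R}|x-y|^{-(d-\lambda)\kappa_1'}\,dy\right)^{1/\kappa_1'},
\]
and the last integral converges precisely when $(d-\lambda)\kappa_1'>d$, i.e. $\lambda<d/\kappa_1$, which is exactly the subcriticality condition $1/\kappa_2>0$; evaluating it yields $B_R(x)\lesssim R^{\lambda-d/\kappa_1}\|h\|_{\kappa_1}$.

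Balancing the two contributions by choosing $R$ so that $R^{\lambda}Mh(x)$ and $R^{\lambda-d/\kappa_1}\|h\|_{\kappa_1}$ are comparable gives the interpolated pointwise bound
\[
I_{\lambda}(h)(x)\lesssim\bigl(Mh(x)\bigr)^{\kappa_1/\kappa_2}\,\|h\|_{\kappa_1}^{1-\kappa_1/\kappa_2}.
\]
Raising this to the power $\kappa_2$, integrating in $x$, and applying the Hardy--Littlewood maximal inequality $\|Mh\|_{\kappa_1}\lesssim\|h\|_{\kappa_1}$ --- valid exactly because $\kappa_1>1$ --- then produces $\|I_{\lambda}(h)\|_{\kappa_2}\lesssim\|h\|_{\kappa_1}$, completing the boundedness with a non-sharp constant. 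It is worth noting that this is where the hypothesis $\kappa_1>1$ is genuinely used: the endpoint $\kappa_1=1$ only gives a weak-type bound.

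For the sharpness claim I would pass to the equivalent bilinear functional $\iint f(x)|x-y|^{-(d-\lambda)}g(y)\,dx\,dy$ and argue that, by the Riesz rearrangement inequality, it does not decrease when $f$ and $g$ are replaced by their symmetric decreasing rearrangements, which reduces the search for extremals to radial functions. The value of the optimal constant then follows from Lieb's competing-symmetries method, exploiting the conformal invariance of the functional under stereographic projection of $\mathbb{R}^{d}$ onto $\mathbb{S}^{d}$. \textbf{Main obstacle.} The boundedness step is routine once the maximal-function domination is in place; the genuine difficulty --- and the reason one cites \cite{Lieb83-CMP} rather than reproving it --- is the identification of the \emph{sharp} constant $C_{\mathrm{HLS}}$ together with its optimizers, which requires the full rearrangement-plus-conformal apparatus and is not accessible by the elementary splitting argument above.
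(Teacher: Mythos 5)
Your argument is correct, but it is worth noting that the paper does not prove this lemma at all: it is quoted verbatim from Lieb \cite{Lieb83-CMP} and \cite[Chapter 4]{Lieb2001}, so your Hedberg-type derivation supplies a self-contained proof where the paper offers only a citation. Your splitting into $A_R+B_R$, the dyadic-annulus domination $A_R(x)\lesssim R^{\lambda}Mh(x)$, the H\"older estimate $B_R(x)\lesssim R^{\lambda-d/\kappa_1}\|h\|_{\kappa_1}$ (with the convergence condition $(d-\lambda)\kappa_1'>d$ correctly identified with $1/\kappa_2>0$), and the optimization in $R$ giving $I_{\lambda}(h)(x)\lesssim (Mh(x))^{\kappa_1/\kappa_2}\|h\|_{\kappa_1}^{1-\kappa_1/\kappa_2}$ are all accurate, and the final application of the maximal inequality legitimately uses $\kappa_1>1$. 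This route buys an elementary, real-variable proof of boundedness, which is all the paper ever uses (the inequality is invoked only qualitatively, with $\lambda=1,2$, never through the numerical value of $C_{\mathrm{HLS}}$); once boundedness holds, the ``sharp constant'' of the statement exists simply as the infimum of admissible constants. One small imprecision: the competing-symmetries/conformal-invariance computation you describe yields the explicit value of the optimal constant only in the conformally invariant diagonal case $\kappa_1=\kappa_2'$ (equivalently $p=q$ in the bilinear form); for general exponents Lieb's theorem gives existence of extremals via rearrangement and a concentration argument, but no closed-form constant. Since you explicitly defer that part to \cite{Lieb83-CMP}, this does not affect the validity of your proof for the purposes of the paper.
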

		An equivalent form of the  HLS inequality can be stated that if
		\begin{equation*}
			\begin{split}
				\frac{1}{p}+\frac{1}{q}=1+\frac{\lambda}{d},
			\end{split}
		\end{equation*}
		and  $h_1\in L^{p}(\mathbb{R}^d),h_2\in L^{q}(\mathbb{R}^d)$ with $p,q>1$, then there exists a $C_{HLS}=C_{HLS}(d,\lambda,p)>0$ such that
		\begin{equation*}
			\begin{split}
				\left|\iint_{\mathbb{R}^d\times\mathbb{R}^d}\frac{h_1(x)h_2(y)}{|x-y|^{d-\lambda}}dxdy\right|\leq C_{HLS}\|h_1\|_{p}\|h_2\|_q.
			\end{split}
		\end{equation*}

		Inspired by \cite{TaoWinkler12-JDE}, the global solvability of (\ref{TSTC approxiamation}) can be achieved based on assumptions on the boundedness for $\|u_\epsilon\|_{m_1}$ and 	$\|w_\epsilon\|_{m_2}$ with some large $m_1$ and $m_2$.
		
		\begin{lemma}\label{the condition for global existence}
			Let $T\in(0,T^{\epsilon}_{\max}]$. Assume that ${\bf{m}}$ satisfies
			\be{\label{connection between m_1 and m_2}}
			\begin{split}	
				m_1m_2+2m_1m_2/d> m_1+m_2.
			\end{split}
			\ee
			Suppose that there exists a constant $C>0$ such that $(u_{\epsilon},w_{\epsilon})$ of (\ref{TSTC approxiamation}) with initial data $(u^{\epsilon}_0,w^{\epsilon}_0)$ being the convolution of  $(u_0,w_0)$ satisfies
			\be\label{Asssumption on prior estimate for uwepsilon}
			\|u_{\epsilon}(t)\|_{m_1}\leq C\,\,\,\,\text{and}\,\,\,\,\|w_{\epsilon}(t)\|_{m_2}\leq  C\,\,\,\,\text{for}\,\,\,\,t\in(0,T).
			\ee
			Then there exists a constant $C=C(d,m_1,m_2,u_{0\epsilon},w_{0\epsilon})>0$ such that
			\be\label{prior estimate for uwepsilon2}
			\|(u_{\epsilon}(t),w_{\epsilon}(t))\|_{r}\leq  C\,\,\,\text{for}\,\,\,r\in[1,\infty)\,\,\,\,\text{and}\,\,\,\,t\in(0,T)
			\ee
			and\label{prior estimate for vzepsilon1}
			\be\label{3.1}
			\|(v_{\epsilon}(t),z_{\epsilon}(t))\|_{r}+\|(\nabla v_{\epsilon}(t),\nabla z_{\epsilon}(t))\|_{r}\leq  C\,\,\,\text{for}\,\,\,r\in[1,\infty]\,\,\,\,\text{and}\,\,\,\,t\in(0,T).
			\ee
		\end{lemma}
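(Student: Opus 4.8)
The plan is to run a coupled $L^p$--$L^q$ testing argument in the spirit of \cite{TaoWinkler12-JDE}, taking the assumed bounds \eqref{Asssumption on prior estimate for uwepsilon} as the base integrability and absorbing the chemotactic drift into the degenerate diffusion to bootstrap up to every finite exponent. First I would test the first equation of \eqref{TSTC approxiamation} with $u_\epsilon^{p-1}$ and the third with $w_\epsilon^{q-1}$ for large $p,q>1$. Integration by parts turns each diffusion term into a nonnegative dissipation, and since $m_1,m_2>1$ we may drop the regularization via $(u_\epsilon+\epsilon)^{m_1-1}\ge u_\epsilon^{m_1-1}$, so that up to positive constants
\[
\int_{\mathbb{R}^d} u_\epsilon^{p-1}\,\Delta(u_\epsilon+\epsilon)^{m_1}\,dx \le -\frac{4(p-1)m_1}{(p+m_1-1)^2}\bigl\|\nabla u_\epsilon^{(p+m_1-1)/2}\bigr\|_2^2 ,
\]
and similarly for $w_\epsilon$. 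For the drift terms I would write $u_\epsilon^{p-1}\nabla u_\epsilon=\tfrac1p\nabla u_\epsilon^{p}$, integrate by parts once more, and use $-\Delta v_\epsilon=w_\epsilon$, $-\Delta z_\epsilon=u_\epsilon$ to convert them into the cross integrals $\tfrac{p-1}{p}\int u_\epsilon^{p}w_\epsilon$ and $\tfrac{q-1}{q}\int w_\epsilon^{q}u_\epsilon$. Summing the two identities then produces a single differential inequality in which the two dissipations $\|\nabla u_\epsilon^{(p+m_1-1)/2}\|_2^2$ and $\|\nabla w_\epsilon^{(q+m_2-1)/2}\|_2^2$ are set against these two cross integrals.

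The heart of the argument, and the step I expect to be the main obstacle, is estimating the cross integrals so that \emph{both} can be absorbed by the two dissipations at once. Controlling $\int u_\epsilon^{p}w_\epsilon$ through $\|w_\epsilon\|_{m_2}$ alone would force a stronger restriction on $\mathbf m$; instead I would split it by Hölder's inequality into a product of a high $L$-norm of $u_\epsilon$ and a high $L$-norm of $w_\epsilon$ and interpolate \emph{each} factor by Gagliardo--Nirenberg--Sobolev against its own gradient dissipation and its base bound from \eqref{Asssumption on prior estimate for uwepsilon}. Writing $f=u_\epsilon^{(p+m_1-1)/2}$ and $g=w_\epsilon^{(q+m_2-1)/2}$, this yields an estimate of the form $C\,\|\nabla f\|_2^{a}\|\nabla g\|_2^{b}$ with constants depending only on the base norms, and after optimizing the Hölder exponent a direct computation shows that the total homogeneity $a+b$ drops strictly below the absorbing threshold $2$ precisely when \eqref{connection between m_1 and m_2}, i.e. $\tfrac1{m_1}+\tfrac1{m_2}<1+\tfrac2d$, holds; this is exactly where and how the hypothesis enters. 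A weighted Young inequality then bounds $\|\nabla f\|_2^{a}\|\nabla g\|_2^{b}$ by a small multiple of $\|\nabla f\|_2^2+\|\nabla g\|_2^2$ plus a constant, leaving the dissipations dominant. A further Gagliardo--Nirenberg step, using that the $L^1$, $L^{m_1}$ and $L^{m_2}$ norms are of strictly lower order, converts part of the remaining dissipation into a superlinear sink, so that $y:=\|u_\epsilon\|_p^p+\|w_\epsilon\|_q^q$ satisfies an autonomous differential inequality forcing it to stay bounded uniformly on $(0,T)$, with a bound depending only on $d,m_1,m_2$, the base constant in \eqref{Asssumption on prior estimate for uwepsilon}, and the data, but not on $T$. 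Since the computation closes for arbitrarily large $p,q$, interpolation with the conserved $L^1$ mass yields \eqref{prior estimate for uwepsilon2} for every $r\in[1,\infty)$.

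Finally, to obtain \eqref{3.1} I would feed the bounds just proved into the potential representations $v_\epsilon=\mathcal{K}\ast w_\epsilon$ and $z_\epsilon=\mathcal{K}\ast u_\epsilon$. As $\mathcal{K}(x)=c_d|x|^{-(d-2)}$ is the Riesz potential of order $2$ and $\nabla\mathcal{K}(x)\sim|x|^{-(d-1)}$ that of order $1$, the Hardy--Littlewood--Sobolev inequality of Lemma \ref{HLS inequality}, applied with $w_\epsilon,u_\epsilon\in L^r$ for every finite $r$, gives $L^r$ control of $v_\epsilon,z_\epsilon,\nabla v_\epsilon,\nabla z_\epsilon$ on every finite range. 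The endpoint $r=\infty$ is reached by the standard splitting of the kernel into its singular part near the origin, handled by any $L^r$ with $r>d/2$ for $v_\epsilon,z_\epsilon$ and $r>d$ for $\nabla v_\epsilon,\nabla z_\epsilon$, together with its integrable tail, handled by the conserved $L^1$ masses $M_1,M_2$. This establishes \eqref{3.1} for all $r\in[1,\infty]$ and completes the proof.
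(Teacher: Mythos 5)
Your proposal is correct and follows essentially the same route as the paper: the same coupled $L^p$--$L^q$ testing, the same Hölder splitting of both cross integrals with each factor interpolated by Gagliardo--Nirenberg against its own dissipation and the base bounds \eqref{Asssumption on prior estimate for uwepsilon}, the same observation that the total gradient homogeneity falls below $2$ exactly under \eqref{connection between m_1 and m_2}, and the same extra Gagliardo--Nirenberg sink yielding a uniform ODE bound. The only (inessential) deviation is at the endpoint $r=\infty$ of \eqref{3.1}, where you split the Riesz kernel into singular and tail parts while the paper combines the Calderon--Zygmund inequality with Morrey's embedding; both are standard and equally valid.
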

		
		\begin{proof}
			We split the proof into three steps. \\
			
			\noindent{\bf{Step 1.}}  \emph{The choices of $p$ and $q$}. There exist $\bar{p}>1,\bar{q}>1,r_1>1$  and $r_2>1$ such that for some $p>\bar{p}$ and $q>\bar{q}$ one has
			\be{\label{choice of p and q 1}}
			p>\begin{cases}m_1+1,\,\,\,&\text{if}\,\,\,m_1\geq\frac{d}{2},m_2\geq\frac{d}{2},\\[0.2cm]
				 \max\left\{m_1+1,\frac{(m_1-1)(m_2-1)d}{d-2m_2},\frac{m_1(d-2)}{2m_2}\right\},\,\,\,&\text{if}\,\,\,m_1\geq\frac{d}{2},m_2<\frac{d}{2},\\[0.2cm]
				 \max\left\{m_1+1,\frac{dm^2_1+d-2m_1}{d-2m_1},\frac{m_1(d-2)}{2m_2}\right\},\,\,\,&\text{if}\,\,\,m_1<\frac{d}{2},m_2\geq\frac{d}{2},\\[0.2cm]
				 \max\left\{m_1+1,\frac{dm^2_1+d-2m_1}{d-2m_1},\frac{(m_1-1)(m_2-1)d}{d-2m_2},\frac{m_1(d-2)}{2m_2}\right\},\,\,\,&\text{if}\,\,\,m_1<\frac{d}{2},m_2<\frac{d}{2},\\[0.2cm]
			\end{cases}
			\ee
			
			\be{\label{choice of p and q 2}}
			\begin{split}
				\frac{1}{r_1}<&1-\frac{d-2}{(q+m_2-1)d},
			\end{split}
			\ee
			
			\be \label{choice of p and q 3}
			\frac{1}{r_1}>\max\left\{1-\frac{1}{m_2},
			\frac{d-2}{d}\cdot\frac{p}{p+m_1-1}\right\},
			\ee
			
			\be{\label{choice of p and q 4}}
			\begin{split}	
				\frac{1}{r_2}>&\frac{d-2}{d}\cdot\frac{1}{p+m_1-1},\\
			\end{split}
			\ee

			\be \label{choice of p and q 5}
			\frac{1}{r_2}<\min\left\{
			\frac{1}{m_1},1-\frac{d-2}{d}\cdot\frac{q}{q+m_2-1}\right\}
			\ee
			
			and
			\be{\label{choice of p and q 6}}
			\begin{split}	
				\frac{\frac{p}{m_1}-\frac{1}{r_1}}{1-\frac{d}{2}+\frac{(p+m_1-1)d}{2m_1}}+	 \frac{\frac{1}{m_2}-1+\frac{1}{r_1}}{1-\frac{d}{2}+\frac{(q+m_2-1)d}{2m_2}}<\frac{2}{d},
			\end{split}
			\ee
			as well as
			\be{\label{choice of p and q 7}}
			\begin{split}	
				\frac{\frac{1}{m_1}-\frac{1}{r_2}}{1-\frac{d}{2}+\frac{(p+m_1-1)d}{2m_1}}+	 \frac{\frac{q}{m_2}-1+\frac{1}{r_2}}{1-\frac{d}{2}+\frac{(q
						+m_2-1)d}{2m_2}}<\frac{2}{d}.
			\end{split}
			\ee
			
			Let us first pick $r_1>1$ and $r_2>1$ fulfilling
			\be{\label{assumption on theta and mu}}
			\begin{split}
				r_1<\min\left\{\frac{d}{d-2},\frac{m_2}{m_2-1}\right\}
			\end{split}
			\ee
			and
			\be{\label{assumption on theta and mu2}}
			\begin{split}
				r_2>m_1,
			\end{split}
			\ee
			and let
			\begin{equation}{\label{defintion of q}}
				\begin{split}	
					q:=\frac{m_2(p-1)}{m_1}+1.
				\end{split}
			\end{equation}
			In (\ref{defintion of q}), $p>m_1+1$ implies $q>m_2+1$. The assertions in  (\ref{choice of p and q 1})-(\ref{choice of p and q 2}) and (\ref{choice of p and q 4}) easily hold by  sufficiently large $p\geq \bar{p}$ with some $\bar{p}>1$ and $q\geq \bar{q}$ with some $\bar{q}>1$.
			
			To see the possible choice of $r_1$ satisfying  (\ref{choice of p and q 2})-(\ref{choice of p and q 3}), we first observe that $1-\frac{1}{m_2}\geq \frac{d-2}{d}\cdot\frac{p}{p+m_1-1}$ is true for any $p>1$ if $m_2\geq\frac{d}{2}$, and $\frac{1}{r_1}>1-\frac{1}{m_2}$ holds by (\ref{assumption on theta and mu}) as well as $1-\frac{1}{m_2}<1-\frac{d-2}{(q+m_2-1)d}$ for any $q>1$. Thus the asserted $r_1$ can be actually found. When $m_2<\frac{d}{2}$, one has $\frac{1}{r_1}>\frac{d-2}{d}\cdot\frac{p}{p+m_1-1}
			> 1-\frac{1}{m_2}$. The first inequality is guaranteed by (\ref{assumption on theta and mu}) and the second is due to
			\begin{equation*}
				\begin{split}	
					\frac{d-2}{d}\cdot \frac{p}{p+m_1-1}> 1-\frac{1}{m_2}\Longleftrightarrow&
					\left(\frac{1}{m_2}-\frac{2}{d}\right)p> \frac{(m_1-1)(m_2-1)}{m_2}\\
					\Longleftrightarrow& p> \frac{(m_1-1)(m_2-1)d}{d-2m_2}
				\end{split}
			\end{equation*}
			by (\ref{choice of p and q 1}) if $m_2<\frac{d}{2}$. Moreover, from (\ref{defintion of q}) and  (\ref{choice of p and q 1}),
			$\frac{d-2}{d}\cdot\frac{p}{p+m_1-1}<1-\frac{d-2}{(q+m_2-1)d}$.  Therefore, one can also choose $r_1>1$ satisfying (\ref{choice of p and q 2})-(\ref{choice of p and q 3}) in the case $m_2<\frac{d}{2}$.
			
			Similar to the choice of  $r_2$, if $m_1\geq \frac{d}{2}$ then it follows from (\ref{assumption on theta and mu2}) that $\frac{1}{r_2}<\frac{1}{m_1}\leq
			1-\frac{d-2}{d}\cdot\frac{q}{q+m_2-1}$, in which
			(\ref{choice of p and q 4})-(\ref{choice of p and q 5}) can be satisfied due to $\frac{d-2}{d}\cdot\frac{1}{p+m_1-1}<\frac{1}{m_1}$. If $m_1<\frac{d}{2}$, (\ref{choice of p and q 1}) implies $\frac{d-2}{d}\cdot\frac{1}{p+m_1-1}<1-\frac{d-2}{d}\cdot\frac{q}{q+m_2-1}<\frac{1}{m_1}$, and the assertion is true.
			
			Since (\ref{connection between m_1 and m_2}) ensures
			\begin{equation*}
				\begin{split}	
					m_1/m_2-m_1< 2m_1/d-1,
				\end{split}
			\end{equation*}
			then
			\begin{equation*}{\label{choice of p and q 8}}
				\begin{split}	
					\frac{\frac{p}{m_1}-\frac{1}{r_1}}{1-\frac{d}{2}+\frac{(p+m_1-1)d}{2m_1}}&+	 \frac{\frac{1}{m_2}-1+\frac{1}{r_1}}{1-\frac{d}{2}+\frac{(q+m_2-1)d}{2m_2}}\\
					&=\frac{\frac{p}{m_1}-\frac{1}{r_1}}{1+\frac{(p-1)d}{2m_1}}
					+\frac{\frac{1}{m_2}-1+\frac{1}{r_1}}{1+\frac{(q-1)d}{2m_2}}\\
					&=\frac{\frac{p}{m_1}-\frac{1}{r_1}}{1+\frac{(p-1)d}{2m_1}}
					+\frac{\frac{1}{m_2}-1+\frac{1}{r_1}}{1+\frac{(p-1)d}{2m_1}}\\
					&=\frac{p+\frac{m_1}{m_2}-m_1}{p+\frac{2m_1}{d}-1}\cdot\frac{2}{d} <\frac{2}{d},
				\end{split}
			\end{equation*}
			and
			\begin{equation*}{\label{choice of p and q 9}}
				\begin{split}	
					\frac{\frac{1}{m_1}-\frac{1}{r_2}}{1-\frac{d}{2}+\frac{(p+m_1-1)d}{2m_1}}&+	 \frac{\frac{q}{m_2}-1+\frac{1}{r_2}}{1-\frac{d}{2}+\frac{(q
							+m_2-1)d}{2m_2}}\\
					&=\frac{\frac{q}{m_2}+\frac{1}{m_1}-1}{1+\frac{(p
							-1)d}{2m_1}}\\
					&=\frac{p+\frac{m_1}{m_2}-m_1}{p+\frac{2m_1}{d}-1}\cdot\frac{2}{d} <\frac{2}{d},
				\end{split}
			\end{equation*}
			which implies (\ref{choice of p and q 6})- (\ref{choice of p and q 7}).\\
			
			\noindent{\bf{Step 2.}}  \emph{Inequalities for both  $u$ and $w$.}
			For $p>1$ and $q>1$, we test $(\ref{TSTC approxiamation})_1$ by $u^{p-1}_\epsilon$ and integrate to find that
			\begin{equation*}
				\begin{split}
					\frac{1}{p}\frac{d}{dt}\int_{\mathbb{R}^d}u^p_{\epsilon} dx=&-(p-1)\int_{\mathbb{R}^d}u^{p-2}_{\epsilon}\nabla u_{\epsilon}\cdot\left(\nabla(u_{\epsilon}+\epsilon)^{m_1}-u_{\epsilon}\nabla v_{\epsilon}\right)dx\\
					\leq&-\frac{4m_1(p-1)}{(p+m_1-1)^2}\int_{\mathbb{R}^d}|\nabla u^{\frac{p+m_1-1}{2}}_{\epsilon}|^2dx-\frac{p-1}{p}\int_{\mathbb{R}^d}u^p_{\epsilon}\Delta v_{\epsilon}dx\\
					=&-\frac{4m_1(p-1)}{(p+m_1-1)^2}\int_{\mathbb{R}^d}|\nabla u^{\frac{p+m_1-1}{2}}_{\epsilon}|^2dx+\frac{p-1}{p}\int_{\mathbb{R}^d}u^{p}_{\epsilon}w_{\epsilon}dx
				\end{split}	
			\end{equation*}
			with $-\Delta v_{\epsilon}=w_{\epsilon}$, and similarly,
			\begin{equation*}
				\begin{split}
					 \frac{1}{q}\frac{d}{dt}\int_{\mathbb{R}^d}w^q_{\epsilon}dx\leq&-\frac{4m_2(q-1)}{(q+m_2-1)^2}\int_{\mathbb{R}^d}|\nabla w^{\frac{q+m_2-1}{2}}_{\epsilon}|^2dx+\frac{q-1}{q}\int_{\mathbb{R}^d}u_{\epsilon}w^{q}_{\epsilon}dx
				\end{split}
			\end{equation*}
			holds by multiplying $(\ref{TSTC approxiamation})_3$ by $w^{q-1}_\epsilon$ and $-\Delta z_{\epsilon}=u_{\epsilon}$. Then
			\be{\label{inequality for u^p and w^q}}
			\begin{split}
				\frac{1}{p}\frac{d}{dt}\int_{\mathbb{R}^d}u^p_{\epsilon} dx&+\frac{1}{q}\frac{d}{dt}\int_{\mathbb{R}^d}w^q_\epsilon dx+\frac{4m_1(p-1)}{(p+m_1-1)^2}\int_{\mathbb{R}^d}|\nabla u^{\frac{p+m_1-1}{2}}_\epsilon|^2dx\\
				&+\frac{4m_2(q-1)}{(q+m_2-1)^2}\int_{\mathbb{R}^d}|\nabla w^{\frac{q+m_2-1}{2}}_\epsilon|^2dx\\
				&\leq\frac{p-1}{p}\int_{\mathbb{R}^d}u^p_\epsilon w_\epsilon dx+\frac{q-1}{q}\int_{\mathbb{R}^d}u_\epsilon w^q_\epsilon dx,
			\end{split}
			\ee
			where
			\be{\label{right estimate for uw1}}
			\begin{split}
				\int_{\mathbb{R}^d}u^p_{\epsilon}w_{\epsilon}dx\leq \left(\int_{\mathbb{R}^d}u^{pr_1}_{\epsilon}dx\right)^{\frac{1}{r_1}}\left(\int_{\mathbb{R}^d}w^{r'_1}_{\epsilon}dx\right)^{\frac{1}{r'_1}}
			\end{split}
			\ee
			and
			\be{\label{right estimate for uw2}}
			\begin{split}
				\int_{\mathbb{R}^d}u_{\epsilon}w^q_{\epsilon}dx\leq \left(\int_{\mathbb{R}^d}u^{r_2}_{\epsilon}dx\right)^{\frac{1}{r_2}}\left(\int_{\mathbb{R}^d}w^{qr'_2}_{\epsilon}dx\right)^{\frac{1}{r'_2}}
			\end{split}
			\ee
			by H\"{o}lder's inequality with $r_1,r_2>1$, $r'_1=\frac{r_1}{r_1-1}$ and
			$r'_2=\frac{r_2}{r_2-1}$. We begin with estimating the right sides of ({\ref{right estimate for uw1}})-({\ref{right estimate for uw2}}) based on the choices of $p,q,r_1$ and $r_2$ in {\bf{Step 1}}. The assumption (\ref{choice of p and q 1}) ensures
			\be{\label{condition for u1}}
			\begin{split}
				pr_1>m_1,
			\end{split}
			\ee
			and
			\be{\label{condition for u2}}
			\begin{split}
				pr_1<\frac{(p+m_1-1)d}{d-2}
			\end{split}
			\ee
			by (\ref{choice of p and q 3}). Then by a variant of the Gagliardo-Nirenberg inequality (see {\cite[Lemma 6]{Sugiyama06-JDE}}),
			\be{\label{GN}}
			\|\varphi\|_{{k_2}}\leq C^{\frac{2}{r+m-1}}\|\varphi\|^{1-\sigma}_{{k_1}}\|\nabla \varphi^{\frac{r+m-1}{2}}\|^{\frac{2\sigma}{r+m-1}}_{2}
			\ee
			with $m\geq 1$, $k_1\in[1,r+m-1]$ and $1\leq k_1\leq k_2\leq \frac{(r+m-1)d}{d-2}$ with $d\geq 3$, $\sigma=\frac{r+m-1}{2}\left(\frac{1}{k_1}-\frac{1}{k_2}\right)\left(\frac{1}{d}-\frac{1}{2}+\frac{r+m-1}{2k_1}\right)^{-1}$,
			we pick $r=p, m=m_1$, $k_1=m_1$, $k_2=pr_1$ in (\ref{GN}) and use (\ref{condition for u1})-(\ref{condition for u2}) to find
			\begin{equation*}
				\begin{split}
					\left(\int_{\mathbb{R}^d}u^{pr_1}_{\epsilon}dx\right)^{\frac{1}{r_1}}=\|u_{\epsilon}\|^{p} _{pr_1}\leq C\|u_{\epsilon}\|^{p(1-\sigma)}_{m_1}\|\nabla u^{\frac{p+m_1-1}{2}}_{\epsilon}\|^{p\frac{2\sigma}{p+m_1-1}}_{2}
				\end{split}
			\end{equation*}
			with
			\begin{equation*}
				\begin{split}
					 \sigma=\frac{p+m_1-1}{2}\frac{\frac{1}{m_1}-\frac{1}{pr_1}}{\frac{1}{d}-\frac{1}{2}+\frac{p+m_1-1}{2m_1}}\in(0,1),
				\end{split}
			\end{equation*}
			where invoking (\ref{Asssumption on prior estimate for uwepsilon}) we further obtain
			\begin{equation*}
				\begin{split}
					\left(\int_{\mathbb{R}^d}u^{pr_1}_{\epsilon}dx\right)^{\frac{1}{r_1}}\leq C\|\nabla u^{\frac{p+m_1-1}{2}}_{\epsilon}\|^{\frac{\frac{p}{m_1}-\frac{1}{r_1}}{\frac{1}{d}-\frac{1}{2}+\frac{p+m_1-1}{2m_1}}}_{2}.
				\end{split}
			\end{equation*}
			Likewise, (\ref{choice of p and q 2})-(\ref{choice of p and q 3}) warrants that
			\begin{equation*}{\label{condition for u3}}
				\begin{split}
					m_2<r'_1<\frac{(q+m_2-1)d}{d-2},
				\end{split}
			\end{equation*}
			which allows one to make use of the Gagliardo-Nirenberg inequality and the upper bound for $\|w\|_{m_2}$ in (\ref{Asssumption on prior estimate for uwepsilon}) to estimate
			\begin{equation*}
				\begin{split}
					\left(\int_{\mathbb{R}^d}w^{r'_1}_{\epsilon}dx\right)^{\frac{1}{r'_1}}=\|w_{\epsilon}\| _{r'_1}\leq C\|\nabla w^{\frac{q+m_2-1}{2}}_{\epsilon}\|^{\frac{\frac{1}{m_2}-\frac{1}{r'_1}}{\frac{1}{d}-\frac{1}{2}+\frac{q+m_2-1}{2m_2}}}_{2}.
				\end{split}
			\end{equation*}
			Then
			\be{\label{inequality for u^p and w^p3}}
			\begin{split}
				 \left(\int_{\mathbb{R}^d}u^{pr_1}_{\epsilon}dx\right)^{\frac{1}{r_1}}&\left(\int_{\mathbb{R}^d}w^{r'_1}_{\epsilon}dx\right)^{\frac{1}{r'_1}}\\
				&\leq C\|\nabla u^{\frac{p+m_1-1}{2}}_{\epsilon}\|^{\frac{\frac{p}{m_1}-\frac{1}{r_1}}{\frac{1}{d}-\frac{1}{2}+\frac{p+m_1-1}{2m_1}}}_{2}\cdot\|\nabla w^{\frac{q+m_2-1}{2}}_{\epsilon}\|^{\frac{\frac{1}{m_2}-\frac{1}{r'_1}}{\frac{1}{d}-\frac{1}{2}+\frac{q+m_2-1}{2m_2}}}_{2}\\
				&= C\|\nabla u^{\frac{p+m_1-1}{2}}_{\epsilon}\|^{\frac{\frac{p}{m_1}-\frac{1}{r_1}}{\frac{1}{d}-\frac{1}{2}+\frac{p+m_1-1}{2m_1}}}_{2}\cdot\|\nabla w^{\frac{q+m_2-1}{2}}_{\epsilon}\|^{\frac{\frac{1}{m_2}-1+\frac{1}{r_1}}{\frac{1}{d}-\frac{1}{2}+\frac{q+m_2-1}{2m_2}}}_{2}.
			\end{split}
			\ee
			To estimate the right side of ({\ref{right estimate for uw2}}), we use (\ref{choice of p and q 5}) and (\ref{choice of p and q 4}) to obtain
			\begin{equation*}{\label{condition for u4}}
				\begin{split}
					m_1<r_2<\frac{(p+m_1-1)d}{d-2}.
				\end{split}
			\end{equation*}
			Then  the Gagliardo-Nirenberg inequality implies
			\begin{equation*}
				\begin{split}
					\left(\int_{\mathbb{R}^d}u^{r_2}_{\epsilon}dx\right)^{\frac{1}{r_2}}
					\leq& C\|\nabla u^{\frac{p+m_1-1}{2}}_{\epsilon}\|^{\frac{\frac{1}{m_1}-\frac{1}{r_2}}{\frac{1}{d}-\frac{1}{2}+\frac{p+m_1-1}{2m_1}}}_{2}
				\end{split}
			\end{equation*}
			by (\ref{Asssumption on prior estimate for uwepsilon}).
			We also obtain
			\begin{equation*}
				\begin{split}
					m_2<qr'_2<\frac{(q+m_2-1)d}{d-2}
				\end{split}
			\end{equation*}
			by (\ref{choice of p and q 5}) and (\ref{defintion of q}), and  choose $r=q, m=m_2$, $k_1=m_2$, $k_2=qr'_2$  in
			(\ref{GN})  to see that
			\begin{equation*}
				\begin{split}
					\left(\int_{\mathbb{R}^d}w^{qr'_2}_{\epsilon}dx\right)^{\frac{1}{r'_2}}=&\|w_{\epsilon}\|^{q} _{qr'_2}\leq C\|w_{\epsilon}\|^{q(1-\sigma)}_{m_2}\|\nabla w^{\frac{q+m_2-1}{2}}_{\epsilon}\|^{q\frac{2\sigma}{q+m_2-1}}_{2}\\
					\leq& C\|\nabla w^{\frac{q+m_2-1}{2}}_{\epsilon}\|^{\frac{\frac{q}{m_2}-\frac{1}{r'_2}}{\frac{1}{d}-\frac{1}{2}+\frac{q+m_2-1}{2m_2}}}_{2}\\
					=&C\|\nabla w^{\frac{q+m_2-1}{2}}_{\epsilon}\|^{\frac{\frac{q}{m_2}-1+\frac{1}{r_2}}{\frac{1}{d}-\frac{1}{2}+\frac{q+m_2-1}{2m_2}}}_{2}
				\end{split}
			\end{equation*}
			with
			\begin{equation*}
				\begin{split}
					 \sigma=\frac{q+m_2-1}{2}\frac{\frac{1}{m_2}-\frac{1}{qr'_2}}{\frac{1}{d}-\frac{1}{2}+\frac{q+m_2-1}{2m_2}}.
				\end{split}
			\end{equation*}
			Then
			\begin{equation*}
				\begin{split}
					 \left(\int_{\mathbb{R}^d}u^{r_2}_{\epsilon}dx\right)^{\frac{1}{r_2}}&\left(\int_{\mathbb{R}^d}w^{qr'_2}_{\epsilon}dx\right)^{\frac{1}{r'_2}}\\
					&\leq C\|\nabla u^{\frac{p+m_1-1}{2}}_{\epsilon}\|^{\frac{\frac{1}{m_1}-\frac{1}{r_2}}{\frac{1}{d}-\frac{1}{2}+\frac{p+m_1-1}{2m_1}}}_{2}\|\nabla w^{\frac{q+m_2-1}{2}}_{\epsilon}\|^{\frac{\frac{q}{m_2}-1+\frac{1}{r_2}}{\frac{1}{d}-\frac{1}{2}+\frac{q+m_2-1}{2m_2}}}_{2},
				\end{split}
			\end{equation*}
			which combines with  (\ref{inequality for u^p and w^q}) and (\ref{inequality for u^p and w^p3}) ensures that
			\be{\label{inequality for u^p and w^q2}}
			\begin{split}
				 \frac{1}{p}\frac{d}{dt}\int_{\mathbb{R}^d}u^p_{\epsilon}dx&+\frac{1}{q}\frac{d}{dt}\int_{\mathbb{R}^d}w^q_{\epsilon}dx+\frac{4m_1(p-1)}{(p+m_1-1)^2}\int_{\mathbb{R}^d}|\nabla u^{\frac{p+m_1-1}{2}}_{\epsilon}|^2dx\\
				&+\frac{4m_2(q-1)}{(q+m_2-1)^2}\int_{\mathbb{R}^d}|\nabla w^{\frac{q+m_2-1}{2}}_{\epsilon}|^2dx\\
				\leq& \frac{p-1}{p}\left(\int_{\mathbb{R}^d}u^{pr_1}_{\epsilon}dx\right)^{\frac{1}{r_1}}\left(\int_{\mathbb{R}^d}w^{r'_1}_{\epsilon}
				dx\right)^{\frac{1}{r'_1}}\\
				 &+\frac{q-1}{q}\left(\int_{\mathbb{R}^d}u^{r_2}_{\epsilon}dx\right)^{\frac{1}{r_2}}\left(\int_{\mathbb{R}^d}w^{qr'_2}_{\epsilon}dx\right)^{\frac{1}{r'_2}}\\
				\leq&C\|\nabla u^{\frac{p+m_1-1}{2}}_{\epsilon}\|^{\frac{\frac{p}{m_1}-\frac{1}{r_1}}{\frac{1}{d}-\frac{1}{2}+\frac{p+m_1-1}{2m_1}}}_{2}\cdot\|\nabla w^{\frac{q+m_2-1}{2}}_{\epsilon}\|^{\frac{\frac{1}{m_2}-1+\frac{1}{r_1}}{\frac{1}{d}-\frac{1}{2}+\frac{q+m_2-1}{2m_2}}}_{2}\\
				&+C\|\nabla u^{\frac{p+m_1-1}{2}}_{\epsilon}\|^{\frac{\frac{1}{m_1}-\frac{1}{r_2}}{\frac{1}{d}-\frac{1}{2}+\frac{p+m_1-1}{2m_1}}}_{2}\cdot\|\nabla w^{\frac{q+m_2-1}{2}}_{\epsilon}\|^{\frac{\frac{q}{m_2}-1+\frac{1}{r_2}}{\frac{1}{d}-\frac{1}{2}+\frac{q+m_2-1}{2m_2}}}_{2}.
			\end{split}
			\ee
			
			\noindent{\bf{Step 3.}} \emph{Boundedness for $u_{\epsilon}$ and $w_{\epsilon}$ in $L^p$- and $L^q$- spaces}. Let $\gamma_1>0,\gamma_2>0$ be such that $\gamma_1+\gamma_2<2$. For  $\epsilon>0$, a direct application of Young's inequality implies that
			\be\label{inequality for gamma1 and gamma2}
			\begin{split}
				\alpha^{\gamma_1}\beta^{\gamma_2}\leq \epsilon (\alpha^{2}+\beta^{2})+C.
			\end{split}
			\ee
			From {\bf{Step 1}}, there exist some $p>\bar{p}$ and $q>\bar{q}$ with some $\bar{p}>1$ and $\bar{q}>1$ such that
			\begin{equation*}
				\begin{split}
					 \frac{\frac{p}{m_1}-\frac{1}{r_1}}{\frac{1}{d}-\frac{1}{2}+\frac{p+m_1-1}{2m_1}}+\frac{\frac{1}{m_2}-1+\frac{1}{r_1}}{\frac{1}{d}-\frac{1}{2}+\frac{q+m_2-1}{2m_2}}<2
				\end{split}
			\end{equation*}
			and
			\begin{equation*}
				\begin{split}
					 \frac{\frac{1}{m_1}-\frac{1}{r_2}}{\frac{1}{d}-\frac{1}{2}+\frac{p+m_1-1}{2m_1}}+\frac{\frac{q}{m_2}-1+\frac{1}{r_2}}{\frac{1}{d}-\frac{1}{2}+\frac{q+m_2-1}{2m_2}}<2,
				\end{split}
			\end{equation*}
			where
			\begin{equation}{\label{inequality for u^p and w^q3}}
				\begin{split}
					 \frac{1}{p}\frac{d}{dt}\int_{\mathbb{R}^d}u^p_{\epsilon}dx&+\frac{1}{q}\frac{d}{dt}\int_{\mathbb{R}^d}w^q_{\epsilon}dx+\frac{2m_1(p-1)}{(p+m_1-1)^2}\int_{\mathbb{R}^d}|\nabla u^{\frac{p+m_1-1}{2}}_{\epsilon}|^2dx\\
					&+\frac{2m_2(q-1)}{(q+m_2-1)^2}\int_{\mathbb{R}^d}|\nabla w^{\frac{q+m_2-1}{2}}_{\epsilon}|^2dx\leq C
				\end{split}
			\end{equation}
			by (\ref{inequality for u^p and w^q2})-(\ref{inequality for gamma1 and gamma2}). One may invoke
			the Gagliardo-Nirenberg inequality with $\|u\|_{1}=M_1$ and $\|w\|_{1}=M_2$ and Young's inequality to obtain
			\begin{equation*}
				\begin{split}
					\frac{1}{p}\int_{\mathbb{R}^d}u^{p}_{\epsilon}dx=\frac{1}{p}\|u_{\epsilon}\|^{p} _{p}&\leq C\|\nabla u^{\frac{p+m_1-1}{2}}_{\epsilon}\|^{\frac{p-1}{\frac{1}{d}-\frac{1}{2}+\frac{p+m_1-1}{2}}}_{2}\\
					&\leq \frac{2m_1(p-1)}{(p+m_1-1)^2}\int_{\mathbb{R}^d}|\nabla u^{\frac{p+m_1-1}{2}}_{\epsilon}|^2dx+C
				\end{split}
			\end{equation*}
			and
			\begin{equation*}
				\begin{split}
					 \frac{1}{q}\int_{\mathbb{R}^d}w^{q}_{\epsilon}dx\leq\frac{2m_2(q-1)}{(q+m_2-1)^2}\int_{\mathbb{R}^d}|\nabla w^{\frac{q+m_2-1}{2}}_{\epsilon}|^2dx+C
				\end{split}
			\end{equation*}
			by the fact that
			\begin{equation*}
				\begin{split}
					 \frac{p-1}{\frac{1}{d}-\frac{1}{2}+\frac{p+m_1-1}{2}}<2\,\,\,\text{and}\,\,\,\,\frac{q-1}{\frac{1}{d}-\frac{1}{2}+\frac{q+m_2-1}{2}}<2.
				\end{split}
			\end{equation*}
			Writing $y(t)=\frac{1}{p}\int_{\mathbb{R}^d}u^{p}_{\epsilon}dx+\frac{1}{q}\int_{\mathbb{R}^d}w^{q}_{\epsilon}dx$, we obtain from (\ref{inequality for u^p and w^q3}) that
			\begin{equation*}
				\begin{split}
					y'(t)+y(t)\leq C\,\,\,\,\text{for}\,\,\,\,t\in(0,T).
				\end{split}
			\end{equation*}
			Then
			\begin{equation*}{\label{inequality for u^p and w^q4}}
				\begin{split}
					\|u_{\epsilon}(t)\|_p\leq C\,\,\,\,\text{and}\,\,\,\,\|w_{\epsilon}(t)\|_q\leq C\,\,\,\,\text{for}\,\,\,\,t\in(0,T),
				\end{split}
			\end{equation*}
			which implies that (\ref{prior estimate for uwepsilon2}) holds.\\
			\noindent{\bf{Step 4.}}  \emph{Improve the regularities of $v$ and $z$.} As
			\begin{equation*}
				\begin{split}
					v_{\epsilon}=\mathcal{K}\ast w_{\epsilon}=c_d\int_{\mathbb{R}^d}\frac{w_{\epsilon}(y)}{|x-y|^{d-2}}dy,\,\,z_{\epsilon}=c_d\int_{\mathbb{R}^d}\frac{u_{\epsilon}(y)}{|x-y|^{d-2}}dy,
				\end{split}
			\end{equation*}
			an application of the HLS inequality ensures  that
			\be\label{gradient for vepislon}
			\begin{split}
				\||\nabla v_{\epsilon}|\|_r\leq&c_d(d-2)\left\|I_1(w_{\epsilon})\right\|_r
				\leq C\left\|w_{\epsilon}\right\|_{dr/(d+r)},\\
				\||\nabla z_{\epsilon}|\|_{r}\leq& C\left\|u_{\epsilon}\right\|_{dr/(d+r)}.
			\end{split}
			\ee
			Furthermore, observing that the Calderon-Zygmund inequality yields the existence of a constant $C=C(r)>0$ such that
			\begin{equation*}
				\begin{split}
					\|\partial_{x_i}\partial_{x_j} v_{\epsilon}\|_{r}\leq& C\left\|w_{\epsilon}\right\|_{r},\\
					\|\partial_{x_i}\partial_{x_j} z_{\epsilon}\|_{r}\leq& C\left\|u_{\epsilon}\right\|_{r},\,\,\,(1\leq i,j\leq d),
				\end{split}
			\end{equation*}
			we combine (\ref{prior estimate for uwepsilon2}), (\ref{gradient for vepislon}) with the Morrey's inequality to see that
			\begin{equation*}
				\begin{split}
					\|(v_{\epsilon}(t),z_{\epsilon}(t))\|_{r}+\|(\nabla v_{\epsilon}(t),\nabla z_{\epsilon}(t))\|_{r}\leq  C\,\,\,\text{for}\,\,\,r\in[1,\infty]\,\,\,\text{and}\,\,\,t\in(0,T).	
				\end{split}
			\end{equation*}
			Thus we finish our proof.
		\end{proof}
		
		Upon the boundedness arguments in Lemma {\ref{the condition for global existence}}, we obtain a global weak solution by letting a subsequence of $\epsilon$ approaches to 0.

		\begin{lemma} \label{global weak solution exist}
			Under the same assumption in Lemma \ref{the condition for global existence}, there exists $C>0$ independent of $\epsilon$ such that the strong solution $(u_{\epsilon},w_{\epsilon})$ of (\ref{TSTC approxiamation}) satisfies
			\be\label{Linfty estimate for uwepsilon}
			\|(u_{\epsilon}(t),w_{\epsilon}(t))\|_{\infty}\leq  C\,\,\,\,\text{for all}\,\,\,\,t\in(0,T).
			\ee
			Moreover, there exists a global weak solution  $(u,w)$ of (\ref{TSTC}) which also satisfies  a uniform estimate.
		\end{lemma}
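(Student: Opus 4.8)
The plan is to split the proof into two independent tasks: (a) upgrading the finite $L^r$-bounds of Lemma \ref{the condition for global existence} to a uniform $L^\infty$-bound, which simultaneously yields the global existence of the approximate solutions, and (b) extracting a weak solution of (\ref{TSTC}) as $\epsilon\to 0$ by compactness. For (a) I would run a Moser--Alikakos iteration in the spirit of \cite{TaoWinkler12-JDE}. Since, under the standing hypotheses, (\ref{prior estimate for uwepsilon2})--(\ref{3.1}) already provide $\|(u_\epsilon(t),w_\epsilon(t))\|_r\le C$ for \emph{every} finite $r$ and $\|\nabla v_\epsilon\|_\infty,\|\nabla z_\epsilon\|_\infty\le C$ uniformly in $\epsilon$, testing the first equation of (\ref{TSTC approxiamation}) by $u_\epsilon^{p-1}$ gives, exactly as in Step 2 of the previous proof,
\[\frac1p\frac{d}{dt}\int_{\mathbb{R}^d} u_\epsilon^p+\frac{4m_1(p-1)}{(p+m_1-1)^2}\int_{\mathbb{R}^d}|\nabla u_\epsilon^{(p+m_1-1)/2}|^2\le \frac{p-1}{p}\int_{\mathbb{R}^d} u_\epsilon^{p}w_\epsilon.\]
Fixing one large exponent $s$ and bounding the right-hand side by $\|w_\epsilon\|_s\,\|u_\epsilon\|_{ps'}^{p}$, then interpolating $\|u_\epsilon\|_{ps'}$ by the Gagliardo--Nirenberg inequality (\ref{GN}) between a fixed lower norm and $\|\nabla u_\epsilon^{(p+m_1-1)/2}\|_2$, the gradient factor can be absorbed into the diffusion for all large $p$; note that here the extra condition (\ref{connection between m_1 and m_2}) is not needed, precisely because $w_\epsilon$ is now controlled in every $L^s$. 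Running the resulting recursion along $p=p_k\to\infty$ produces sub-geometric constants, so $\limsup_k \sup_t\|u_\epsilon(t)\|_{p_k}<\infty$ and thus $\|u_\epsilon\|_\infty\le C$ uniformly in $\epsilon$ (identically for $w_\epsilon$). As this bound is uniform on $(0,T)$ for every $T\le T^\epsilon_{\max}$, the extensibility criterion of Lemma \ref{Local existence for AS} forces $T^\epsilon_{\max}=\infty$, giving (\ref{Linfty estimate for uwepsilon}) and the global existence of the approximate strong solutions.

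Before passing to the limit I would record the further uniform bounds needed for compactness. Choosing $p=m_1+1$ and $q=m_2+1$ in the differential inequality (\ref{inequality for u^p and w^q}) (so that $(p+m_1-1)/2=m_1$, $(q+m_2-1)/2=m_2$) and integrating in time yields
\[\int_0^T\!\!\int_{\mathbb{R}^d}|\nabla u_\epsilon^{m_1}|^2+\int_0^T\!\!\int_{\mathbb{R}^d}|\nabla w_\epsilon^{m_2}|^2\le C\]
uniformly in $\epsilon$, which is exactly the regularity $(u^{m_1},w^{m_2})\in (L^2(0,T;H^1))^2$ demanded by Definition \ref{weak solution}. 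Testing the two evolution equations against $H^1$ test functions and using the uniform $L^\infty$-bounds on $u_\epsilon,w_\epsilon,\nabla v_\epsilon,\nabla z_\epsilon$ then shows that $\partial_t u_\epsilon$ and $\partial_t w_\epsilon$ are bounded in $L^2(0,T;H^{-1}(\mathbb{R}^d))$, again uniformly in $\epsilon$.

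For the limit passage, the genuine difficulty is that the diffusion is degenerate, so I control $\nabla u_\epsilon^{m_1}$ but have no direct bound on $\nabla u_\epsilon$ itself. I would therefore obtain strong convergence by an Aubin--Lions-type argument adapted to the porous-medium structure: the uniform $L^\infty$-bound, the $L^2$-bound on $\nabla u_\epsilon^{m_1}$, and the time-equicontinuity coming from $\partial_t u_\epsilon\in L^2(0,T;H^{-1})$ together give relative compactness of $\{u_\epsilon\}$ in $L^p(Q_T)$ for all finite $p$, hence $u_\epsilon\to u$ strongly and a.e.\ along a subsequence (and likewise $w_\epsilon\to w$). Along this subsequence $u_\epsilon^{m_1}\to u^{m_1}$ strongly in $L^2$ and $\nabla u_\epsilon^{m_1}\rightharpoonup \nabla u^{m_1}$ weakly in $L^2$, while $\nabla v_\epsilon=(\nabla\mathcal{K})\ast w_\epsilon\to (\nabla\mathcal{K})\ast w=\nabla v$ by continuity of the Riesz potential (Lemma \ref{HLS inequality}), so the drift $u_\epsilon\nabla v_\epsilon\to u\nabla v$ strongly in $L^2(Q_T)$, the $L^\infty$-bounds supplying the uniform integrability, and $(u_\epsilon+\epsilon)^{m_1}-u_\epsilon^{m_1}\to 0$. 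Passing to the limit in the weak formulation of (\ref{TSTC approxiamation}) then identifies $(u,w)$ as a weak solution in the sense of Definition \ref{weak solution}, and the uniform bounds are inherited by $(u,w)$ via weak and weak-$*$ lower semicontinuity. I expect the strong-compactness step for the degenerate system, i.e.\ identifying the nonlinear diffusion and drift limits without any control on $\nabla u_\epsilon$, to be the main obstacle, whereas the Moser iteration producing the $L^\infty$-bound is comparatively routine once all finite $L^r$-bounds are in hand.
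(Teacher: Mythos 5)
Your proposal follows essentially the same route as the paper: Moser iteration on top of the $L^r$-bounds of Lemma \ref{the condition for global existence} to get the uniform $L^\infty$-estimate, global extension via the criterion of Lemma \ref{Local existence for AS}, and then an Aubin--Lions-type compactness argument to pass to the limit $\epsilon\to 0$ (the paper simply outsources these two steps to \cite{Sugiyama06-JDE} and \cite{Sugiyama07-ADE} rather than writing them out). The only caveat is that on the whole space the compactness you obtain is in $L^p_{loc}(Q_T)$ rather than $L^p(Q_T)$ (as in the paper's list of convergences), which is still sufficient to pass to the limit against compactly supported test functions.
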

		\begin{proof}
			Relying on Lemma \ref{the condition for global existence}, we apply the Moser's iteration technique to obtain a priori estimate of solution in $L^{\infty}$. Then  this local solution can be extended globally in time from the extensibility criterion in Lemma {\ref{Local existence for AS}}, which indeed establishes (\ref{Linfty estimate for uwepsilon}), see {\cite[Proposition 10]{Sugiyama06-JDE}}. Moreover, from (\ref{Linfty estimate for uwepsilon}) there exists $(u,v,w,z)$ with the regularities given in Definition \ref{weak solution} such that, up to a subsequence, $\epsilon_{n}\rightarrow 0$,
			\begin{align*}
				u_{\epsilon_{n}}\rightarrow& u\,\,\text{strongly in}\,\,C([0,T);L^{p}_{loc}(\mathbb{R}^d))\,\,\text{and a.e. in}\,\,\mathbb{R}^d\times (0,T),\\
				\nabla u^{m_1}_{\epsilon_{n}}\rightharpoonup&\nabla u^{m_1}\,\,\text{weakly-}\ast\,\,{in}\,\,L^\infty((0,T); L^2(\mathbb{R}^d)),\\
				w_{\epsilon_{n}}\rightarrow& w\,\,\text{strongly in}\,\,C([0,T);L^{p}_{loc}(\mathbb{R}^d))\,\,\text{and a.e. in}\,\,\mathbb{R}^d\times (0,T),\\
				\nabla w^{m_2}_{\epsilon_{n}}\rightharpoonup&\nabla w^{m_2}\,\,\text{weakly-}\ast\,\,{in}\,\,L^\infty((0,T); L^2(\mathbb{R}^d)),\\
				v_{\epsilon_{n}}(t)\rightarrow& v(t)\,\,\text{strongly in}\,\,L^{r}_{loc}(\mathbb{R}^d)\,\,\text{and a.e. in}\,\,(0,T),\\
				\nabla v_{\epsilon_{n}}(t)\rightarrow&\nabla v(t)\,\,\text{strongly in}\,\, L^r_{loc}(\mathbb{R}^d)\,\,\text{and a.e. in}\,\,(0,T),\\
				\Delta 	v_{\epsilon_{n}}(t)\rightharpoonup&\Delta v(t)\,\,\text{weakly in}\,\,L^{r}_{loc}(\mathbb{R}^d)\,\,\text{and a.e. in}\,\,(0,T),\\
				z_{\epsilon_{n}}(t)\rightarrow& z(t)\,\,\text{strongly in}\,\,L^{r}_{loc}(\mathbb{R}^d)\,\,\text{and a.e. in}\,\,(0,T),\\
				\nabla z_{\epsilon_{n}}(t)\rightarrow&\nabla z(t)\,\,\text{strongly in}\,\, L^r_{loc}(\mathbb{R}^d)\,\,\text{and a.e. in}\,\,(0,T),\\
				\Delta 	z_{\epsilon_{n}}(t)\rightharpoonup&\Delta z(t)\,\,\text{weakly in}\,\,L^{r}_{loc}(\mathbb{R}^d)\,\,\text{and a.e. in}\,\,(0,T),
			\end{align*}
			where $p\in(1,\infty)$, $r\in(1,\infty]$  and $T\in (0,\infty)$. Since the above convergence can be calculated in {\cite[Section 4]{Sugiyama07-ADE}}, we omit the main proof here. Therefore, we have a global weak solution $(u,v,w,z)$ over $\mathbb{R}^d\times(0,T)$ with $T>0$.

		\end{proof}
		
		The weak solution obtained in Lemma \ref{global weak solution exist} is also a free energy solution given in Definition {\ref{free energy solution}}. The proof comes from \cite{Sugiyama06-DIE}.
		\begin{lemma}  \label{proof of FES} Consider a global weak solution in Lemma \ref{global weak solution exist}, then it is also a global free energy solution $(u,w)$ of (\ref{TSTC}) given in Definition {\ref{free energy solution}}.
		\end{lemma}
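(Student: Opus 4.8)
The plan is to inherit the energy dissipation equality enjoyed by the smooth approximate solutions $(u_\epsilon,w_\epsilon)$ of (\ref{TSTC approxiamation}) and to transfer it to the limit $(u,w)$ built in Lemma \ref{global weak solution exist} by a weak lower semicontinuity argument. This produces at once the missing regularity $(u^{(2m_1-1)/2},w^{(2m_2-1)/2})\in (L^2(0,T;H^1(\mathbb{R}^d)))^2$ and the dissipation inequality (\ref{inequality for free energy2}), so that the weak solution qualifies as a free energy solution in the sense of Definition \ref{free energy solution}.

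\textbf{Step 1 (energy identity for the approximation).} Since Lemma \ref{Local existence for AS} furnishes $\mathbb{W}^{2,1}_p$ regularity, one may legitimately test $(\ref{TSTC approxiamation})_1$ with $\mu^{(1)}_\epsilon:=\frac{m_1}{m_1-1}(u_\epsilon+\epsilon)^{m_1-1}-v_\epsilon$ and $(\ref{TSTC approxiamation})_3$ with $\mu^{(2)}_\epsilon:=\frac{m_2}{m_2-1}(w_\epsilon+\epsilon)^{m_2-1}-z_\epsilon$. Using $-\Delta v_\epsilon=w_\epsilon$, $-\Delta z_\epsilon=u_\epsilon$ and the symmetry of the kernel $\mathcal{K}$, the two drift cross-terms assemble into $c_d\frac{d}{dt}\mathcal{H}[u_\epsilon,w_\epsilon]$; integrating in time I obtain a regularized identity of the shape $\mathcal{F}_\epsilon[u_\epsilon(t),w_\epsilon(t)]+\int_0^t(D_1^\epsilon+D_2^\epsilon)\,ds=\mathcal{F}_\epsilon[u_0^\epsilon,w_0^\epsilon]+R_\epsilon$, where $\mathcal{F}_\epsilon$ carries the diffusion potentials $\frac{1}{m_1-1}\int(u_\epsilon+\epsilon)^{m_1}dx$ and $\frac{1}{m_2-1}\int(w_\epsilon+\epsilon)^{m_2}dx$, and $R_\epsilon$ collects the $O(\epsilon)$ discrepancy between the true flux $-\nabla(u_\epsilon+\epsilon)^{m_1}+u_\epsilon\nabla v_\epsilon$ and $u_\epsilon\nabla\mu^{(1)}_\epsilon$. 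The algebraic identity $u_\epsilon^{1/2}\nabla u_\epsilon^{m_1-1}=\frac{2(m_1-1)}{2m_1-1}\nabla u_\epsilon^{(2m_1-1)/2}$ (and its $w$-analogue) rewrites the diffusive part of $D_1^\epsilon$ as a multiple of $\|\nabla u_\epsilon^{(2m_1-1)/2}\|_2^2$ up to vanishing $\epsilon$-terms. The uniform bounds of Lemmas \ref{the condition for global existence} and \ref{global weak solution exist} keep the right-hand side bounded uniformly in $\epsilon$ and force $R_\epsilon\to 0$.

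\textbf{Step 2 (uniform bounds, regularity and limit passage).} The uniform bound on the right-hand side delivers, through the identity, a uniform $L^2(\mathbb{R}^d\times(0,T))$ bound on $\nabla u_\epsilon^{(2m_1-1)/2}$ and $\nabla w_\epsilon^{(2m_2-1)/2}$. Together with the a.e.\ convergence $u_\epsilon\to u$, $w_\epsilon\to w$ from Lemma \ref{global weak solution exist} and weak compactness in $L^2$, this gives $u^{(2m_1-1)/2},w^{(2m_2-1)/2}\in L^2(0,T;H^1(\mathbb{R}^d))$ and the weak convergences $\nabla u_\epsilon^{(2m_1-1)/2}\rightharpoonup\nabla u^{(2m_1-1)/2}$, $\nabla w_\epsilon^{(2m_2-1)/2}\rightharpoonup\nabla w^{(2m_2-1)/2}$, settling the regularity part of Definition \ref{free energy solution}. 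To pass to the limit in $D_1^\epsilon=\int_{\mathbb{R}^d}\bigl|\frac{2m_1}{2m_1-1}\nabla u_\epsilon^{(2m_1-1)/2}-u_\epsilon^{1/2}\nabla v_\epsilon\bigr|^2dx$ (modulo vanishing $\epsilon$-terms), I note that $u_\epsilon^{1/2}\nabla v_\epsilon\to u^{1/2}\nabla v$ strongly in $L^2$ by the strong convergences of $u_\epsilon$ and $\nabla v_\epsilon$ recorded in Lemma \ref{global weak solution exist} and the uniform bound (\ref{Linfty estimate for uwepsilon}); the pure square and the cross term are then handled, respectively, by weak lower semicontinuity of the $L^2$-norm and by weak–strong convergence, yielding $\int_0^t(D_1+D_2)\le\liminf_\epsilon\int_0^t(D_1^\epsilon+D_2^\epsilon)$. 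Finally $\mathcal{F}_\epsilon[u_\epsilon(t),w_\epsilon(t)]\to\mathcal{F}[u(t),w(t)]$ and $\mathcal{F}_\epsilon[u_0^\epsilon,w_0^\epsilon]\to\mathcal{F}[u_0,w_0]$, using strong $L^{m_1}$, $L^{m_2}$ convergence (from a.e.\ convergence, the $L^\infty$ bound and the tightness afforded by the bounded second moments in (\ref{intial data for u and w})) for the diffusive terms and the continuity of $\mathcal{H}$ via Lemma \ref{HLS inequality} for the interaction term. Taking $\liminf$ in the regularized identity then produces exactly (\ref{inequality for free energy2}).

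\textbf{Main obstacle.} The conceptual crux is the lower semicontinuity of the entropy-production part of the dissipation: the drift and cross contributions converge by weak–strong pairing, but $\|\nabla u_\epsilon^{(2m_1-1)/2}\|_2^2$ can only be estimated from below, which is precisely why one obtains the inequality rather than an equality in (\ref{inequality for free energy2}). The more delicate technical point, where the regularization is genuinely used, is the rigorous justification of the Step 1 identity for the $\mathbb{W}^{2,1}_p$ solution on the whole space: controlling the $O(\epsilon)$ flux discrepancy hidden in $R_\epsilon$ and the decay at infinity, for which the bounded second moments in (\ref{intial data for u and w}) supply the necessary tightness and integrability.
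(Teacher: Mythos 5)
Your overall strategy (regularized energy identity for the approximate system, uniform dissipation bounds, weak lower semicontinuity to pass to the limit and obtain the inequality) is the same skeleton as the paper's, and your Step 2 — the weak--strong pairing for the cross terms, lower semicontinuity for $\|\nabla u_\epsilon^{(2m_1-1)/2}\|_2^2$, and convergence of $\mathcal{F}_\epsilon$ to $\mathcal{F}$ — matches what the paper does at the end. But there is a genuine gap in Step 1, and it is exactly the point you defer to the ``main obstacle'' paragraph: with the regularization $\Delta(u_\epsilon+\epsilon)^{m_1}$, the quantity $\frac{1}{m_1-1}\int_{\mathbb{R}^d}(u_\epsilon+\epsilon)^{m_1}\,dx$ is $+\infty$ (the integrand is bounded below by $\epsilon^{m_1}>0$ on all of $\mathbb{R}^d$), and the test function $\mu^{(1)}_\epsilon=\frac{m_1}{m_1-1}(u_\epsilon+\epsilon)^{m_1-1}-v_\epsilon$ tends to the nonzero constant $\frac{m_1}{m_1-1}\epsilon^{m_1-1}$ at infinity. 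So the ``regularized identity'' you write down involves divergent terms, and the global integration by parts producing it cannot be performed; the error is not a harmless $R_\epsilon=O(\epsilon)$ that the initial second-moment bound controls (that bound concerns $\int|x|^2u_0$, not the non-integrable tail of $(u_\epsilon+\epsilon)^{m_1}$, and its propagation in time is not available at this stage of the paper).

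This is precisely why the paper introduces the cutoff $\psi_l(x)=\psi(|x|/l)$ and tests with $\frac{m_1}{m_1-1}(u_\epsilon+\epsilon)^{m_1-1}\psi_l-v_\epsilon$: the weight renders the localized entropy $\int(u_\epsilon+\epsilon)^{m_1}\psi_l\,dx$ finite and the integration by parts legitimate, at the cost of a collection of commutator and localization errors involving $\nabla\psi_l$, $\Delta\psi_l$ and $1-\psi_l$, each of which must be estimated (using $|\nabla\psi_l|\le Cl^{-1}\psi_l^{1/2}$, the uniform bounds of Lemmas \ref{the condition for global existence}--\ref{global weak solution exist}, and a Young-inequality absorption with a parameter $\eta$) before taking the limits in the order $\epsilon\to0$, then $l\to\infty$, then $\eta\to0$. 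These estimates are the substance of the lemma, not a technicality, and your proposal does not supply a substitute for them. To repair your argument you would either have to reproduce this localization scheme, or reformulate the regularized entropy as $\int[(u_\epsilon+\epsilon)^{m_1}-\epsilon^{m_1}]\,dx$ together with a test function that genuinely decays, and then still justify the vanishing of the boundary contributions — none of which is done in the proposal as written.
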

		\begin{proof}
			Define a weight function
			\begin{equation*}\label{weight function}
				\begin{split}
					\psi (|x|)=\left\{
					\begin{array}{llll}
						1,\,\,&\text{for}\,\,0\leq |x|\leq 1,\\
					1-2(|x|-1)^2,\,\,&\text{for}\,\,1< |x|\leq \frac{3}{2},\\
					2(2-|x|)^2,\,\,&\text{for}\,\,\frac{3}{2}< |x|<2,\\
					0,\,\,&\text{for}\,\,|x|\geq2,
					\end{array}
					\right.
				\end{split}
			\end{equation*}
			and define $\psi_l$ by $\psi_{l}(x):=\psi\left(\frac{|x|}{l}\right)$ for any $x\in\mathbb{R}^d$ and $l=1,2,3,\cdots. $ 	Evidently,
			\begin{equation*}\label{property for weight function}
				\begin{split}
					|\nabla \psi_{l}(x)|\leq \frac{C}{l}(\psi_l(x))^{\frac{1}{2}}\,\,\,\text{and}\,\,
					|\Delta \psi_{l}(x)| \leq \frac{C}{l^2}
				\end{split}
			\end{equation*}
			is valid with some $C>0$. Denote
			\begin{equation*}
				\begin{split}
					\mathcal{F}[u_{\epsilon}(t),w_{\epsilon}(t)]:=&\frac{1}{m_1-1}\int_ {\mathbb{R}^d}(u_{\epsilon}+\epsilon)^{m_1}\psi_{l}(x)dx+\frac{1}{m_2-1}\int_ {\mathbb{R}^d}(w_{\epsilon}+\epsilon)^{m_2}\psi_{l}(x)dx\\
					&-\int_ {\mathbb{R}^d}u_{\epsilon}v_{\epsilon}dx\\
					=&\frac{1}{m_1-1}\int_ {\mathbb{R}^d}(u_{\epsilon}+\epsilon)^{m_1}\psi_{l}(x)dx-\int_ {\mathbb{R}^d}u_{\epsilon}v_{\epsilon}dx\\	
					&+\frac{1}{m_2-1}\int_ {\mathbb{R}^d}(w_{\epsilon}+\epsilon)^{m_2}\psi_{l}(x)dx-\int_ {\mathbb{R}^d}w_{\epsilon}z_{\epsilon}dx\\
					&+\int_ {\mathbb{R}^d}\nabla v_{\epsilon}\cdot \nabla z_{\epsilon}dx.
				\end{split}
			\end{equation*}
			Since
			\begin{equation*}
				\begin{split}
					 \frac{1}{m_1-1}&\frac{d}{dt}(u_{\epsilon}+\epsilon)^{m_1}\psi_{l}-\frac{d}{dt}(u_{\epsilon}v_{\epsilon})+u_{\epsilon}v_{\epsilon t}\\
					&=\nabla\cdot \left(\nabla(u_{\epsilon}+\epsilon)^{m_1}-u_{\epsilon}\nabla v_{\epsilon}\right)\cdot\left(\frac{m_1(u_{\epsilon}+\epsilon)^{m_1-1}}{m_1-1}\psi_{l}-v_{\epsilon}\right),\\
					 \frac{1}{m_2-1}&\frac{d}{dt}(w_{\epsilon}+\epsilon)^{m_2}\psi_{l}-\frac{d}{dt}(w_{\epsilon}z_{\epsilon})+w_{\epsilon}z_{\epsilon t}\\
					&=\nabla\cdot \left(\nabla(w_{\epsilon}+\epsilon)^{m_2}-w_{\epsilon}\nabla z_{\epsilon}\right)\cdot\left(\frac{m_2(w_{\epsilon}+\epsilon)^{m_2-1}}{m_2-1}\psi_{l}-z_{\epsilon}\right)
				\end{split}
			\end{equation*}
			by testing $(\ref{TSTC approxiamation})_1$ by $\frac{m_1(u_{\epsilon}+\epsilon)^{m_1-1}}{m_1-1}\psi_{l}-v_{\epsilon}$ and $(\ref{TSTC approxiamation})_3$ by $\frac{m_2(w_{\epsilon}+\epsilon)^{m_2-1}}{m_2-1}\psi_{l}-z_{\epsilon}$, then the derivative of $	 \mathcal{F}[u_{\epsilon}(t),w_{\epsilon}(t)]$ with respect to time is
			\begin{equation*}\label{}
				\begin{split}
					\frac{d}{dt}\mathcal{F}[u_{\epsilon}(t),w_{\epsilon}(t)]=&\frac{1}{m_1-1}\frac{d}{dt}\int_ {\mathbb{R}^d}(u_{\epsilon}+\epsilon)^{m_1}\psi_{l}(x)dx-\frac{d}{dt}\int_ {\mathbb{R}^d}u_{\epsilon}v_{\epsilon}dx\\
					&+\int_ {\mathbb{R}^d}u_{\epsilon}v_{\epsilon t}dx+\frac{1}{m_2-1}\frac{d}{dt}\int_ {\mathbb{R}^d}(w_{\epsilon}+\epsilon)^{m_2}\psi_{l}(x)dx
					\\
					&-\frac{d}{dt}\int_ {\mathbb{R}^d}w_{\epsilon}z_{\epsilon}dx+\int_ {\mathbb{R}^d}w_{\epsilon}z_{\epsilon t}dx\\
					=&-\int_ {\mathbb{R}^d} \left(\nabla(u_{\epsilon}+\epsilon)^{m_1}-u_{\epsilon}\nabla v_{\epsilon}\right)\cdot\nabla\left(\frac{m_1(u_{\epsilon}+\epsilon)^{m_1-1}}{m_1-1}\psi_{l}-v_{\epsilon}\right)dx\\
					&-\int_ {\mathbb{R}^d}\left(\nabla(w_{\epsilon}+\epsilon)^{m_2}-w_{\epsilon}\nabla z_{\epsilon}\right)\cdot\nabla \left(\frac{m_2(w_{\epsilon}+\epsilon)^{m_2-1}}{m_2-1}\psi_{l}-z_{\epsilon}\right)dx,
				\end{split}
			\end{equation*}
			which can be written as
			\begin{equation}\label{derivative of F}
				\begin{split}	
					 \frac{d}{dt}\mathcal{F}[u_{\epsilon}(t),w_{\epsilon}(t)]=&-\int_{\mathbb{R}^d}\left[(u_\epsilon+\epsilon)\nabla\left(\frac{m_1}{m_1-1}(u_\epsilon+\epsilon)^{m_1-1}-v_\epsilon\right)+\epsilon\nabla v_\epsilon\right]\\
					&\cdot\bigg[\nabla\left(\frac{m_1}{m_1-1}(u_\epsilon+\epsilon)^{m_1-1}-v_\epsilon\right)\psi_l\\
					&+\left(\frac{m_1}{m_1-1}(u_\epsilon+\epsilon)^{m_1-1}-v_\epsilon\right)\nabla \psi_l+\nabla v_\epsilon(\psi_l-1)+v_\epsilon\nabla \psi_l\bigg]dx\\
					 &-\int_{\mathbb{R}^d}\left[(w_\epsilon+\epsilon)\nabla\left(\frac{m_2}{m_2-1}(w_\epsilon+\epsilon)^{m_2-1}-z_\epsilon\right)+\epsilon\nabla z_\epsilon\right]\\
					&\cdot\bigg[\nabla\left(\frac{m_2}{m_2-1}(w_\epsilon+\epsilon)^{m_2-1}-z_\epsilon\right)\psi_l\\
					&+\left(\frac{m_2}{m_2-1}(w_\epsilon+\epsilon)^{m_2-1}-z_\epsilon\right)\nabla \psi_l+\nabla z_\epsilon(\psi_l-1)+z_\epsilon\nabla \psi_l\bigg]dx\\
					=& -\int_{\mathbb{R}^d}I_1\times J_1dx-\int_{\mathbb{R}^d}I_2\times J_2dx.
				\end{split}
			\end{equation}
			With $U_\epsilon:=\frac{m_1}{m_1-1}(u_\epsilon+\epsilon)^{m_1-1}-v_\epsilon$,
			we expand the term $-\int_{\mathbb{R}^d}I_1\times J_1dx$ to find that
			\begin{equation*}\label{3.22}
				\begin{split}
					-\int_{\mathbb{R}^d}I_1\times J_1dx=&-\int_{\mathbb{R}^d}(u_\epsilon+\epsilon)\psi_l|\nabla U_\epsilon|^2dx-\int_{\mathbb{R}^d}(u_\epsilon+\epsilon)(U_\epsilon+v_\epsilon)\nabla U_\epsilon\cdot\nabla \psi_ldx\\
					&-\int_{\mathbb{R}^d}(u_\epsilon+\epsilon)(\psi_l-1)\nabla U_\epsilon\cdot\nabla v_\epsilon dx-\epsilon\int_{\mathbb{R}^d}\nabla(\psi_l U_\epsilon)\cdot \nabla v_{\epsilon}dx\\
					&-\epsilon\int_{\mathbb{R}^d}\nabla (v_{\epsilon}(\psi_l-1))\cdot \nabla v_{\epsilon}dx,
				\end{split}
			\end{equation*}
			where  by defining  $\Omega_{l}=\{x\in \mathbb{R}^d:l<|x|<2l\}$, upon  using Young's inequality, H\"{o}lder's inequality and $(a+b)^m\leq 2^m(a^m+b^m)$ with $a,b>0$ and $m>1$, with any $\eta\in(0,1)$ we deduce  from $|\nabla \psi_l|\leq \frac{C}{l}\left(\psi_l\right)^{\frac{1}{2}}$  and supp$|\nabla \psi_l|=\overline{\Omega}_l$ that
			\begin{equation*}\label{3.23}
				\begin{split}
					-\int_{\mathbb{R}^d}(u_\epsilon+\epsilon)(U_\epsilon+v_\epsilon)\nabla U_\epsilon\cdot\nabla \psi_ldx\leq& \eta\int_{\mathbb{R}^d}(u_\epsilon+\epsilon)\psi_l|\nabla U_\epsilon|^2dx\\
					&+\frac{C}{\eta l^2}\left(\|u_\epsilon\|^{2m_1-1}_{2m_1-1}+\epsilon^{2m_1-1}|\Omega_l|\right),\\
					-\int_{\mathbb{R}^d}(u_\epsilon+\epsilon)(\psi_l-1)\nabla U_\epsilon\cdot\nabla v_\epsilon dx=& \int_{\mathbb{R}^d}(1-\psi_l)\nabla (u_\epsilon+\epsilon)^{m_1}\cdot \nabla v_{\epsilon}dx\\
					&+\int_{\mathbb{R}^d}(u_\epsilon+\epsilon)(\psi_l-1) |\nabla v_{\epsilon}|^2dx\\
					\leq&\int_{\mathbb{R}^d}(u_\epsilon+\epsilon)^{m_1}\nabla \psi_l \cdot \nabla v_{\epsilon}dx\\
					&-\int_{\mathbb{R}^d}(1-\psi_l) (u_\epsilon+\epsilon)^{m_1}\Delta v_{\epsilon}dx\\ \leq&\int_{\mathbb{R}^d}(u_\epsilon+\epsilon)^{m_1}w_{\epsilon}(1-\psi_l)dx\\
					&+\frac{C}{l}\int_{\mathbb{R}^d}\left(u^{m_1}+\epsilon^{m_1}\right)|\nabla v_{\epsilon}|dx,\\
					-\epsilon\int_{\mathbb{R}^d}\nabla(\psi_l U_\epsilon)\cdot \nabla v_{\epsilon}dx=-\epsilon\int_{\mathbb{R}^d}\psi_l U_\epsilon w_{\epsilon}dx\leq&\epsilon \|w_{\epsilon}\|_{L^1}\|U_\epsilon\|_{L^\infty},\\
					-\epsilon\int_{\mathbb{R}^d}\nabla (v_{\epsilon}(\psi_l-1))\cdot \nabla v_{\epsilon}dx\leq& \epsilon\int_{\mathbb{R}^d}w_{\epsilon}v_{\epsilon}(1-\psi_l)dx.
				\end{split}
			\end{equation*}
			The regularities of $(u_{\epsilon},v_{\epsilon},w_{\epsilon})$ from Lemmas \ref{the condition for global existence}-\ref{global weak solution exist} assert that
			\begin{equation*}
				\begin{split}
					-\int_{\mathbb{R}^d}I_1\times J_1dx\leq&-(1-\eta)\int_{\mathbb{R}^d}(u_\epsilon+\epsilon)\psi_l|\nabla U_\epsilon|^2dx\\
					&+\frac{C}{\eta l^2}\left(\|u_\epsilon(t)\|^{2m_1-1}_{2m_1-1}+\epsilon^{2m_1-1}|\Omega_l|\right)\\
					 &+\int_{\mathbb{R}^d}(u_\epsilon+\epsilon)^{m_1}w_\epsilon(1-\psi_l)dx+\frac{C}{l}\int_{\mathbb{R}^d}\left(u^{m_1}_\epsilon+\epsilon^{m_1}\right)|\nabla v_{\epsilon}|dx\\
					&+\epsilon \|w_\epsilon\|_{L^1}\|U_\epsilon\|_{L^\infty}+\epsilon\int_{\mathbb{R}^d}w_\epsilon v_\epsilon(1-\psi_l)dx\\
					\leq&-(1-\eta)\int_{\mathbb{R}^d}(u_\epsilon+\epsilon)\psi_l|\nabla U_\epsilon|^2dx+\frac{C}{\eta l^2}\left(1+\epsilon^{2m_1-1}|\Omega_l|\right)\\
					&+C\int_{\mathbb{R}^d}w_\epsilon(1-\psi_l)dx+\frac{C}{l}+ \epsilon C.
				\end{split}
			\end{equation*}
			Doing a similar argument for $-\int_{\mathbb{R}^d}I_2\times J_2dx$, and integrating (\ref{derivative of F}) over time shows that
			\begin{equation*}\label{3.26}
				\begin{split}
					\mathcal{F}[u_{\epsilon}(t),w_{\epsilon}(t)]\leq&\mathcal{F}[u_{0\epsilon},w_{0\epsilon}]\\
					&-(1-\eta)\int^t_0\int_{\mathbb{R}^d}(u_\epsilon+\epsilon)\psi_l\left| \frac{m_1}{m_1-1}\nabla(u_\epsilon+\epsilon)^{m_1-1}-\nabla v_\epsilon\right|^2\\
					&-(1-\eta)\int^t_0 \int_{\mathbb{R}^d}(w_\epsilon+\epsilon)\psi_l\left| \frac{m_2}{m_2-1}\nabla(w_\epsilon+\epsilon)^{m_2-1}-\nabla z_\epsilon\right|^2\\
					&+\frac{CT}{\eta l^2}\left(1+\epsilon^{2m_1-1}|\Omega_l|+\epsilon^{2m_2-1}|\Omega_l|\right)\\
					&+C\int^t_0\int_{\mathbb{R}^d}(u_\epsilon+w_\epsilon)(1-\psi_l)+\frac{CT}{l}+\epsilon CT\,\,\,\text{for}\,\,t\in(0,T),
				\end{split}
			\end{equation*}
			where as $\epsilon$ tends to 0,
			\begin{equation*}\label{3.27}
				\begin{split}
					\mathcal{F}[u(t),w(t)]\leq&\mathcal{F}[u_{0},w_{0}]
					-(1-\eta)\int^t_0\int_{\mathbb{R}^d}u\psi_l\left| \frac{m_1}{m_1-1}\nabla u^{m_1-1}-\nabla v\right|^2\\
					&-(1-\eta)\int^t_0 \int_{\mathbb{R}^d}w\psi_l\left| \frac{m_2}{m_2-1}\nabla w^{m_2-1}-\nabla z\right|^2\\
					&+C\int^t_0\int_{\mathbb{R}^d}(u_0+w_0)(1-\psi_l)+\frac{CT}{\eta l^2}+\frac{CT}{l}\,\,\,\text{for}\,\,t\in(0,T)
				\end{split}
			\end{equation*}
			by the claimed convergence in Lemma \ref{global weak solution exist} and a lower semi-continuity of the free energy dissipation. Finally as $l\rightarrow +\infty$ and $\eta\rightarrow 0$,
			\begin{equation*}\label{3.28}
				\begin{split}
					\mathcal{F}[u(t),w(t)]\leq&\mathcal{F}[u_{0},w_{0}]
					-\int^t_0\int_{\mathbb{R}^d}u\left| \frac{m_1}{m_1-1}\nabla u^{m_1-1}-\nabla v\right|^2\\
					&-\int^t_0 \int_{\mathbb{R}^d}w\left| \frac{m_2}{m_2-1}\nabla w^{m_2-1}-\nabla z\right|^2\,\,\,\text{for}\,\,t\in(0,T).
				\end{split}
			\end{equation*}
			Therefore, $(u,w)$ is a free energy solution by the definition.
			
		\end{proof}
		
		\section{The free energy functional}
		
		Now we concentrate on a deeper analysis of the energy functional $\mathcal{F}$ given by
		$$
		\mathcal{F}[u(t),w(t)]
		=\frac{1}{m_1-1}\int_ {\mathbb{R}^d}u^{m_1}dx+\frac{1}{m_2-1}\int_ {\mathbb{R}^d}w^{m_2}dx
		-c_d\mathcal{H}[u,w]
		$$
		with decay property $\mathcal{F}[u(t),w(t)]\leq \mathcal{F}[u_0,w_0]$ for $t\geq 0$, where	
		\begin{equation*}
			\begin{split}
				\mathcal{H}[u,w]=\iint_{\mathbb{R}^d\times \mathbb{R}^d}\frac{u(x)w(y)}{|x-y|^{d-2}}dxdy=\int_ {\mathbb{R}^d}u(x)I_2(w)(x)dx
				=\int_ {\mathbb{R}^d}w(y)I_2(u)(y)dy.
			\end{split}
		\end{equation*}
		The estimate for $\mathcal{H}$ can be given as follows.
		\begin{lemma}{\label{estimate for H lem}}
			Let $\eta>0$, and let $m_1,m_2, m>1$.
			If
			\be\label{connection between m_1 and m_21}
			m<d/2\,\,\,\,\text{and}\,\,\,\,	mm_2+2mm_2/d\geq m+m_2,
			\ee
			then  for any $f\in L^{m}(\mathbb{R}^d)$ and $g\in L^1(\mathbb{R}^d)\cap L^{m_2}(\mathbb{R}^d)$, there holds
			\be{\label{Young inequality for H1}}
			\begin{split}
				 |\mathcal{H}[f,g]|&\leq\eta\|f\|^{m}_{{m}}+C\eta^{-\frac{1}{m-1}}\|g\|^{\frac{mm_2+2mm_2/d-m-m_2}{(m-1)(m_2-1)}}_1\|g\|^{\frac{m_2-2mm_2/d}{(m-1)(m_2-1)}}_{m_2}.
			\end{split}
			\ee
			Moreover, if
			\be\label{connection between m_1 and m_22}
			m<d/2\,\,\,\,\text{and}\,\,\,\,	mm_1+2mm_1/d\geq m+m_1,
			\ee
			then for any $f\in L^1(\mathbb{R}^d)\cap L^{m_1}(\mathbb{R}^d)$ and $g\in L^{m}(\mathbb{R}^d)$, there holds
			\be{\label{Young inequality for H22}}
			\begin{split}
				|\mathcal{H}[f,g]|
				&\leq C\eta^{-\frac{1}{m-1}}\|f\|^{\frac{m_1m+2m_1m/d-m_1-m}{(m_1-1)(m-1)}}_1\|f\|^{\frac{m_1-2m_1m/d}{(m_1-1)(m-1)}}_{m_1}+\eta\|g\|^{m}_{{m}}.
			\end{split}
			\ee
		\end{lemma}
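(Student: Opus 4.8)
The plan is to combine the bilinear form of the Hardy--Littlewood--Sobolev inequality (Lemma \ref{HLS inequality}, with $\lambda=2$, which is admissible since $d\geq 3$) with an $L^1$--$L^{m_2}$ interpolation of the factor carrying $g$, and to finish with Young's inequality. First I would apply the bilinear HLS inequality with the first exponent chosen to be exactly $p=m$; the admissibility relation $\frac1p+\frac1q=1+\frac2d$ then forces $\frac1q=1+\frac2d-\frac1m$, so that
\[
|\mathcal{H}[f,g]|\leq C_{HLS}\|f\|_m\|g\|_q,\qquad q=\frac{md}{md+2m-d}.
\]
The requirement $q>1$ needed for HLS is equivalent to $\frac2d<\frac1m$, i.e. to the hypothesis $m<d/2$; this is precisely where that assumption enters.

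Next I would interpolate $\|g\|_q\leq\|g\|_1^{1-\theta}\|g\|_{m_2}^{\theta}$, where $\frac1q=(1-\theta)+\frac{\theta}{m_2}$ gives $\theta=\frac{m_2(d-2m)}{md(m_2-1)}$. Validity of this interpolation demands $1\leq q\leq m_2$, i.e. $\theta\in[0,1]$; since $m<d/2$ already yields $\theta>0$, the only real constraint is $\theta\leq1$. A short computation shows that $\theta\leq1$ is algebraically equivalent to $mm_2+2mm_2/d\geq m+m_2$, so the second condition in (\ref{connection between m_1 and m_21}) is exactly what makes the interpolation admissible, the boundary case $\theta=1$, $q=m_2$ corresponding to equality there. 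Substituting yields
\[
|\mathcal{H}[f,g]|\leq C\|f\|_m\,\|g\|_1^{1-\theta}\|g\|_{m_2}^{\theta}.
\]

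Finally I would apply Young's inequality with conjugate exponents $m$ and $\frac{m}{m-1}$ to the product $\|f\|_m\cdot\big(\|g\|_1^{1-\theta}\|g\|_{m_2}^{\theta}\big)$, producing a term $\eta\|f\|_m^m$ together with $C\eta^{-1/(m-1)}\|g\|_1^{(1-\theta)m/(m-1)}\|g\|_{m_2}^{\theta m/(m-1)}$. It then remains only to check that these two exponents coincide with those in (\ref{Young inequality for H1}); inserting the explicit value of $\theta$ one finds $(1-\theta)\frac{m}{m-1}=\frac{mm_2+2mm_2/d-m-m_2}{(m-1)(m_2-1)}$ and $\theta\frac{m}{m-1}=\frac{m_2-2mm_2/d}{(m-1)(m_2-1)}$, matching the statement exactly. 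The second estimate (\ref{Young inequality for H22}) then follows with no extra work from the symmetry $\mathcal{H}[f,g]=\mathcal{H}[g,f]$: one applies the inequality just proved with the roles of $f$ and $g$ interchanged and $m_2$ replaced by $m_1$, and (\ref{connection between m_1 and m_22}) is the corresponding admissibility hypothesis.

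The genuinely delicate point, and the place I would be most careful, is the bookkeeping that identifies the two scalar hypotheses $m<d/2$ and $mm_2+2mm_2/d\geq m+m_2$ with the HLS admissibility $q>1$ and the interpolation admissibility $q\leq m_2$ respectively, together with the verification that Young's inequality reproduces precisely the stated exponents; everything else is a routine chain of classical inequalities.
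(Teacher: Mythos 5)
Your proposal is correct and follows essentially the same route as the paper: the paper writes $\mathcal{H}[f,g]=\int f\, I_2(g)$ and applies H\"older plus the Riesz-potential form of HLS to get $|\mathcal{H}[f,g]|\leq C_{\text{HLS}}\|f\|_m\|g\|_{md/((d+2)m-d)}$, which is exactly your bilinear-HLS step with $q=md/((d+2)m-d)$, and then performs the same $L^1$--$L^{m_2}$ interpolation and Young's inequality, with the hypotheses $m<d/2$ and $mm_2+2mm_2/d\geq m+m_2$ playing precisely the admissibility roles you identify. The second estimate is likewise obtained by symmetry in both arguments.
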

		
		\begin{proof}
			Fixing $m\in\left(1,d/2\right)$, using H\"{o}lder's inequality with $\frac{1}{m}+\frac{m-1}{m}=1$ and the HLS inequality with $\lambda=2$  in Lemma {\ref{HLS inequality}},  we find that
			\begin{equation} {\label{inequality for H(f,g)}}
				\begin{split}
					\mathcal{H}[f,g]&=\int_ {\mathbb{R}^d}f(x)I_{2}(g)(x)dx\leq
					\|f\|_{m}\|I_{2}(g)\|_{\frac{m}{m-1}}\leq C_{\text{HLS}}\|f\|_{m}\|g\|_{\frac{md}{(d+2)m-d}}.
				\end{split}
			\end{equation}
			Since the assumption $m+m_2\leq mm_2+2mm_2/d$	
			ensures that
			\begin{equation*}
				\begin{split}
					1< \frac{md}{(d+2)m-d}\leq m_2,
				\end{split}
			\end{equation*}
			then if $g\in L^1(\mathbb{R}^d)\cap L^{m_2}(\mathbb{R}^d)$ with $m_2> 1$, the following interpolation inequality holds:
			\begin{equation*}
				\begin{split}
					\|g\|_{\frac{md}{(d+2)m-d}}\leq \|g\|^{\theta_1}_1\|g\|^{1-\theta_1}_{m_2}
				\end{split}
			\end{equation*}
			with $\frac{(d+2)m-d}{md}=\theta_1+\frac{1-\theta_1}{m_2}$, $\theta_1\in(0,1)$.	Hence
			\begin{equation*} {\label{property of H}}
				\begin{split}
					|\mathcal{H}[f,g]|&\leq C_{\text{HLS}}\|f\|_{{m}}\|g\|^{\theta_1}_1\|g\|^{1-\theta_1}_{m_2}\\
					 &\leq\eta\|f\|^{m}_{{m}}+C\eta^{-\frac{1}{m-1}}\|g\|^{\frac{mm_2+2mm_2/d-m-m_2}{(m-1)(m_2-1)}}_1\|g\|^{\frac{m_2-2mm_2/d}{(m-1)(m_2-1)}}_{m_2},
				\end{split}
			\end{equation*}
			by Young's inequality, which implies (\ref{Young inequality for H1}). (\ref{Young inequality for H22}) can be also proved if (\ref{connection between m_1 and m_22}) holds.
		\end{proof}

		We establish several variants to the HLS inequality on the lines $L_1, L_2$ and the intersection point $\bf{I}$.
		\begin{lemma}{\label{estimate for H lem2}}
			Let  ${\bf{m}}$ be on $L_1$ , and let  $f\in L^{m_1}(\mathbb{R}^d)$ and $g\in L^1(\mathbb{R}^d)\cap L^{m_2}(\mathbb{R}^d)$. Then
			$$
			C_*:=\sup_{f\neq 0,g\neq 0}\left\{\frac{|\mathcal{H}[f,g]|}{\|f\|_{m_1}\|g\|^{2/d}_{1}\|g\|^{1-2/d}_{m_2}}\right\}<\infty.
			$$
			If ${\bf{m}}$ is on $L_2$, and $f\in L^1(\mathbb{R}^d)\cap L^{m_1}(\mathbb{R}^d)$ and $g\in L^{m_2}(\mathbb{R}^d)$, then
			$$
			C_{\star}:=\sup_{f\neq 0,g\neq 0}\left\{\frac{|\mathcal{H}[f,g]|}{\|f\|^{2/d}_{1}\|f\|^{1-2/d}_{m_1}\|g\|_{m_2}}\right\}<\infty.
			$$
			In addtion, assume that ${\bf{m}}$ is ${\bf{I}}$ and $(f,g)\in \left(L^1(\mathbb{R}^d)\cap L^{m_c}(\mathbb{R}^d)\right)^2$. Then
			\be{\label{upper for critical number}}
			\begin{split}
				C_{c}:=\sup_{f\neq 0,g\neq 0}\left\{\frac{\mathcal{H}[f,g]}{\|f\|^{1/d}_{1}\|f\|^{m_c/2}_{m_c}\|g\|^{1/d}_{1}\|g\|^{m_c/2}_{m_c}}\right\}<\infty.
			\end{split}
			\ee
		\end{lemma}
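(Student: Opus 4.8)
The plan is to obtain all three bounds from the bilinear form of the Hardy--Littlewood--Sobolev inequality in Lemma \ref{HLS inequality} (with $\lambda=2$) followed by a single Lebesgue interpolation; the defining relations of $L_1$, $L_2$ and ${\bf I}$ are precisely what force the interpolation weights to take the stated values, which is the structural reason the three scaling-invariant quotients are finite.

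First I would treat $L_1$. Writing $\mathcal{H}[f,g]=\int_{\mathbb{R}^d}f\,I_2(g)\,dx$ and applying the HLS inequality with the exponent $p=m_1$ on $f$ and its conjugate $q$ determined by $\frac1p+\frac1q=1+\frac2d$, I get $|\mathcal{H}[f,g]|\leq C_{\mathrm{HLS}}\|f\|_{m_1}\|g\|_q$. Here $0<\lambda=2<d$ and $p=m_1>1$ are admissible for $d\geq 3$, and one checks that $\frac1q=1+\frac2d-\frac1{m_1}$ satisfies $\frac1{m_2}<\frac1q<1$, i.e.\ $1<q<m_2$, so that the interpolation $\|g\|_q\leq\|g\|_1^{\theta}\|g\|_{m_2}^{1-\theta}$ with $\frac1q=\theta+\frac{1-\theta}{m_2}$ is legitimate. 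The key computation is that substituting the formula for $1/q$ and using the identity $m_1m_2+2m_1/d=m_1+m_2$ (equivalently $\frac1{m_1}+\frac1{m_2}=1+\frac{2}{dm_2}$) yields $\frac1q-\frac1{m_2}=\frac2d(1-\frac1{m_2})$, hence $\theta=2/d$ and $1-\theta=1-2/d$. This gives exactly the denominator in the definition of $C_*$ and shows $C_*\leq C_{\mathrm{HLS}}<\infty$. The case $L_2$ is the mirror image: apply HLS with $q=m_2$ on $g$ and conjugate $p$, interpolate $\|f\|_p\leq\|f\|_1^{2/d}\|f\|_{m_1}^{1-2/d}$, the identity $m_1m_2+2m_2/d=m_1+m_2$ now pinning the weights, and the admissibility check $1<p<m_1$ is identical after swapping indices.

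At the symmetric point ${\bf I}=(m_c,m_c)$ I would exploit the balanced choice $p=q=\frac{2d}{d+2}$, which satisfies $\frac1p+\frac1q=1+\frac2d$ and lies in $(1,m_c)$ for $d\geq3$ (both $\frac{2d}{d+2}>1$ and $\frac{2d}{d+2}<m_c=\frac{2(d-1)}{d}$ reduce to $d>2$). HLS then gives $\mathcal{H}[f,g]\leq C_{\mathrm{HLS}}\|f\|_p\|g\|_p$, and interpolating each factor separately, $\|f\|_p\leq\|f\|_1^{\alpha}\|f\|_{m_c}^{1-\alpha}$ with $\frac1p=\alpha+\frac{1-\alpha}{m_c}$, one computes $\alpha=1/d$ and $1-\alpha=1-1/d=m_c/2$, and likewise for $g$. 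Multiplying the two estimates reproduces precisely the denominator in \eqref{upper for critical number}, so $C_c\leq C_{\mathrm{HLS}}<\infty$.

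I do not expect a genuine obstacle: each case is one application of HLS followed by one H\"older interpolation. The only point requiring care is the exponent bookkeeping---verifying that the conjugate exponent $q$ (resp.\ $p$) falls strictly between the two interpolation endpoints so that both HLS and the interpolation apply, and checking that the critical-line algebra really collapses the interpolation weights to $2/d,\,1-2/d$ on $L_1,L_2$ and to $1/d,\,m_c/2$ at ${\bf I}$. That these weights emerge automatically from the critical relations is exactly what makes the quotients scaling invariant and therefore bounded.
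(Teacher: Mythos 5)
Your proof is correct and follows essentially the same route as the paper: one application of the Hardy--Littlewood--Sobolev inequality with $\lambda=2$ (the paper phrases it as H\"older against $I_2(g)$, which is the same bilinear estimate) followed by Lebesgue interpolation, with the critical-line identities forcing the weights $2/d$, $1-2/d$ on $L_1,L_2$ and $1/d$, $m_c/2$ at ${\bf I}$. Your exponent bookkeeping checks out, so nothing further is needed.
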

		\begin{proof}
			If ${\bf{m}}$ is on $L_1$, then $m_1\in(m_c,d/2)$ and using (\ref{inequality for H(f,g)}) with $m=m_1$ we have
			\begin{equation*}
				\begin{split}
					|\mathcal{H}[f,g]|\leq  C_{\text{HLS}}\|f\|_{m_1}\|g\|_{\frac{m_1d}{(d+2)m_1-d}}\leq C_{\text{HLS}}\|f\|_{m_1} \|g\|^{\frac{2}{d}}_{1}\|g\|^{1-\frac{2}{d}}_{m_2}.
				\end{split}
			\end{equation*}
			Therefore, $C_*$  is finite and bounded above by $C_{\text{HLS}}$. It is also easy to see that $C_{\star}$ is controlled by $C_{\text{HLS}}$ if ${\bf{m}}$ is on $L_2$. Finally, with the help of the HLS inequality and H\"{o}lder's inequality, we find that
			\begin{equation*}
				\begin{split}
					|\mathcal{H}[f,g]|&\leq C_{\text{HLS}}\|f\|_{\frac{2d}{d+2}}\|g\|_{\frac{2d}{d+2}}\leq C_{\text{HLS}}\|f\|^{1/d}_{1}\|f\|^{m_c/2}_{m_c}\|g\|^{1/d}_{1}\|g\|^{m_c/2}_{m_c}
				\end{split}
			\end{equation*}
			if ${\bf{m}}$ is ${\bf{I}}$. Then the definition of $C_{c}$ is valid.
		\end{proof}

		Define
		$$
		M_{1c}=(c_dC_{\star})^{-d/2}\left(m_2/(m_2-1)\right)^{d/2}(m_1-1)^{-d(m_2-1)/(2m_2)},
		$$
		$$
		M_{2c}=(c_dC_{*})^{-d/2}\left(m_1/(m_1-1)\right)^{d/2}(m_2-1)^{-d(m_1-1)/(2m_1)},
		$$
		and
		$$
		M_{c}=\left(2/[c_dC_c(m_c-1)]\right)^{d/2}.
		$$
		The lower and upper bounds for $\mathcal{F}$ in the sets $S_{M_1}\times S_{M_2}$ below is given next.
		\begin{lemma}{\label{infimum for FEF4}}
			Let $(f,g)$ satisfy $f\in S_{M_1}$ and $g\in S_{M_2}$. If ${\bf{m}}$ is on $L_1$, then
			\be\label{infimum inequlity for F in case 1}
			\begin{split}
				 &\left(c_dC_{*}\right)^{\frac{m_1}{m_1-1}}(m_1-1)^{\frac{m_1}{m_1-1}}m^{-\frac{m_1}{m_1-1}}_1\left(M^{\frac{2m_1}{d(m_1-1)}}_{2c}-M^{\frac{2m_1}{d(m_1-1)}}_{2}\right)\|g\|^{m_2}_{m_2}\\
				&\leq\mathcal{F}[f,g]
				\leq\frac{2}{m_1-1}\|f\|^{m_1}_{m_1}\\
				 &+\left(c_dC_{*}\right)^{\frac{m_1}{m_1-1}}(m_1-1)^{\frac{m_1}{m_1-1}}m^{-\frac{m_1}{m_1-1}}_1\left(M^{\frac{2m_1}{d(m_1-1)}}_{2c}+M^{\frac{2m_1}{d(m_1-1)}}_{2}\right)\|g\|^{m_2}_{m_2}
			\end{split}
			\ee
			and
			$$
			\inf_{f\in S_{M_1}}\inf_{g\in S_{M_2}}\mathcal{F}[f,g]=0,\,\,\text{if}\,\,M_2\in(0,M_{2c}].
			$$
			If ${\bf{m}}$ is on $L_2$, then
			\be\label{infimum inequlity for F in case 2}
			\begin{split}
				\mathcal{F}[f,g]\geq \left(c_dC_{\star}\right)^{\frac{m_2}{m_2-1}}(m_2-1)^{\frac{m_2}{m_2-1}}m^{-\frac{m_2}{m_2-1}}_2\left(M^{\frac{2m_2}{d(m_2-1)}}_{1c}-M^{\frac{2m_2}{d(m_2-1)}}_{1}\right)\|f\|^{m_1}_{m_1}
			\end{split}
			\ee
			and
			\be\label{infimum for F in case 2}
			\begin{split}
				\inf_{f\in S_{M_1}}\inf_{g\in S_{M_2}}\mathcal{F}[f,g]=0,\,\,&\text{if}\,\,M_1\in(0,M_{1c}].
			\end{split}
			\ee
			If ${\bf{m}}$ is ${\bf{I}}$, then
			$$
			 \mathcal{F}[f,g]\geq\frac{(c_dC_c)^2(m_c-1)}{4}\left(M^{\frac{4}{d}}_c-M^{\frac{2}{d}}_1M^{\frac{2}{d}}_2\right)\|g\|^{m_c}_{m_c}
			$$
			or
			$$
			 \mathcal{F}[f,g]\geq\frac{(c_dC_c)^2(m_c-1)}{4}\left(M^{\frac{4}{d}}_c-M^{\frac{2}{d}}_1M^{\frac{2}{d}}_2\right)\|f\|^{m_c}_{m_c}.
			$$
			Furthermore,
			\be\label{infimum for F in case 3}
			\begin{split}
				\inf_{f\in S_{M_1}}\inf_{g\in S_{M_2}}\mathcal{F}[f,g]=0,\,\,&\text{if}\,\,M_1M_2\in(0,M^2_{c}].
			\end{split}
			\ee
		\end{lemma}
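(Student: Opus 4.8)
The plan is to feed the sharp integral estimates of Lemma~\ref{estimate for H lem2} into $\mathcal F$ and then absorb one of the two Lebesgue norms by a single, carefully tuned Young inequality, after which the threshold masses $M_{1c},M_{2c},M_c$ are recovered by matching constants. I would treat $L_1$ in full; the case $L_2$ is identical after interchanging $(u,m_1,M_1)\leftrightarrow(w,m_2,M_2)$, and $\bf{I}$ is a symmetric variant.

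\textbf{Lower bound on $L_1$.} Since $\bf{m}$ lies on $L_1$, Lemma~\ref{estimate for H lem2} gives $c_d\mathcal{H}[f,g]\le c_dC_{*}\|f\|_{m_1}M_2^{2/d}\|g\|_{m_2}^{1-2/d}$, using $\|g\|_1=M_2$. I would then apply Young's inequality $ab\le \tfrac{1}{m_1-1}a^{m_1}+\big(\tfrac{m_1-1}{m_1}\big)^{\frac{m_1}{m_1-1}}b^{\frac{m_1}{m_1-1}}$ with $a=\|f\|_{m_1}$ and $b=c_dC_{*}M_2^{2/d}\|g\|_{m_2}^{1-2/d}$, the point being that the coefficient of $a^{m_1}$ is chosen to be exactly $\tfrac{1}{m_1-1}$, so that this term cancels the first summand of $\mathcal{F}$. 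The decisive algebraic fact, which is where lying \emph{on} $L_1$ enters, is that the defining relation $m_1m_2+2m_1/d=m_1+m_2$ is equivalent to $m_2=\tfrac{(d-2)m_1}{d(m_1-1)}$, hence $(1-\tfrac2d)\tfrac{m_1}{m_1-1}=m_2$ and the residual term collapses to the clean power $\|g\|_{m_2}^{m_2}$. Writing $\alpha:=(c_dC_{*})^{\frac{m_1}{m_1-1}}(m_1-1)^{\frac{m_1}{m_1-1}}m_1^{-\frac{m_1}{m_1-1}}$ gives $\mathcal{F}[f,g]\ge\big(\tfrac{1}{m_2-1}-\alpha M_2^{\frac{2m_1}{d(m_1-1)}}\big)\|g\|_{m_2}^{m_2}$, and it remains to check that $\tfrac{1}{m_2-1}=\alpha M_{2c}^{\frac{2m_1}{d(m_1-1)}}$; raising the definition of $M_{2c}$ to the power $\tfrac{2m_1}{d(m_1-1)}$ reduces this to an identity and yields exactly \x{infimum inequlity for F in case 1}.

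\textbf{Upper bound and the infimum on $L_1$.} For the upper bound I would simply discard the nonnegative interaction, $\mathcal{F}\le\tfrac{1}{m_1-1}\|f\|_{m_1}^{m_1}+\tfrac{1}{m_2-1}\|g\|_{m_2}^{m_2}$, rewrite $\tfrac{1}{m_2-1}=\alpha M_{2c}^{\frac{2m_1}{d(m_1-1)}}$, and pad by the nonnegative quantities $\tfrac{1}{m_1-1}\|f\|_{m_1}^{m_1}$ and $\alpha M_2^{\frac{2m_1}{d(m_1-1)}}\|g\|_{m_2}^{m_2}$ to reach the stated symmetric form. The two bounds then settle the infimum when $M_2\in(0,M_{2c}]$: the lower bound forces $\mathcal{F}\ge0$, while along the mass-preserving family $f_R=\tfrac{M_1}{|B_R|}\mathbf{1}_{B_R}$, $g_R=\tfrac{M_2}{|B_R|}\mathbf{1}_{B_R}$ one has $\|f_R\|_{m_1}^{m_1}=M_1^{m_1}|B_R|^{1-m_1}\to0$ and likewise $\|g_R\|_{m_2}^{m_2}\to0$ as $R\to\infty$ (both exponents negative since $m_1,m_2>1$), so the upper bound gives $\mathcal{F}[f_R,g_R]\to0$; hence the infimum equals $0$.

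\textbf{The intersection point $\bf{I}$ and the main difficulty.} Here $m_1=m_2=m_c$ and Lemma~\ref{estimate for H lem2} gives $c_d\mathcal{H}[f,g]\le c_dC_c M_1^{1/d}M_2^{1/d}\|f\|_{m_c}^{m_c/2}\|g\|_{m_c}^{m_c/2}$. Now the natural split is the quadratic Young inequality $ab\le\tfrac{\delta}{2}a^2+\tfrac{1}{2\delta}b^2$ with $a=\|f\|_{m_c}^{m_c/2}$, $b=c_dC_cM_1^{1/d}M_2^{1/d}\|g\|_{m_c}^{m_c/2}$ and $\delta=\tfrac{2}{m_c-1}$, so that $\tfrac{\delta}{2}a^2$ cancels $\tfrac{1}{m_c-1}\|f\|_{m_c}^{m_c}$; this leaves $\mathcal{F}\ge\big(\tfrac{1}{m_c-1}-\tfrac{(m_c-1)(c_dC_c)^2}{4}(M_1M_2)^{2/d}\big)\|g\|_{m_c}^{m_c}$, and the definition $M_c=(2/[c_dC_c(m_c-1)])^{d/2}$ is engineered precisely so that $\tfrac{1}{m_c-1}=\tfrac{(m_c-1)(c_dC_c)^2}{4}M_c^{4/d}$, giving the claimed estimate; the symmetric choice absorbing $\|g\|_{m_c}^{m_c}$ produces the companion inequality with $\|f\|_{m_c}^{m_c}$. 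Since $M_c^{4/d}-(M_1M_2)^{2/d}\ge0\iff M_1M_2\le M_c^2$, the lower bound forces $\mathcal{F}\ge0$ on that range, and the same spreading family drives $\mathcal{F}\to0$, establishing \x{infimum for F in case 3}. The computations are elementary, so the only real care is bookkeeping: one must tune the single free parameter in Young's inequality so that exactly one energy term is annihilated, verify that the exponent of the surviving norm collapses to $m_2$ (resp.\ $m_1$, $m_c$)---an identity valid \emph{only} on the critical line or point---and confirm that the leftover constant coincides with the prescribed threshold mass. The conceptual half is the achievability bound $\le0$ in each infimum, which the mass-conserving spreading family supplies.
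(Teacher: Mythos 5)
Your argument is correct and follows essentially the same route as the paper: bound $c_d\mathcal{H}$ via Lemma~\ref{estimate for H lem2}, absorb one energy term by a Young inequality tuned so its coefficient is exactly $\tfrac{1}{m_1-1}$ (resp.\ $\tfrac{1}{m_c-1}$), use the on-the-line identity $(1-\tfrac2d)\tfrac{m_1}{m_1-1}=m_2$ to collapse the surviving exponent, and match the leftover constant to $M_{2c}$ (resp.\ $M_c$). The only cosmetic deviations are that you realize the infimum with rescaled indicator functions on expanding balls where the paper uses the spreading Gaussians $h_i(x,t)=M_i(4\pi t)^{-d/2}e^{-|x|^2/4t}$, and you obtain the upper bound by discarding the nonnegative interaction rather than re-estimating $|\mathcal{H}|$; both variants are equally valid.
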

		\begin{proof}
			By Lemma \ref{estimate for H lem2}, $\mathcal{H}$ satisfies
			\begin{equation*}	
				\begin{split}
					|\mathcal{H}[f,g]|\leq& C_{*}\|f\|_{m_1}\|g\|^{\frac{2}{d}}_{1}\|g\|^{1-\frac{2}{d}}_{m_2}\\
					\leq& \frac{1}{c_d(m_1-1)}\|f\|^{m_1}_{m_1}\\
					 &+C_{*}\left(c_dC_{*}\right)^{\frac{1}{m_1-1}}\left(\frac{m_1-1}{m_1}\right)^{\frac{m_1}{m_1-1}}\|g\|^{\frac{2m_1}{d(m_1-1)}}_{1}\|g\|^{\left(1-\frac{2}{d}\right)\frac{m_1}{m_1-1}}_{m_2}\\
					=&\frac{1}{c_d(m_1-1)}\|f\|^{m_1}_{m_1}\\
					 &+C_{*}\left(c_dC_{*}\right)^{\frac{1}{m_1-1}}\left(\frac{m_1-1}{m_1}\right)^{\frac{m_1}{m_1-1}}\|g\|^{\frac{2m_1}{d(m_1-1)}}_{1}\|g\|^{m_2}_{m_2}.
				\end{split}
			\end{equation*}	
			Then $\mathcal{F}$ can be estimated as
			\begin{equation*}
				\begin{split}
					 \mathcal{F}[f,g]=&\frac{1}{m_1-1}\|f\|^{m_1}_{m_1}+\frac{1}{m_2-1}\|g\|^{m_2}_{m_2}-c_d\mathcal{H}[f,g]\\
					\geq&\frac{1}{m_2-1}\|g\|^{m_2}_{m_2}\\
					 &-\left(c_dC_{*}\right)^{\frac{m_1}{m_1-1}}\left(\frac{m_1-1}{m_1}\right)^{\frac{m_1}{m_1-1}}\|g\|^{\frac{2m_1}{d(m_1-1)}}_{1}\|g\|^{m_2}_{m_2}\\
					 =&\left(c_dC_{*}\right)^{\frac{m_1}{m_1-1}}\left(\frac{m_1-1}{m_1}\right)^{\frac{m_1}{m_1-1}}\left(M^{\frac{2m_1}{d(m_1-1)}}_{2c}-M^{\frac{2m_1}{d(m_1-1)}}_{2}\right)\|g\|^{m_2}_{m_2}
				\end{split}
			\end{equation*}
			and
			\begin{equation*}
				\begin{split}
					\mathcal{F}[f,g]	 \leq&\frac{2}{m_1-1}\|f\|^{m_1}_{m_1}\\
					 &+\left(c_dC_{*}\right)^{\frac{m_1}{m_1-1}}\left(\frac{m_1-1}{m_1}\right)^{\frac{m_1}{m_1-1}}\left(M^{\frac{2m_1}{d(m_1-1)}}_{2c}+M^{\frac{2m_1}{d(m_1-1)}}_{2}\right)\|g\|^{m_2}_{m_2}.
				\end{split}
			\end{equation*}
			In the case $M_2\leq M_{2c}$, since $\mathcal{F}\geq 0$, then the infimum is nonnegative. Taking
			\begin{equation*}
				\begin{split}
					h_1(x,t)=\frac{M_1}{(4\pi t)^{\frac{d}{2}}}e^{-\frac{|x|^2}{4t}}\,\,\,\text{and}\,\,\,h_2(x,t)=\frac{M_2}{(4\pi t)^{\frac{d}{2}}}e^{-\frac{|x|^2}{4t}},
				\end{split}
			\end{equation*}
			it is obvious that $h_i\in L^1(\mathbb{R}^d)$ with $\|h_i\|_1=M_i$, $i=1,2$, satisfy
			\begin{equation*}
				\begin{split}
					\|h_i\|^{m_i}_{m_i}=O(t^{-\frac{d(m_i-1)}{2}}),
				\end{split}
			\end{equation*}
			which implies that $h_i\in S_{M_i}$ and that $\mathcal{F}[h_1,h_2]$ tends to 0 as $t\rightarrow\infty$. Therefore,
			$$\inf_{f\in S_{M_1}}\inf_{g\in S_{M_2}}\mathcal{F}[f,g]=0.$$
			
			If ${\bf{m}}$ is on $L_2$, we have (\ref{infimum inequlity for F in case 2}) one more by the HLS inequality and H\"{o}lder's inequality, and take $h_i$ above to see (\ref{infimum for F in case 2}).
			
			If ${\bf{m}}$ is ${\bf{I}}$, since
			\begin{equation*}	
				\begin{split}
					|\mathcal{H}[f,g]|\leq&  C_cM^{\frac{1}{d}}_{1}M^{\frac{1}{d}}_{2}\|f\|^{\frac{m_c}{2}}_{m_c}\|g\|^{\frac{m_c}{2}}_{m_c}
					\leq \frac{1}{c_d(m_c-1)}\|f\|^{m_c}_{m_c}
					+\frac{M^{\frac{2}{d}}_1M^{\frac{2}{d}}_2}{c_d(m_c-1)M^{\frac{4}{d}}_c} \|g\|^{m_c}_{m_c}
				\end{split}
			\end{equation*}
			or
			\begin{equation*}	
				\begin{split}
					|\mathcal{H}[f,g]|\leq&
					\frac{M^{\frac{2}{d}}_1M^{\frac{2}{d}}_2}{c_d(m_c-1)M^{\frac{4}{d}}_c} \|f\|^{m_c}_{m_c}
					+\frac{1}{c_d(m_c-1)} \|g\|^{m_c}_{m_c}
				\end{split}
			\end{equation*}
			by Young's inequality, then $\mathcal{F}$ satisfies
			\begin{equation*}
				\begin{split}
					\mathcal{F}[f,g]
					 \geq&\frac{1}{m_c-1}\|f\|^{m_c}_{m_c}+\frac{1}{m_c-1}\|g\|^{m_c}_{m_c}-c_dC_cM^{\frac{1}{d}}_{1}M^{\frac{1}{d}}_{2}\|f\|^{\frac{m_c}{2}}_{m_c}\|g\|^{\frac{m_c}{2}}_{m_c}\\
					 \geq&\frac{(c_dC_c)^2(m_c-1)}{4}\left(\frac{4}{(c_dC_c(m_c-1))^2}-M^{\frac{2}{d}}_{1}M^{\frac{2}{d}}_{2}\right)\|g\|^{m_c}_{m_c}
				\end{split}
			\end{equation*}
			or
			\begin{equation*}
				\begin{split}
					\mathcal{F}[f,g]
					 \geq&\frac{(c_dC_c)^2(m_c-1)}{4}\left(\frac{4}{(c_dC_c(m_c-1))^2}-M^{\frac{2}{d}}_{1}M^{\frac{2}{d}}_{2}\right)\|f\|^{m_c}_{m_c}.
				\end{split}
			\end{equation*}
			One finally obtains from
			\begin{equation*}
				\begin{split}
					\mathcal{F}[f,g]	 \leq&\frac{2}{m_c-1}\|f\|^{m_1}_{m_1}
					 +\frac{(c_dC_c)^2(m_c-1)}{4}\left(\frac{4}{(c_dC_c(m_c-1))^2}+M^{\frac{2}{d}}_{1}M^{\frac{2}{d}}_{2}\right)\|g\|^{m_c}_{m_c}
				\end{split}
			\end{equation*}
			that (\ref{infimum for F in case 3}) is true by taking $f=h_1$ and $g=h_2$.
		\end{proof}
		
		The characterization of non-zero minimizers of  $\mathcal{F}$ in $S_{M_1}\times S_{M_2}$ on critical lines and point is the goal in this subsection. If  ${\bf{m}}$ is  ${\bf{I}}$, the existence of global minimizers is guaranteed in particular situation. The proof is inspired by  {\cite[Proposition 3.5]{Carrillo09-CVPDE}}.
		
		\begin{theorem}{\label{extremals of H in case 3}}
			Let ${\bf{m}}$  be ${\bf{I}}$. Then there exist a pair of nonnegative, radially symmetric and non-increasing functions $(f^*,g^*)\in \left(L^1(\mathbb{R}^d)\cap  L^{m_c}(\mathbb{R}^d)\right)^2$ such that
			$$
			\mathcal{H}[f^*,g^*]=C_{c}.
			$$
In addition, there exists a minimizer $(f,g)\in  S_{M_1}\times S_{M_2}$ of $\mathcal{F}$ if $M_1=M_2=M_c$, and the minimizer satisfies
			$$
			f(x)=g(x)=\begin{cases}
				\frac{1}{R^d_0}\left[\zeta\left(\frac{x-x_0}{R_0}\right)\right]^{d/(d-2)}
				,\,\,\,&\text{if}\,\,\,x\in B\left(x_0,R_0\right), \\[0.2cm]
				0,\,\,\,&\text{if}\,\,\,x\in \mathbb{R}^d\backslash B\left(x_0,R_0\right)
			\end{cases}
			$$
			with some $R_0>0$  and $x_0\in\mathbb{R}^d$, where $\zeta$ is the unique positive radial classical solution to the Lane-Emden equation
			$$
			\begin{cases}
				-\Delta\zeta=\frac{m_c-1}{m_c}\zeta^{1/(m_c-1)}, &x\in B(0,1),\,\, \\[0.2cm]
				\zeta=0,\,\,&x\in\partial B(0,1).
			\end{cases}
			$$
		\end{theorem}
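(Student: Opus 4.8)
The plan is to split the proof into two parts: (A) establishing that the bilinear Hardy--Littlewood--Sobolev functional defining $C_c$ in \eqref{upper for critical number} admits a maximizing pair, and (B) showing that, at the critical masses $M_1=M_2=M_c$, any free energy minimizer is exactly such a maximizer and then computing its profile explicitly.

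\emph{Part (A): existence of extremals.} Since the kernel $|x-y|^{-(d-2)}$ is symmetric and strictly decreasing, the Riesz rearrangement inequality gives $\mathcal{H}[f,g]\le \mathcal{H}[f^*,g^*]$ for the symmetric--decreasing rearrangements, while all $L^p$-norms are preserved; hence a maximizing sequence may be taken radially symmetric and non-increasing. Using the two-parameter scaling $f\mapsto \mu f(\lambda\,\cdot)$ for each component, I would normalize $\|f_n\|_1=\|f_n\|_{m_c}=\|g_n\|_1=\|g_n\|_{m_c}=1$; because fixing both the $L^1$- and $L^{m_c}$-norms pins a length scale, this removes the scale invariance entirely. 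I then apply Lions' concentration--compactness to the densities $f_n^{2d/(d+2)},g_n^{2d/(d+2)}$. Vanishing is excluded because it would force $\|f_n\|_{2d/(d+2)}\to0$, hence $\mathcal{H}[f_n,g_n]\to0$ through the HLS bound $\mathcal H\le C_{\mathrm{HLS}}\|f_n\|_{2d/(d+2)}\|g_n\|_{2d/(d+2)}$, contradicting $\mathcal{H}[f_n,g_n]\to C_c>0$; dichotomy is excluded by radial monotonicity, which forbids splitting the mass into separated lumps. The surviving compact case, together with Helly's selection theorem for radial monotone functions, yields a nonzero radial limit $(f^*,g^*)$ with $\mathcal{H}[f^*,g^*]=C_c$ after normalization.

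\emph{Part (B): the critical--mass minimizer.} By Lemma \ref{infimum for FEF4}, $\mathcal F\ge0$ on $S_{M_1}\times S_{M_2}$ when $M_1M_2=M_c^2$, and the chain of inequalities there is the bound of Lemma \ref{estimate for H lem2} followed by Young's inequality. Consequently a pair with $\mathcal F[f,g]=0$ must saturate both: equality in Young's inequality forces $\|f\|_{m_c}=\|g\|_{m_c}$, and equality in the $C_c$-bound forces $(f,g)$ to be an extremal pair. Thus at $M_1=M_2=M_c$ the minimizer is precisely a maximizer from Part (A), rescaled to the prescribed masses, so existence follows. Writing the Euler--Lagrange system for the constrained maximization, with Lagrange multipliers for $\|f\|_1$ and $\|f\|_{m_c}^{m_c}$, gives on the support $\{f>0\}$ the stationary relation $\tfrac{m_c}{m_c-1}f^{m_c-1}=c_d I_2(g)-\alpha$ and symmetrically for $g$. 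Choosing the symmetric solution $f=g$ and applying $-\Delta$, using $-\Delta I_2(f)=c_d^{-1}f$, I obtain
\[
-\Delta\bigl(f^{m_c-1}\bigr)=\kappa\,\bigl(f^{m_c-1}\bigr)^{1/(m_c-1)}
\]
on the support, where $1/(m_c-1)=d/(d-2)$ since $m_c-1=(d-2)/d$. Radial monotonicity makes the support a ball $B(x_0,R_0)$, and $f^{m_c-1}=0$ on its boundary supplies the Dirichlet condition; setting $\zeta$ proportional to $f^{m_c-1}$ (and rescaling so the normalization in the statement holds) identifies $f=\zeta^{d/(d-2)}$ up to the stated scaling. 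Since the exponent $d/(d-2)$ is Sobolev--subcritical, Gidas--Ni--Nirenberg symmetry and the known uniqueness of positive radial solutions of the Dirichlet Lane--Emden problem pin $\zeta$; the free parameter $R_0$ is then fixed by $\|f\|_1=M_c$ and $x_0$ by translation invariance.

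\emph{Main obstacle.} The delicate point is the compactness in Part (A): although the scale invariance is removed by fixing two norms, one must still rule out radial mass escaping to infinity, which is exactly the vanishing alternative treated above, and one must upgrade a.e.\ convergence to convergence of $\mathcal H$. A second technical step is the regularity bootstrap: one has to show the maximizer is regular enough (bounded, continuous on the interior of its support) to justify applying $-\Delta$ and to read $\zeta$ as a classical solution, which follows from elliptic regularity applied to the Euler--Lagrange identity. Finally, selecting the symmetric branch $f=g$ should be justified either from the equality analysis above (which already forces $\|f\|_{m_c}=\|g\|_{m_c}$) together with uniqueness of the Lane--Emden profile, or by a direct symmetrization of the extremal pair.
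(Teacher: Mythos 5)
Your Part (B) is essentially sound and in fact more detailed than the paper, which simply exhibits $f=g=M_c f^*$ with $\|f^*\|_1=\|f^*\|_{m_c}=1$, checks $\mathcal{F}[f,g]=0$ directly from the definition of $M_c$, and cites \cite[Proposition 3.5]{Carrillo09-CVPDE} for the Lane--Emden profile. The real divergence, and the place where your argument has a genuine gap, is Part (A). You propose to run concentration--compactness directly on the bilinear functional after normalizing $\|f_n\|_1=\|f_n\|_{m_c}=\|g_n\|_1=\|g_n\|_{m_c}=1$. That normalization is not achievable: the only scalings leaving the ratio in (\ref{upper for critical number}) invariant are $f\mapsto\mu_1 f(\lambda\,\cdot)$, $g\mapsto\mu_2 g(\lambda\,\cdot)$ with the \emph{same} $\lambda$ (independent dilations of $f$ and $g$ do not act cleanly on the coupling kernel $|x-y|^{-(d-2)}$), so the invariance group is three-dimensional and can fix only three of the four norms. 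Fixing $\|f_n\|_1=\|f_n\|_{m_c}=\|g_n\|_1=1$ already determines $\lambda$ from $f_n$, and $t_n:=\|g_n\|_{m_c}$ remains a free parameter measuring the relative concentration scale of $g_n$ against $f_n$; a maximizing sequence could a priori have $t_n\to0$ or $t_n\to\infty$, and your sketch does not rule this out. (The exclusion of dichotomy ``by radial monotonicity'' is also too quick in the pure $L^1\cap L^{m_c}$ setting, where no gradient bound is available.)

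The paper closes exactly this loophole before any compactness question arises: since the kernel is positive definite, $\mathcal{H}[f,g]\le\sqrt{\mathcal{H}[f,f]}\,\sqrt{\mathcal{H}[g,g]}$ with equality if and only if $g=c_0 f$ (\cite[Theorem 9.8]{Lieb2001}). Combined with the diagonal bound this shows that $C_c$ coincides with $\sup_{f\neq0}\mathcal{H}[f,f]/\bigl(\|f\|_1^{2/d}\|f\|_{m_c}^{m_c}\bigr)$ and that any bilinear extremal must have $g$ proportional to $f$; the existence of a radial, non-increasing diagonal maximizer $f^*$ with $\|f^*\|_1=\|f^*\|_{m_c}=1$ is then quoted from \cite[Proposition 3.3]{Carrillo09-CVPDE}, and one takes $g^*=f^*$. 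I recommend you adopt this reduction: it removes the relative-scale degeneracy entirely, and it also hands you the proportionality $f=g$ needed at the end of Part (B) directly, instead of extracting it from the equality cases of Young's inequality and the $C_c$-bound.
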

		\begin{proof}
			We claim that if $C_c$ in (\ref{upper for critical number}) is obtained by some non-zero $f$ and $g$, then $g=c_0f$ with some $c_0$. This is easily verified by the positive definite of $|x-y|^{-(d-2)}$, see {\cite[Theorem 9.8]{Lieb2001}}. In fact, suppose that there exist a pair of maximizing nonnegative functions $(f,g)\in \left(L^1(\mathbb{R}^d)\cap  L^{m_c}(\mathbb{R}^d)\right)^2$ such that
			\begin{equation*}
				\begin{split}
					 \mathcal{H}[f,g]=C_c\|f\|^{\frac{1}{d}}_{1}\|f\|^{\frac{m_c}{2}}_{m_c}\|g\|^{\frac{1}{d}}_{1}\|g\|^{\frac{m_c}{2}}_{m_c}.
				\end{split}
			\end{equation*}
			Then by {\cite[Theorem 9.8]{Lieb2001}} and the HLS inequality,
			\be\label{equal in the case 3 2}
			\begin{split}
				\mathcal{H}[f,g]\leq& \sqrt{\mathcal{H}[f,f]}\cdot\sqrt{\mathcal{H}[g,g]}\\
				 \leq&C_{c}\|f\|^{\frac{1}{d}}_{1}\|f\|^{\frac{m_c}{2}}_{m_c}\|g\|^{\frac{1}{d}}_{1}\|g\|^{\frac{m_c}{2}}_{m_c}.
			\end{split}
			\ee
			However, (\ref{equal in the case 3 2}) is an equality if and only if $g=c_0f$ with some constant $c_0$.
			
			Note that	
			\be{\label{upper for critical number in equal case}}
			\begin{split}
				C_{c}=\sup_{f\neq 0}\left\{\frac{\mathcal{H}[f,f]}{\|f\|^{\frac{2}{d}}_{1}\|f\|^{m_c}_{m_c}},\,\,\,f\in L^1(\mathbb{R}^d)\cap L^{m_c}(\mathbb{R}^d)\right\}.
			\end{split}
			\ee
			The existence of a maximizing nonnegative, radially symmetric and non-increasing $f^*$ with $\|f^*\|_{1}=\|f^*\|_{m_c}=1$ for (\ref{upper for critical number in equal case}) has been given in {\cite[Proposition 3.3]{Carrillo09-CVPDE}}. So choosing $g^*=c_0f^*$, then
			$\mathcal{H}[f^*,g^*]=c_0C_c$ and the first conclusion has been proved with $c_0=1$.
			
			To derive minimizers for $\mathcal{F}$ in the situation $M_1=M_2=M_c$, with
			$f:=M_cf^*$ and  $g:=M_cf^*$ we  have $(f,g)\in S_{M_1}\times S_{M_2}$ with $\|f\|_{1}=\|f\|_{m_c}=M_c, \|g\|_{1}=\|g\|_{m_c}=M_c$. After a careful computation we infers that
			\begin{equation*}	
				\begin{split}
					\mathcal{F}[f,g]=0
				\end{split}
			\end{equation*}	
			by the definition of $M_c$ and $(f,g)$ is a non-zero global minimizer of $\mathcal{F}$  in $S_{M_1}\times S_{M_2}$.
			The precisely description of the set of minimizers of $\mathcal{F}$ was derived in {\cite[Proposition 3.5]{Carrillo09-CVPDE}}, we omit it here and have proved the second conclusion.	
			
		\end{proof}
		
		On $L_1$, we assert that there is no non-zero minimizer of $\mathcal{F}$ in $S_{M_1}\times S_{M_2}$  if $M_2=M_{2c}$. The proof includes two steps: the first one is to derive the nonexistence of non-trivial classical solution to a  Lane-Emden system (see Lemma \ref{nonexistence for LE system in case 1}), and the second is to make a contradiction by the achievement of Euler-Largrange equalities which consist of the Lane-Emden system on the assumption that minimizers of its free energy exist  (see Theorem \ref{nonexistence of minimiser in the case 1} ).
		\begin{lemma}{\label{nonexistence for LE system in case 1}}
			Let $M_1,M_2,\rho>0$, and let $m_1>1$ and $m_2>1$. Consider a Lane-Emden system
			\be\label{Lane-Emden system}
			\begin{split}
				\begin{cases}
					&-\Delta\vartheta(x)=\frac{m_1-1}{m_1}\varsigma^{\frac{1}{m_2-1}}(x),\,\,\,\,x\in \Omega_1=\mathbb{R}^d,\\
					&-\Delta\varsigma(x)=\frac{m_2-1}{m_2}\vartheta^{\frac{1}{m_1-1}}(x),\,\,\,\,x\in \Omega_2=B(0,\rho),\\
					&\varsigma(x)=0,\,\,\,\,x\in \mathbb{R}^d\setminus \Omega_2.
				\end{cases}
			\end{split}
			\ee
			Then (\ref{Lane-Emden system}) does not admit any nonnegative and non-trivial classical solution $(\vartheta,\varsigma)\in \left(L^{1/(m_1-1)}(\mathbb{R}^d)\cap L^{m_1/(m_1-1)}(\mathbb{R}^d)\right)\times \left(L^{1/(m_2-1)}(\mathbb{R}^d)\cap L^{m_2/(m_2-1)}(\mathbb{R}^d)\right)$ with \\		
			$\|\vartheta^{1/(m_1-1)}\|_1=M_1$ and $\|\varsigma^{1/(m_2-1)}\|_1=M_2$, provided that ${\bf{m}}$ is on $L_1$.
		\end{lemma}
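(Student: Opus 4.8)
The plan is to argue by a Pohozaev-type identity adapted to the mixed-domain structure of \eqref{Lane-Emden system}, combined with a rigidity analysis of the sharp constant $C_*$ from Lemma~\ref{estimate for H lem2}. I would first record the potential-theoretic structure: $\vartheta$ is the Newtonian potential of the nonnegative density $\frac{m_1-1}{m_1}\varsigma^{1/(m_2-1)}$, which is supported in $\overline{B(0,\rho)}$. Hence $\vartheta>0$ and is smooth on $\mathbb{R}^d$, is harmonic outside $B(0,\rho)$, and satisfies $\vartheta(x)=c|x|^{-(d-2)}(1+o(1))$ with $\nabla\vartheta(x)=O(|x|^{-(d-1)})$ as $|x|\to\infty$. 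Since $-\Delta\vartheta$ and the ball are rotation invariant, I would next show (by moving planes, or via symmetric decreasing rearrangement at the variational level) that any nonnegative classical solution is radial and non-increasing; in particular $\varsigma\in C^2(\overline{B(0,\rho)})$ with $\varsigma(\rho)=0$ and $\varsigma'(\rho)\le 0$, while the exterior harmonicity yields the clean relation $\vartheta'(\rho)=-\frac{d-2}{\rho}\vartheta(\rho)$. This decay at infinity and these boundary relations are exactly what is needed to kill the boundary terms at infinity and to reduce those on $\partial B(0,\rho)$ to the data $\vartheta(\rho),\varsigma'(\rho)$.

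The second step is to assemble the identities. Testing the first equation against $\varsigma$ and the second against $\vartheta$ on $B(0,\rho)$ gives the two Nehari-type relations
\[
D:=\int_{B(0,\rho)}\nabla\vartheta\cdot\nabla\varsigma\,dx=\frac{m_1-1}{m_1}\int_{B(0,\rho)}\varsigma^{\frac{m_2}{m_2-1}}dx,
\]
\[
D=\frac{m_2-1}{m_2}\int_{B(0,\rho)}\vartheta^{\frac{m_1}{m_1-1}}dx+\int_{\partial B(0,\rho)}\vartheta\,\partial_\nu\varsigma\,dS,
\]
the extra boundary term in the second relation appearing because $\vartheta\neq 0$ on $\partial B(0,\rho)$. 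I would then derive the cross Pohozaev identity by integrating $(x\cdot\nabla\varsigma)(-\Delta\vartheta)+(x\cdot\nabla\vartheta)(-\Delta\varsigma)$ over $B(0,\rho)$ in two ways. The decisive point is that on $L_1$ the bulk coefficient reaches its critical value: since $\frac{m_2-1}{m_2}+\frac{m_1-1}{m_1}=1-\frac{2}{dm_2}$ on $L_1$, one computes
\[
d\Bigl(\tfrac{m_2-1}{m_2}+\tfrac{m_1-1}{m_1}\Bigr)-(d-2)=\frac{2(m_2-1)}{m_2},
\]
so the Pohozaev identity does not by itself annihilate $D$ but ties it to the boundary data on $\partial B(0,\rho)$.

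For the final step, I would feed $\vartheta'(\rho)=-\frac{d-2}{\rho}\vartheta(\rho)$ and the Gauss relation $\int_{\partial B(0,\rho)}\partial_\nu\varsigma\,dS=-\frac{m_2-1}{m_2}\int_{B(0,\rho)}\vartheta^{1/(m_1-1)}dx$ into the combination of the Pohozaev identity with the two Nehari relations, reducing everything to a rigid relation among the boundary quantities in which the constraint $m_1<d/2$ (valid on $L_1$) plays a definite role. Unwinding the reconstructed densities $f\propto\vartheta^{1/(m_1-1)}$ and $g\propto\varsigma^{1/(m_2-1)}$, the multiplier-free form of \eqref{Lane-Emden system} forces equality throughout the H\"older–HLS–interpolation chain defining $C_*$ in Lemma~\ref{estimate for H lem2}. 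But equality in the sharp HLS inequality would require $g$ to be an explicit HLS extremal profile, which is strictly positive on all of $\mathbb{R}^d$, contradicting $\varsigma\equiv 0$ outside $B(0,\rho)$; hence $\varsigma\equiv 0$ and the solution is trivial.

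The main obstacle is the nonstandard geometry: the first equation lives on all of $\mathbb{R}^d$ while the second is a Dirichlet problem on a ball, so the Pohozaev and Nehari identities carry genuinely non-vanishing boundary contributions at $\partial B(0,\rho)$, where $\varsigma$ has a kink but $\vartheta>0$. Controlling these terms, and recognizing that the \emph{broken} dilation invariance on $L_1$ (only $M_2=\|\varsigma^{1/(m_2-1)}\|_1$ is scale invariant, whereas $M_1$ is not) is precisely what drives the extremal profile to spread over the whole space and clash with the compact support forced by the ball, is where the real difficulty lies; the criticality computation above is what converts this tension into a contradiction.
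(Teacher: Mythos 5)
There is a genuine gap in the endgame of your argument. The lemma's hypotheses hand you only a nonnegative classical solution of the system with the stated integrability; there is no minimizing or extremizing structure attached to it. So the assertion that the ``multiplier-free form'' of the system ``forces equality throughout the H\"older--HLS--interpolation chain defining $C_*$'' has no basis: a solution of an Euler--Lagrange-type system need not saturate any of the inequalities used to define $C_*$, and $C_*$ itself is only known to be \emph{bounded by} $C_{\text{HLS}}$, not equal to it, so even an extremizer of $C_*$ need not be an HLS extremal. Worse, even if you could force equality in the whole chain, that chain contains the interpolation step $\|g\|_{m_1d/((d+2)m_1-d)}\leq\|g\|_1^{2/d}\|g\|_{m_2}^{1-2/d}$, and equality there requires $g$ to be essentially constant on its support --- which is perfectly compatible with $\varsigma\equiv 0$ outside $B(0,\rho)$ --- so the intended contradiction with the everywhere-positivity of HLS extremals never materializes. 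The Pohozaev bookkeeping with the non-vanishing boundary contributions on $\partial B(0,\rho)$ is likewise only sketched, not carried out, and it is not clear it closes.

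The irony is that your first paragraph already contains a complete proof, and it is essentially the paper's. Since $-\Delta\vartheta=\frac{m_1-1}{m_1}\varsigma^{1/(m_2-1)}\geq 0$ and $\vartheta$ is nonnegative and non-trivial, the strong maximum principle (the paper cites Proposition 3.4 of Serrin--Zou; your Newtonian-potential asymptotics $\vartheta(x)=c|x|^{-(d-2)}(1+o(1))$ with $c>0$ give the same thing) yields $\vartheta(x)\geq C|x|^{2-d}$ for $|x|\geq 1$. Then
\[
M_1=\int_{\mathbb{R}^d}\vartheta^{1/(m_1-1)}\,dx\;\geq\;C\int_1^{R}r^{\,d-1-\frac{d-2}{m_1-1}}\,dr\longrightarrow\infty
\quad\text{as }R\to\infty,
\]
because $d-\frac{d-2}{m_1-1}>0$ is exactly $m_1>m_c=2-2/d$, which holds on $L_1$ where $m_1\in(m_c,d/2)$. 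This contradicts $\|\vartheta^{1/(m_1-1)}\|_1=M_1<\infty$ and finishes the proof with no Pohozaev identity, no symmetry argument, and no rigidity analysis. You should discard everything after your first paragraph and draw this immediate conclusion from the decay estimate you already derived.
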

		\begin{proof}
			Let
			\begin{equation*}
				\begin{split}
					q:=\frac{1}{m_1-1}\in \left(\frac{2}{d-2},\frac{d}{d-2}\right).
				\end{split}
			\end{equation*}
			The existence/nonexistence of solutions to the general form  of Lane-Emden system has been investigated in \cite{Mitidieri1996,SZ1996,SZ1998}, for example. However, the solvability of (\ref{Lane-Emden system}) involving both whole space and bounded domains has not yet known as far as we know. We assert that there exists no non-trivial classical solution for (\ref{Lane-Emden system})  if  ${\bf{m}}$ is on $L_1$.
			
			Consider the following properties: Suppose that $\omega\in C^2(\mathbb{R}^d)$ is non-trivial and satisfies $\Delta w\leq 0,\,\,\,x\in\mathbb{R}^d$. Then
			\be\label{lower for omega}
			\begin{split}
				\omega(x)\geq C|x|^{2-d},\,\,\,\,\,|x|\geq 1
			\end{split}
			\ee
			by the strong maximum principle (see {\cite[Proposition 3.4]{SZ1996}}).  Relying on the finite of $\|\vartheta\|_q$, we have the following contradiction: For $R>1$,
			\begin{equation*}\label{contradiction for L1 norm of vartheta}
				\begin{split}
					M_1\geq \int_{B(0,R)}\vartheta^q=c_d\int^R_0\int_{\mathbb{S}^{d-1}}\vartheta^q(r,\theta)r^{d-1}dS(\theta)dr,
				\end{split}
			\end{equation*}
			where one combines with the fact that $\Delta \vartheta\leq 0 $ for $x\in \Omega_1=\mathbb{R}^d$ and  (\ref{lower for omega}) to see that
			\begin{equation*}\label{contradiction for L1 norm of vartheta2}
				\begin{split}
					M_1\geq&  C\int^R_1r^{d-1+q(2-d)}dr
					= C\int^R_1r^{\frac{dm_1+2-2d}{m_1-1}-1}dr\\
					 &=\frac{C(m_1-1)}{dm_1+2-2d}\left(R^{\frac{dm_1+2-2d}{m_1-1}}-1\right)\rightarrow\infty\,\,\,\text{as}\,\,\,R\rightarrow\infty
				\end{split}
			\end{equation*}
			due to $m_1>m_c=2-2/d$. So (\ref{Lane-Emden system}) has no non-trivial and nonnegative classical solution.\\
		\end{proof}

		\begin{theorem}{\label{nonexistence of minimiser in the case 1}}
			Let ${\bf{m}}$ be on $L_1$. For all $M_2\leq M_{2c}$, then  $\mathcal{F}$  does not admit any non-zero  minimizer in $S_{M_1}\times S_{M_2}$. 	
		\end{theorem}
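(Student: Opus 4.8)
The plan is a proof by contradiction that splits according to whether $M_2<M_{2c}$ or $M_2=M_{2c}$. For $M_2<M_{2c}$ nothing new is needed: the lower bound in (\ref{infimum inequlity for F in case 1}) has a strictly positive prefactor, so $\mathcal{F}[f,g]\ge c\,\|g\|_{m_2}^{m_2}$ with $c>0$ for every $(f,g)\in S_{M_1}\times S_{M_2}$; since $\|g\|_1=M_2>0$ forces $\|g\|_{m_2}>0$, we get $\mathcal{F}[f,g]>0$ throughout, while Lemma \ref{infimum for FEF4} gives $\inf\mathcal{F}=0$. Hence the infimum is not attained and no minimizer exists.

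The borderline value $M_2=M_{2c}$ is the substantive case, where the prefactor degenerates and one only has $\mathcal{F}\ge 0$. Assume, for contradiction, that a minimizer $(f,g)\in S_{M_1}\times S_{M_2}$ exists; by Lemma \ref{infimum for FEF4} it achieves $\mathcal{F}[f,g]=0$. By the Riesz rearrangement inequality the symmetric decreasing rearrangements do not increase $\mathcal{F}$ (they preserve $\|f\|_{m_1},\|g\|_{m_2},\|f\|_1,\|g\|_1$ while not decreasing $\mathcal{H}[f,g]$), so I may take $(f,g)$ nonnegative, radial and non-increasing. I then derive the Euler--Lagrange system for the constrained problem: testing against mass-preserving perturbations compatible with the sign constraint produces Lagrange multipliers $\mu_1,\mu_2$ with
\begin{equation*}
\frac{m_1}{m_1-1}f^{m_1-1}=\big(c_dI_2(g)+\mu_1\big)_+,\qquad
\frac{m_2}{m_2-1}g^{m_2-1}=\big(c_dI_2(f)+\mu_2\big)_+ .
\end{equation*}

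Writing $\vartheta:=\frac{m_1}{m_1-1}f^{m_1-1}$ and $\varsigma:=\frac{m_2}{m_2-1}g^{m_2-1}$ and applying $-\Delta$ (recall $-\Delta(c_dI_2(\cdot))=\mathrm{Id}$) turns these relations into a system of Lane--Emden type, with $-\Delta\vartheta$ a positive constant times $\varsigma^{1/(m_2-1)}$ and $-\Delta\varsigma$ a positive constant times $\vartheta^{1/(m_1-1)}$. Since $g$ is radial decreasing with $c_dI_2(f)\to 0$ at infinity, $\{g>0\}=\{c_dI_2(f)+\mu_2>0\}$ is a ball $B(0,\rho)$, whereas the $m_1$-species (with $m_1>m_c$) spreads over the whole space so that $\vartheta\ge 0$ is superharmonic and nontrivial on $\mathbb{R}^d$. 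After normalising the constants by a rescaling of $(\vartheta,\varsigma)$ this is precisely system (\ref{Lane-Emden system}), with $\|\vartheta^{1/(m_1-1)}\|_1=\left(\tfrac{m_1}{m_1-1}\right)^{1/(m_1-1)}M_1<\infty$ and $\|\varsigma^{1/(m_2-1)}\|_1$ a finite multiple of $M_2$; this contradicts Lemma \ref{nonexistence for LE system in case 1}.

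I expect the main obstacle to be the rigorous passage from \emph{$(f,g)$ is a minimizer} to \emph{$(\vartheta,\varsigma)$ solves (\ref{Lane-Emden system}) on the stated domains}. This requires (i) justifying the Euler--Lagrange equations in the presence of the positivity obstacle and the nonlocal self-consistent potentials, (ii) enough regularity and free-boundary information to apply $-\Delta$ and to identify $\{g>0\}$ as a ball, and above all (iii) showing that the $m_1$-species is globally supported, equivalently that $\vartheta$ is a genuine nonnegative, nontrivial \emph{superharmonic} function on all of $\mathbb{R}^d$. Point (iii) is decisive, since it is exactly superharmonicity together with nontriviality that forces the pointwise bound $\vartheta(x)\ge C|x|^{2-d}$ of (\ref{lower for omega}); with $m_1>m_c$ one has $1/(m_1-1)<d/(d-2)$, whence $\int_{\mathbb{R}^d}\vartheta^{1/(m_1-1)}=\infty$, in flat contradiction with $\vartheta^{1/(m_1-1)}\in L^1$ inherited from $f\in S_{M_1}$. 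Note that this final contradiction rests only on the $\vartheta$-half of the system, so the ball structure of $\{g>0\}$ serves mainly to match Lemma \ref{nonexistence for LE system in case 1} verbatim rather than being essential.
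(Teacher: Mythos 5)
Your overall architecture coincides with the paper's: dispose of $M_2<M_{2c}$ via the strict lower bound in (\ref{infimum inequlity for F in case 1}), then for $M_2=M_{2c}$ pass to symmetric decreasing rearrangements, derive Euler--Lagrange relations, convert them into the Lane--Emden system (\ref{Lane-Emden system}), and contradict Lemma \ref{nonexistence for LE system in case 1}. However, there is a genuine gap at precisely the point you yourself flag as ``decisive'': you write the Euler--Lagrange relation for $f$ with an undetermined multiplier $\mu_1$, namely $\frac{m_1}{m_1-1}f^{m_1-1}=\bigl(c_dI_2(g)+\mu_1\bigr)_+$, and then simply assert that the $m_1$-species ``spreads over the whole space.'' Nothing in your argument excludes $\mu_1<0$; since $c_dI_2(g)\to 0$ at infinity, $\mu_1<0$ would make $\{f>0\}$ a ball as well, and then $\vartheta$ would fail to be superharmonic on all of $\mathbb{R}^d$ (the truncation $\max(c_dI_2(g)+\mu_1,0)$ is a maximum of superharmonic functions, hence not superharmonic where the constraint is active), so the pointwise bound (\ref{lower for omega}) and the final $L^1$ contradiction are unavailable. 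The proof hinges on showing $\mu_1=0$, and your proposal does not do so.

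The paper supplies exactly this missing ingredient before touching the Euler--Lagrange equations: since $\mathcal{F}[f^*,g^*]=0$ and $M_2=M_{2c}$, the chain (\ref{inequality for H3}), consisting of the bound $c_d\mathcal{H}\le c_dC_*\|f^*\|_{m_1}\|g^*\|_1^{2/d}\|g^*\|_{m_2}^{1-2/d}$ followed by Young's inequality, must be saturated. This rigidity forces the identities (\ref{relation between u and w in the case of minimiser}) and (\ref{H in the case of minimiser2}), in particular $c_d\mathcal{H}[f,g]=\frac{m_1}{m_1-1}\|f\|_{m_1}^{m_1}$, which is precisely the statement that the Lagrange multiplier in the $f$-variation vanishes. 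Hence $(\ref{Euler-Lagrange for f and g})_1$ holds on all of $\mathbb{R}^d$ with no truncation, $f>0$ everywhere, and $\vartheta$ is a genuine global superharmonic function, after which your $L^1$ contradiction goes through. You should add this equality analysis; without it the reduction to (\ref{Lane-Emden system}) with $\Omega_1=\mathbb{R}^d$ does not follow. A secondary, fixable omission is the regularity bootstrap of the paper's Step 3, which shows $\rho<\infty$ and upgrades $(\vartheta,\varsigma)$ to classical solutions so that Lemma \ref{nonexistence for LE system in case 1} applies as stated.
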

		\begin{proof}
			The left inequality in (\ref{infimum inequlity for F in case 1}) in Lemma {\ref{infimum for FEF4}} makes sure that there exists no minimizer if $M_2<M_{2c}$. Thus we only consider $M_2=M_{2c}$ and  prove it by contradiction.\\
			{\bf{Step 1.}}  \emph{Necessary conditions for global minimizers of  $\mathcal{F}$}. We assume that  minimizers exist and try to present some basic properties of them. Suppose that $(f^*,g^*)\in S_{M_1}\times S_{M_2}$ is a minimizer of  $\mathcal{F}$ in the sense that $\mathcal{F}[f^*,g^*]=0$. Then
			\begin{equation}\label{inequality for H3}
				\begin{split}
					&\frac{1}{m_1-1}\|f^*\|^{m_1}_{m_1}+\frac{1}{m_2-1}\|g^*\|^{m_2}_{m_2}
					=c_d\mathcal{H}[f^*,g^*]\\
					&\leq c_dC_*\|f^*\|_{m_1}\|g^*\|^{2/d}_{1}\|g^*\|^{1-2/d}_{m_2}\\
					&\leq \frac{1}{m_1-1}\|f^*\|^{m_1}_{m_1}
					+\left(c_dC_{*}\right)^{\frac{m_1}{m_1-1}}\left(\frac{m_1-1}{m_1}\right)^{\frac{m_1}{m_1-1}}
					\|g^*\|^{\frac{2m_1}{d(m_1-1)}}_1\|g^*\|^{\left(1-\frac{2}{d}\right)\frac{m_1}{m_1-1}}_{m_2}\\
					&= \frac{1}{m_1-1}\|f^*\|^{m_1}_{m_1}+\frac{1}{m_2-1}M^{-\frac{2m_1}{d(m_1-1)}}_{2c}
					\|g^*\|^{\frac{2m_1}{d(m_1-1)}}_1\|g^*\|^{m_2}_{m_2}\\
					&=
					\frac{1}{m_1-1}\|f^*\|^{m_1}_{m_1}+\frac{1}{m_2-1}M^{-\frac{2m_1}{d(m_1-1)}}_{2c}
					M^{\frac{2m_1}{d(m_1-1)}}_2\|g^*\|^{m_2}_{m_2}\\
					&=\frac{1}{m_1-1}\|f^*\|^{m_1}_{m_1}+\frac{1}{m_2-1}\|g^*\|^{m_2}_{m_2}
				\end{split}
			\end{equation}
			by the HLS inequality, Young's inequality,\ the definition of $M_{2c}$ and $M_2=M_{2c}$. As a consequence of  (\ref{inequality for H3}), we obtain that
			\begin{equation}\label{relation between u and w in the case of minimiser}
				\begin{split}
					\|f^*\|^{m_1}_{m_1}=&\frac{1}{m_2-1}M^{-\frac{2m_1}{d(m_1-1)}}_{2c}
					\|g^*\|^{\frac{2m_1}{d(m_1-1)}}_1\|g^*\|^{m_2}_{m_2}\\
					=&\frac{1}{m_2-1}M^{-\frac{2m_1}{d(m_1-1)}}_{2c}M^{\frac{2m_1}{d(m_1-1)}}_2\|g^*\|^{m_2}_{m_2}\\
					=&\frac{1}{m_2-1}\|g^*\|^{m_2}_{m_2}
				\end{split}
			\end{equation}
			and
			\begin{equation*}\label{H in the case of minimiser}
				\begin{split}
					\mathcal{H}[f^*,g^*]
					 &=C_*\|f^*\|_{m_1}\|g^*\|^{2/d}_{1}\|g^*\|^{1-2/d}_{m_2}=\frac{m_1}{c_d(m_1-1)(m_2-1)}\|g^*\|^{m_2}_{m_2}.
				\end{split}
			\end{equation*}
			{\bf{Step 2.}}  \emph{The Euler-Lagrange equalities}. Let $f$ and $g$ be symmetric rearrangement of $f^*$ and $g^*$. Then  $(f,g)\in S_{M_1}\times S_{M_2}$ satisfies
			\begin{equation}\label{relation betwee f and g in the case minimiser}
				\begin{split}
					 \|f\|^{m_1}_{m_1}=\|f^*\|^{m_1}_{m_1}=\frac{1}{m_2-1}\|g^*\|^{m_2}_{m_2}=\frac{1}{m_2-1}\|g\|^{m_2}_{m_2}
				\end{split}
			\end{equation}
			and
			$$
			\mathcal{H}[f,g]\geq\mathcal{H}[f^*,g^*]
			$$
			by (\ref{relation between u and w in the case of minimiser}) and the Riesz rearrangement properties {\cite[Lemma 2.1]{Lieb83-CMP}}.  Obviously, $\mathcal{F}[f,g]=0$ and $(f,g)$ is also a minimizer of $\mathcal{F}$. Note that
			\begin{equation}\label{H in the case of minimiser2}
				\begin{split}
					c_d\mathcal{H}[f,g]
					&=\frac{m_1}{m_1-1}\|f\|^{m_1}_{m_1}=\frac{m_1}{(m_1-1)(m_2-1)}\|g\|^{m_2}_{m_2}.
				\end{split}
			\end{equation}

			Given $\Omega_{10}=\{x\in\mathbb{R}^d:f(x)=0\}$ and $\Omega_{1+}=\{x\in\mathbb{R}^d:f(x)>0\}$ and introduce $\phi_1\in C^{\infty}_0(\mathbb{R}^d)$ with $\phi_1(x)=\phi_1(-x)$ and
			\begin{equation*}
				\begin{split}	 \psi_1(x)=\frac{f(x)}{M_1}\left(\phi_1(x)-\frac{1}{M_1}\int_{\mathbb{R}^d}f(x)\phi_1(x)dx\right).
				\end{split}
			\end{equation*}
			
			Then for $f\in S_{M_1}$ and fix $\epsilon\in(0,\epsilon_0:=M_1(2\|\phi_1\|_\infty)^{-1})$, there holds
			\begin{equation*}
				\begin{split}
					\|f+\epsilon \psi_1\|_1=M_1
				\end{split}
			\end{equation*}
			and
			\begin{equation*}
				\begin{split}
					f+\epsilon \psi_1=&f\left(1+\frac{\epsilon}{M_1}\left(\phi_1(x)-\frac{1}{M_1}\int_{\mathbb{R}^d}f(x)\phi_1(x)dx
					\right)\right)\\
					\geq& f\left(1-\frac{2\|\phi_1\|_{\infty}\epsilon}{M_1}\right)\geq 0,
				\end{split}
			\end{equation*}
			which implies that $f+\epsilon \psi_1\in S_{M_1}$.  Moreover, supp $(\psi_1)\subset \overline{\Omega}_{1+}$.	 
			Then
			\begin{equation*}\label{variation of f}
				\begin{split}
					\frac{\mathcal{F}[f+\epsilon \psi_1,g]-\mathcal{F}[f,g]}{\epsilon}=\frac{1}{m_1-1}\int_{\Omega_{1+}}	 \frac{(f+\epsilon\psi_1)^{m_1}-f^{m_1}}{\epsilon}-
					\int_{\mathbb{R}^d}\mathcal{K}\ast g(x)\psi_1(x)dx.
				\end{split}
			\end{equation*}
			According to $\mathcal{F}[f+\epsilon\psi_1, g]\geq \mathcal{F}[f,g]$,
			as $\epsilon\rightarrow 0$, Lebesgue's dominated convergence theorem shows that
			\begin{equation*}
				\begin{split}
					\int_{\mathbb{R}^d}	 \left(\frac{m_1}{m_1-1}f^{m_1-1}(x)-\mathcal{K}\ast g(x)\right)\psi_1(x)dx\geq0.
				\end{split}
			\end{equation*}	
			By replacing $-\psi_1$ by $\psi_1$, one also obtains from above to see that
			\begin{equation*}
				\begin{split}
					\int_{\mathbb{R}^d}	 \left(\frac{m_1}{m_1-1}f^{m_1-1}(x)-\mathcal{K}\ast g(x)\right)\psi_1(x)dx=0,
				\end{split}
			\end{equation*}
			where
			\begin{equation*}
				\begin{split} 0=&\frac{1}{M_1}\int_{\mathbb{R}^d}\left(\frac{m_1}{m_1-1}f^{m_1-1}(x)-\mathcal{K}\ast g(x)\right)f(x)\phi_1(x)dx\\
					&-\frac{1}{M^2_1}\int_{\mathbb{R}^d}f(x)\phi_1(x)dx\cdot\int_{\mathbb{R}^d}
					\left(\frac{m_1}{m_1-1}f^{m_1}(x)-\mathcal{K}\ast f(x)g(x)\right)dx\\
					=&\frac{1}{M_1}\int_{\mathbb{R}^d}\left(\frac{m_1}{m_1-1}f^{m_1-1}(x)-\mathcal{K}\ast g(x)\right)f(x)\phi_1(x)dx
				\end{split}
			\end{equation*}
			by (\ref{H in the case of minimiser2}). For any choice of symmetric test function $\phi_1\in C^{\infty}_0(\mathbb{R}^d)$, we also obtain
			\begin{equation*}
				\begin{split}{\label{equal for f in whole domain2}}
					\frac{m_1}{m_1-1}f^{m_1-1}(x)-\mathcal{K}\ast g(x)= 0\,\,\,a.e. \,\,\,\text{in} \,\,\,\mathbb{R}^d.
				\end{split}
			\end{equation*}
			
			For $g$, arguing similarly as above and we define
			$\Omega_{20}=\{x\in\mathbb{R}^d:g(x)=0\}$ and $\Omega_{2+}=\{x\in\mathbb{R}^d:g(x)>0\}$ and introduce $\phi_2\in C^{\infty}_0(\mathbb{R}^d)$ with $\phi_2(x)=\phi_2(-x)$ and \begin{equation*}
				\begin{split}	 \psi_2(x)=\frac{g(x)}{M_2}\left(\phi_2(x)-\frac{1}{M_2}\int_{\mathbb{R}^d}g(x)\phi_2(x)dx\right).
				\end{split}
			\end{equation*}
			Then for $g\in S_{M_2}$ and fix $\epsilon\in(0,M_2(2\|\phi_2\|_\infty)^{-1})$, there holds
			$g+\epsilon \psi_2\in S_{M_2}$. Then
			\begin{equation*}\label{variation of g}
				\begin{split}
					\frac{\mathcal{F}[f,g+\epsilon \psi_2]-\mathcal{F}[f,g]}{\epsilon}=&\frac{1}{m_2-1}\int_{\Omega_{2+}}	 \frac{(g+\epsilon\psi_2)^{m_2}-g^{m_2}}{\epsilon}dy\\
					&-\int_{\mathbb{R}^d}\mathcal{K}\ast f(y)\psi_2(y)dy,
				\end{split}
			\end{equation*}
			where by Lebesgue's dominated convergence theorem again and replacing $-\psi_2$ by $\psi_2$,  it follows that
			\begin{equation*}
				\begin{split}
					\int_{\mathbb{R}^d}	 \left(\frac{m_2}{m_2-1}g^{m_2-1}(y)-\mathcal{K}\ast f(y)\right)\psi_2(y)dy=0.
				\end{split}
			\end{equation*}
			Then (\ref{relation betwee f and g in the case minimiser})
			and (\ref{H in the case of minimiser2}) imply that
			\begin{equation*}
				\begin{split} 0=&\frac{1}{M_2}\int_{\mathbb{R}^d}\left(\frac{m_2}{m_2-1}g^{m_2-1}(y)-\mathcal{K}\ast f(y)\right)g(y)\phi_2(y)dy\\
					&-\frac{1}{M^2_2}\int_{\mathbb{R}^d}g(y)\phi_2(y)dy\cdot\int_{\mathbb{R}^d}
					\left(\frac{m_2}{m_2-1}g^{m_2}(y)-\mathcal{K}\ast f(y)g(y)\right)dy\\
					=&\frac{1}{M_2}\int_{\mathbb{R}^d}\left(\frac{m_2}{m_2-1}g^{m_2-1}(y)-\mathcal{K}\ast f(y)\right)g(y)\phi_2(y)dy\\
					&+\frac{2m_1}{M^2_2(d-2m_1)}\|g\|^{m_2}_{m_2}\int_{\mathbb{R}^d}g(y)\phi_2(y)dy\\
					=&\frac{1}{M_2}\int_{\mathbb{R}^d}\left(\frac{m_2}{m_2-1}g^{m_2-1}(y)-\mathcal{K}\ast f(y)
					+\frac{2m_1\|g\|^{m_2}_{m_2}}{M_2(d-2m_1)}\right)g(y)\phi_2(y)dy
				\end{split}
			\end{equation*}
			on $L_1$. Therefore,
			\be\label{equality for g in positive domain}
			\begin{split} \frac{m_2}{m_2-1}g^{m_2-1}-\mathcal{K}\ast f+\frac{2m_1}{M_2(d-2m_1)}\|g\|^{m_2}_{m_2}
				=0\,\,\,a.e. \,\,\,\text{in} \,\,\,\overline{\Omega}_{2+}.
			\end{split}
			\ee
			where we extend above equality to the whole space in the sense that
			\begin{equation*}\label{equality for w in whole case}
				\begin{split}
					\frac{m_2}{m_2-1}g^{m_2-1}=\left(\mathcal{K}\ast f-\frac{2m_1}{M_2(d-2m_1)}\|g\|^{m_2}_{m_2}\right)_{+}\,\,\,a.e. \,\,\,\text{in} \,\,\,\mathbb{R}^d.
				\end{split}
			\end{equation*}
			Since $g$ is radially symmetric and non-increasing, there exists $\rho\in(0,\infty]$ such that
			\begin{equation*}
				\begin{split}
					\Omega_{2+}\subset B(0,
					\rho)\,\,\,\text{and}\,\,\,\Omega_{20}\subset\mathbb{R}^d\backslash B(0,\rho),
				\end{split}
			\end{equation*}
			and from (\ref{equality for g in positive domain}) we obtain
			\begin{equation*}\label{equality for w in ball}
				\begin{split}
					\frac{m_2}{m_2-1}g^{m_2-1}=\mathcal{K}\ast f-\frac{2m_1}{M_2(d-2m_1)}\|g\|^{m_2}_{m_2}\,\,\,a.e. \,\,\,\text{in} \,\,\,B(0,\rho).
				\end{split}
			\end{equation*}
			Then such symmetric non-increasing minimizer $(f,g)\in S_{M_1}\times S_{M_2}$ of $\mathcal{F}$ satisfies the following Euler-Lagrange equalities
			\begin{equation}\label{Euler-Lagrange for f and g}
				\begin{cases}
						&\frac{m_1}{m_1-1}f^{m_1-1}(x)= \mathcal{K}\ast g(x)\,\,\,a.e. \,\,\,\text{in} \,\,\,\mathbb{R}^d,\,\,\,\\[3mm]
						&\frac{m_2}{m_2-1}g^{m_2-1}(x)=\mathcal{K}\ast f(x)-\frac{2m_1}{M_2(d-2m_1)}\|g\|^{m_2}_{m_2}\,\,\,a.e. \,\,\,\text{in} \,\,\,B(0,\rho).
				\end{cases}
			\end{equation}
			{\bf{Step 3.}}	\emph{The regularities of minimizer}. From $(\ref{Euler-Lagrange for f and g})_1$, one invokes the HLS inequality in Lemma {\ref{HLS inequality}} to see for $g\in L^1(\mathbb{R}^d)\cap L^{m_2}(\mathbb{R}^d)$  that
			\begin{equation*}
				\begin{split}{\label{increase of regularity of f}}
					f\in L^{p}(\mathbb{R}^d)\,\,\,\,\text{with}\,\,\,p\in \left[\frac{d(m_1-1)}{d-2},\frac{d(m_1-1)m_2}{d-2m_2}\right],
				\end{split}
			\end{equation*}
			where once more using the HLS inequality again, one concludes that
			\begin{equation*}\label{increase of regularity of Kf}
				\mathcal{K}\ast f\in L^{q}(\mathbb{R}^d)\,\,\,\text{with}\,\,\,q\in
				\begin{cases}\left[\frac{d(m_1-1)}{d-2m_1},
					\frac{d(m_1-1)m_2}{d-2m_1m_2}\right],\,\,\,&\text{if}\,\,\,d>2m_1m_2,\\[0.2cm]
					\left[\frac{d(m_1-1)}{d-2m_1},
					\infty\right),\,\,\,&\text{if}\,\,\,d\leq 2m_1m_2.\\[0.2cm]
				\end{cases}
			\end{equation*}
			In particular, $\mathcal{K}\ast f\in L^{\frac{m_2}{m_2-1}}(\mathbb{R}^d)$ since  $m_1+m_2=2m_1/d+m_1m_2\leq 2m_1m_2/d+m_1m_2$ and
			$$
			\frac{m_2}{m_2-1}\in\left[\frac{d(m_1-1)}{d-2m_1},\frac{d(m_1-1)m_2}{\left(d-2m_1m_2\right)_{+}}
			\right).
			$$
			Consequently, $g^{m_2-1}\in L^{\frac{m_2}{m_2-1}}(\mathbb{R}^d)$, which excludes $\rho=\infty$ in $(\ref{Euler-Lagrange for f and g})_2$. Hence $\rho<\infty$ and
			\begin{equation*} \label{equality in the ball2}
				\frac{m_2}{m_2-1}g^{m_2-1}(x)=\begin{cases}
					\mathcal{K}\ast f(x)-\frac{2m_1}{M_2(d-2m_1)}\|g\|^{m_2}_{m_2},\,\,\,&\text{if}\,\,\,|x|<\rho, \\[0.2cm]
					0,\,\,\,&\text{if}\,\,\,|x|>\rho
				\end{cases}
			\end{equation*}
			by the monotonicity of $g$. Moreover, a bootstrap argument ensures that
			\begin{equation*}
				\begin{split}
					(f,g)\in (L^{\infty}(\mathbb{R}^d))^2.
				\end{split}
			\end{equation*}
			Letting $\vartheta:=f^{m_1-1}$ and $\varsigma:=g^{m_2-1}$, we readily infer from $(\ref{Euler-Lagrange for f and g})_1$ that
			\begin{equation*} \label{Euler-Lagrange for f2}
				\vartheta(x)=\frac{m_1-1}{m_1} \mathcal{K}\ast\varsigma^{\frac{1}{m_2-1}}(x)\,\,\,a.e. \,\,\,\text{in} \,\,\,\mathbb{R}^d,\,\,\,
			\end{equation*}
			and invoke {\cite[Theorem 9.9]{GT83}} to have $\vartheta\in W^{2,r}(B(0,\rho))$ with $r\in (m_1,\infty)$ and $-\Delta \vartheta=\frac{m_1-1}{m_1}\varsigma^{\frac{1}{m_2-1}}$\,\,a.e.\,\,$x\in \mathbb{R}^d$. Furthermore, from the expression for $\varsigma$ such as
			
			\begin{equation*} \label{equality in the ball3}
				\varsigma(x)=
				 \frac{m_2-1}{m_2}\mathcal{K}\ast\vartheta^{\frac{1}{m_1-1}}(x)-\frac{2m_1(m_2-1)}{m_2M_2(d-2m_1)}\|\varsigma\|^{m_2/(m_2-1)}_{m_2/(m_2-1)},\,\,\,x\in B(0,\rho),
			\end{equation*}
			by means of the regularity of $\vartheta$ and {\cite[Lemma 4.2]{GT83}}, we obtain $\varsigma\in C^2(B(0,\rho))$ with $-\Delta\varsigma=\frac{m_2-1}{m_2}\vartheta^{\frac{1}{m_1-1}}$ in $B(0,\rho)$ and  {\cite[Lemma 4.1]{GT83}} ensures that $\varsigma\in C^1(\mathbb{R}^d)$. Then $\varsigma(x)=0$ if $|x|=\rho$ and $\varsigma$ is a classical solution to
			\be{\label{class solution for varsigma}}
			\begin{split}
				\begin{cases}
					&-\Delta\varsigma(x)=\frac{m_2-1}{m_2}\vartheta^{\frac{1}{m_1-1}}(x),\,\,\,\,x\in B(0,\rho),\\
					&\varsigma(x)=0,\,\,\,\,x\in \partial B(0,\rho).
				\end{cases}
			\end{split}
			\ee
			With the smoothness of $\varsigma$, {\cite[Lemma 4.2]{GT83}} applies so as to assert that  $\vartheta\in C^2(\mathbb{R}^d)$ and
			\be{\label{class solution for theta}}
			\begin{split}
				-\Delta\vartheta(x)=\frac{m_1-1}{m_1}\varsigma^{\frac{1}{m_2-1}}(x)\,
				,\,\,\,\,x\in \mathbb{R}^d.
			\end{split}
			\ee
			{\bf{Step 4.}}  \emph{Contradiction}. (\ref{class solution for varsigma})-(\ref{class solution for theta}) consist of the Lane-Emden system (\ref{Lane-Emden system}). However, it has been proved that there exists no non-trivial classical solution of (\ref{Lane-Emden system}) if ${\bf{m}}$ is on $L_1$, which makes a contradiction.

		\end{proof}
		
		\begin{remark}
			Let ${\bf{m}}$ be on $L_2$, there exists no non-zero minimizer for $\mathcal{F}$  in $S_{M_1}\times S_{M_2}$ with $M_1\leq M_{1c}$.
		\end{remark}

		\section{The global existence}
		
		This section deals with the global solvability of (\ref{TSTC}) in subcritical case. We first present a local existence and extensibility criterion of free energy solution to (\ref{TSTC}). Note that this theorem also provides simultaneous blow-up argument in Section 5.
		\begin{theorem}{\label{local existence theorem}}
			Let $m_1,m_2> 1$. Under assumption (\ref{intial data for u and w}) on the initial data $(u_0,w_0)$ with $\|u_0\|_1=M_1,\|w_0\|_1=M_2$, then there exists $T_{\max}\in(0,\infty]$  and  a free energy solution $(u,w)$ over $\mathbb{R}^d\times(0,T_{\max}) $ of (\ref{TSTC}) such that either $T_{\max}=\infty$ or $T_{\max}<\infty$ and
			\be\label{extension criterion1}
			\lim_{t\rightarrow T_{\max}}\left(\|u(\cdot,t)\|_{\infty}+\|w(\cdot,t)\|_{\infty}\right)=\infty.
			\ee
			Moreover, let ${\bf{m}}$ be subcritical or critical. Then if $T_{\max}<\infty$,
			\be\label{extension criterion2}
			\lim_{t\rightarrow T_{\max}}\|u(\cdot,t)\|_{m_1}=\lim_{t\rightarrow T_{\max}}\|w(\cdot,t)\|_{m_2}=\infty.
			\ee
		\end{theorem}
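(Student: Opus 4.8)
The plan is to transfer the local theory from the regularization (\ref{TSTC approxiamation}) to the limit, and then to pin down the maximal time through the two critical norms $\|u\|_{m_1}$ and $\|w\|_{m_2}$. For \textbf{local existence and the $L^\infty$ criterion}, I would start from the strong solutions $(u_\epsilon,w_\epsilon)$ of (\ref{TSTC approxiamation}) provided by Lemma \ref{Local existence for AS}, whose existence time is bounded below in terms of $\|u^\epsilon_0\|_\infty+\|w^\epsilon_0\|_\infty$. For data bounded in $L^\infty$ a short-time $L^\infty$ estimate, uniform in $\epsilon$ and depending only on $\|u_0\|_\infty+\|w_0\|_\infty$, can be produced by the usual argument for porous-medium equations with drift; on that interval the compactness scheme of Lemmas \ref{global weak solution exist}--\ref{proof of FES} yields a free energy solution. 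Declaring $T_{\max}$ to be the largest time up to which the $L^\infty$ norm stays finite, maximality gives (\ref{extension criterion1}): were $\|u\|_\infty+\|w\|_\infty$ to remain bounded along a sequence $t\uparrow T_{\max}$, one could restart from a bounded state and extend past $T_{\max}$.

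\textbf{Reducing blow-up to the critical norms.} The key elementary observation is that subcriticality or criticality of ${\bf{m}}$ implies the hypothesis (\ref{connection between m_1 and m_2}) of Lemma \ref{the condition for global existence}, since $m_1,m_2>1$ forces $2m_1m_2/d>\max\{2m_1/d,2m_2/d\}$. Thus, should $\|u(t)\|_{m_1}$ and $\|w(t)\|_{m_2}$ stay bounded on $(0,T)$, the a priori estimates of Lemma \ref{the condition for global existence} would bound all $L^r$ norms ($r<\infty$) and the Moser iteration in Lemma \ref{global weak solution exist} would bound $L^\infty$. In view of (\ref{extension criterion1}), $T_{\max}<\infty$ therefore forces at least one of $\|u\|_{m_1}$, $\|w\|_{m_2}$ to blow up.

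\textbf{Simultaneity via the free energy.} It then remains to prove that the two norms blow up together, which is where the coupling enters. Setting $a=\|u(t)\|_{m_1}$, $b=\|w(t)\|_{m_2}$, the dissipation inequality (\ref{inequality for free energy2}) gives
\[
\frac{a^{m_1}}{m_1-1}+\frac{b^{m_2}}{m_2-1}\le \mathcal{F}[u_0,w_0]+c_d\,|\mathcal{H}[u,w]|.
\]
On the critical set I would bound $\mathcal{H}$ by the HLS variants of Lemma \ref{estimate for H lem2}, namely $\|u\|_{m_1}\|w\|_1^{2/d}\|w\|_{m_2}^{1-2/d}$ on $L_1$, its mirror image on $L_2$, and $\|u\|_{m_c}^{m_c/2}\|w\|_{m_c}^{m_c/2}$ (times fixed powers of the conserved masses) at ${\bf{I}}$; in the subcritical regime the Young-type split of Lemma \ref{estimate for H lem} plays the same role. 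In each case the cross term is, in each variable separately, of strictly lower order than the self-terms $a^{m_1}$ and $b^{m_2}$ — note $1-2/d<1<m_2$, and at the borderline the relevant Young exponents are exactly $m_1$ and $m_2$. Hence if $b$ stayed bounded the displayed inequality would read $a^{m_1}\lesssim 1+a^{\gamma}$ with $\gamma<m_1$, forcing $a$ bounded, and symmetrically. Combined with the previous step, boundedness of either critical norm would bound the other and then $L^\infty$, contradicting (\ref{extension criterion1}); so both norms diverge as $t\uparrow T_{\max}$, which is (\ref{extension criterion2}).

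\textbf{The hard part.} I expect the crux to be the simultaneity at criticality, where $\mathcal{H}$ carries exactly the borderline homogeneity: the argument only closes because freezing one species renders its contribution \emph{strictly} subcritical in the other, so the superlinearity of $a^{m_1}$ and $b^{m_2}$ absorbs it with no smallness assumption on $M_1,M_2$. A subsidiary annoyance is that Lemma \ref{estimate for H lem} needs $m<d/2$, so the ranges $m_1\ge d/2$ or $m_2\ge d/2$ would have to be dispatched by a separate, easier estimate in which the strong diffusion dominates the singular kernel outright.
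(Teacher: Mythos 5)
Your proposal follows the paper's proof essentially step for step: local existence and (\ref{extension criterion1}) by approximation from (\ref{TSTC approxiamation}); the observation that sub/criticality implies (\ref{connection between m_1 and m_2}), so bounded critical norms yield $L^\infty$ control via Lemmas \ref{the condition for global existence}--\ref{proof of FES}; and mutual control of $\|u\|_{m_1}$ and $\|w\|_{m_2}$ through the free energy inequality combined with the HLS estimates, including the interpolation detour needed when $m_1\ge d/2$ that you correctly flag. The only cosmetic difference is that the paper runs the Young-type split of Lemma \ref{estimate for H lem} with a tunable parameter $\eta$ (small, then large) to obtain the two-sided bound $C\|w\|_{m_2}^{m_2}-C\le\|u\|_{m_1}^{m_1}\le C'\|w\|_{m_2}^{m_2}+C'$ directly, rather than your freeze-one-variable contradiction; the two arguments are equivalent.
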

		\begin{proof}
			For $(u_0,w_0)$ satisfying (\ref{intial data for u and w}), local existence and (\ref{extension criterion1}) can be proved by approximation arguments (similar to those in the proof of Theorem 1.1 in \cite{Sugiyama06-DIE} for instance). To see (\ref{extension criterion2}), since the solution is globally solved if both $\|u\|_{m_1}$ and $\|w\|_{m_2}$ are uniform bound in subcritical or critical case due to Lemmas \ref{the condition for global existence}-\ref{proof of FES}, then it is sufficient to show that the two terms $\|u\|_{m_1}$ and $\|w\|_{m_2}$ are governed  by each other with some constants. 	
			
			Since
			\be\label{inequality for free energy}
			\begin{split}
				\frac{1}{m_1-1}\int_{\mathbb{R}^d}u^{m_1}+\frac{1}{m_2-1}\int_{\mathbb{R}^d}w^{m_2}\leq c_d\mathcal{H}[u,w]+\mathcal{F}[u_0,w_0],
			\end{split}
			\ee
			then it needs to control the term $\mathcal{H}$ at the right side of (\ref{inequality for free energy}). For $m\in\left(1,d/2\right)$ satisfying (\ref{connection between m_1 and m_21}),  Lemma \ref{estimate for H lem} yields that
			\begin{equation}\label{inequality for H}
				\begin{split}
					 |\mathcal{H}[f,g]|&\leq\eta\|f\|^{m}_{{m}}+C\eta^{-\frac{1}{m-1}}\|g\|^{\frac{mm_2+2mm_2/d-m-m_2}{(m-1)(m_2-1)}}_1\|g\|^{\frac{m_2-2mm_2/d}{(m-1)(m_2-1)}}_{m_2}
				\end{split}
			\end{equation}
			for some $f\in L^{m}(\mathbb{R}^d)$ and $g\in L^{1}(\mathbb{R}^d)\cap L^{m_2}(\mathbb{R}^d)$ with $\eta>0$.
			If $m_1<d/2$, choosing $m=m_1$ in (\ref{inequality for H}), then
			\begin{equation*}
				\begin{split}
					\frac{1}{m_1-1}&\int_{\mathbb{R}^d}u^{m_1}+\frac{1}{m_2-1}\int_{\mathbb{R}^d}w^{m_2}\\
					\leq& c_d\eta\|u\|^{m_1}_{{m_1}}
					 +c_dC\eta^{-\frac{1}{m_1-1}}M^{\frac{m_1m_2+2m_1m_2/d-m_1-m_2}{(m_1-1)(m_2-1)}}_2\|w\|^{\frac{m_2-2m_1m_2/d}{(m_1-1)(m_2-1)}}_{m_2}
					+\mathcal{F}[u_0,w_0]\\
					\leq& c_d\eta\|u\|^{m_1}_{{m_1}}
					+c_dC\eta^{-\frac{1}{m_1-1}}\|w\|^{m_2}_{m_2}+C
				\end{split}
			\end{equation*}
			by Young's inequality, since
			\begin{equation*}
				\begin{split}
					\frac{m_2-2m_1m_2/d}{(m_1-1)(m_2-1)}\leq m_2
				\end{split}
			\end{equation*}	
			if $m_1m_2+2m_1/d\geq m_1+m_2$ holds. Taking $\eta$ small enough, we have
			\begin{equation}{\label{u controlled by w}}
				\begin{split}
					\|u(t)\|^{m_1}_{m_1}\leq C\|w(t)\|^{m_2}_{m_2}+C\,\,\,\,\text{for}\,\,\,\,t\in(0,T_{\max})
				\end{split}
			\end{equation}
			and  if $\eta$ is sufficiently large, we see that
			\begin{equation}{\label{w controlled by u}}
				\begin{split}
					\|w(t)\|^{m_2}_{m_2}\leq C'\|u(t)\|^{m_1}_{m_1}+C'\,\,\,\,\text{for}\,\,\,\,t\in(0,T_{\max}).
				\end{split}
			\end{equation}
			Therefore,  (\ref{extension criterion2}) holds by (\ref{extension criterion1}), (\ref{u controlled by w})-(\ref{w controlled by u}).

			However, if $m_1\geq d/2$,  we pick $m\in\left(1,d/2\right)$ such that
			\begin{equation*}
				\begin{split}
					\frac{m_2}{m_2+2/d-1}< m<d/2,	
				\end{split}
			\end{equation*}
			and next take interpolation inequality to find that
			\begin{equation*}
				\begin{split}
					\|u\|^m_{m}\leq& \|u\|^{\frac{m_1-m}{m_1-1}}_1\|u\|^{\frac{m_1(m-1)}{m_1-1}}_{m_1}.
				\end{split}
			\end{equation*}
			Upon
			\begin{equation*}
				\begin{split}
					\frac{m_2-2mm_2/d}{(m-1)(m_2-1)}<m_2,	
				\end{split}
			\end{equation*}
			then (\ref{inequality for H}) implies that
			\begin{equation}\label{estimate for H2}
				\begin{split}
					|\mathcal{H}[u,w]|
					&\leq\eta \|u\|^{\frac{m_1-m}{m_1-1}}_1\|u\|^{\frac{m_1(m-1)}{m_1-1}}_{m_1}+C\eta^{-\frac{1}{m-1}}
					\|w\|^{\frac{mm_2+2mm_2/d-m-m_2}{(m-1)(m_2-1)}}_1\|w\|^{\frac{m_2-2mm_2/d}{(m-1)(m_2-1)}}_{m_2}\\
					&=\eta M^{\frac{m_1-m}{m_1-1}}_1\|u\|^{\frac{m_1(m-1)}{m_1-1}}_{m_1}+C\eta^{-\frac{1}{m-1}}M^{\frac{mm_2+2mm_2/d-m-m_2}{(m-1)(m_2-1)}}_2\|w\|^{\frac{m_2-2mm_2/d}{(m-1)(m_2-1)}}_{m_2}\\
					&\leq \eta \|u\|^{m_1}_{m_1}+\eta^{-\frac{1}{m-1}}\|w\|^{m_2}_{m_2}+C\\
				\end{split}
			\end{equation}
			with $\|u\|_1=M_1$ and $\|w\|_1=M_2$. Hence  (\ref{u controlled by w})-(\ref{w controlled by u}) are valid by picking suitable $\eta>0$. By the same token, the  case $m_1m_2+2m_2/d\geq m_1+m_2$ is also true for both $m_2<d/2$ and $m_2\geq d/2$. The proof is finished.
			
		\end{proof}
		
		The global existence result in subcritical case is the subject of our next theorem.
		\begin{theorem}{\label{global existence of more general case}}
			Let  $m_1,m_2>1$. Suppose that the initial data $(u_0,w_0)$ with $\|u_0\|_1=M_1,\|w_0\|_1=M_2$ fulfills (\ref{intial data for u and w}). Then if
			${\bf{m}}$ is subcritical, (\ref{TSTC}) has a global free energy solution given in Definition \ref{free energy solution}. 	
		\end{theorem}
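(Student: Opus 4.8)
The plan is to run a contradiction argument built on the local theory already in place: Theorem \ref{local existence theorem} produces a free energy solution $(u,w)$ on $\mathbb{R}^d\times(0,T_{\max})$, and because $\mathbf{m}$ is subcritical (hence in particular critical-or-subcritical), the extension criterion (\ref{extension criterion2}) guarantees that a finite $T_{\max}$ would force both $\|u(\cdot,t)\|_{m_1}$ and $\|w(\cdot,t)\|_{m_2}$ to blow up as $t\to T_{\max}$. So it suffices to show these two norms remain bounded on every finite time interval, and I would assume $T_{\max}<\infty$ to derive a contradiction.

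The engine is the free energy dissipation built into Definition \ref{free energy solution}, which yields $\mathcal{F}[u(t),w(t)]\le\mathcal{F}[u_0,w_0]$ and therefore inequality (\ref{inequality for free energy}); the whole point is to control the coupling term $c_d\mathcal{H}[u,w]$ on its right-hand side. Without loss of generality I would treat the subcritical case in the form $m_1m_2+2m_1/d>m_1+m_2$, the complementary alternative $m_1m_2+2m_2/d>m_1+m_2$ being identical after exchanging the roles of $u$ and $w$ and using (\ref{Young inequality for H22}) in place of (\ref{Young inequality for H1}). The crucial algebraic observation is that this strict subcritical inequality is exactly equivalent to the strict bound
$$
\frac{m_2-2m_1m_2/d}{(m_1-1)(m_2-1)}<m_2
$$
on the $\|w\|_{m_2}$-exponent produced by Lemma \ref{estimate for H lem}. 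Applying (\ref{Young inequality for H1}) with $m=m_1$ when $m_1<d/2$, the mass conservation $\|w\|_1=M_2$, and Young's inequality, the strictness of that exponent is precisely what lets Young's inequality generate an absolute constant together with arbitrarily small prefactors, so that for suitable parameters
$$
c_d\,\mathcal{H}[u,w]\le\frac{1}{2(m_1-1)}\|u\|_{m_1}^{m_1}+\frac{1}{2(m_2-1)}\|w\|_{m_2}^{m_2}+C.
$$

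Inserting this into (\ref{inequality for free energy}) and absorbing the two top-order terms into the left-hand side gives a uniform-in-time bound
$$
\frac{1}{2(m_1-1)}\|u(t)\|_{m_1}^{m_1}+\frac{1}{2(m_2-1)}\|w(t)\|_{m_2}^{m_2}\le C\qquad\text{for all }t\in(0,T_{\max}),
$$
with $C$ independent of $t$, which contradicts (\ref{extension criterion2}) and forces $T_{\max}=\infty$. To finish, I would note that the subcritical inequality implies condition (\ref{connection between m_1 and m_2}), since $m_2>1$ gives $2m_1m_2/d>2m_1/d$; with $\|u\|_{m_1}$ and $\|w\|_{m_2}$ now globally bounded, Lemmas \ref{the condition for global existence}--\ref{proof of FES} upgrade $(u,w)$ to a global free energy solution in the sense of Definition \ref{free energy solution}. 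The only mildly delicate point is the degenerate-diffusion regime $m_1\ge d/2$, where Lemma \ref{estimate for H lem} does not apply directly; there I would first interpolate $\|u\|_m^m\le\|u\|_1^{(m_1-m)/(m_1-1)}\|u\|_{m_1}^{m_1(m-1)/(m_1-1)}$ through an auxiliary exponent $m\in\bigl(\frac{m_2}{m_2+2/d-1},\frac d2\bigr)$, exactly as carried out in the proof of Theorem \ref{local existence theorem}, so this amounts to a routine adaptation rather than a new obstacle.
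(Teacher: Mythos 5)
Your proposal is correct and follows essentially the same route as the paper: the free energy inequality combined with Lemma \ref{estimate for H lem}, the observation that strict subcriticality is exactly the strict exponent bound $\frac{m_2-2m_1m_2/d}{(m_1-1)(m_2-1)}<m_2$ allowing Young's inequality to absorb both top-order terms, uniform bounds on $\|u\|_{m_1}$ and $\|w\|_{m_2}$, the interpolation through an auxiliary $m\in\bigl(\frac{m_2}{m_2+2/d-1},\frac d2\bigr)$ when $m_1\ge d/2$, and the extensibility criterion of Theorem \ref{local existence theorem}. The only cosmetic difference is that you phrase the conclusion as a contradiction with (\ref{extension criterion2}) while the paper invokes the criterion directly, which is the same argument.
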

		
		\begin{remark}
			If $m_1\geq d/2$ or $m_2\geq d/2$, the conclusion in Theorem  \ref{global existence of more general case} holds for all $m_2>1$ or $m_1>1$.
		\end{remark}
		\begin{proof}
			In the  case
			$m_1m_2+2m_1/d>m_1+m_2$ and $m_1<d/2$,  since
			$\frac{m_2-2m_1m_2/d}{(m_1-1)(m_2-1)}<m_2$, then Lemma {\ref{estimate for H lem}} warrants that
			\begin{equation*}
				\begin{split}
					|\mathcal{H}[u,w]|
					 &\leq\frac{1}{2c_d(m_1-1)}\|u\|^{m_1}_{{m_1}}+C\|w\|^{\frac{m_1m_2+2m_1m_2/d-m_1-m_2}{(m_1-1)(m_2-1)}}_1\|w\|^{\frac{m_2-2m_1m_2/d}{(m_1-1)(m_2-1)}}_{m_2}\\
					&\leq \frac{1}{2c_d(m_1-1)}\|u\|^{m_1}_{{m_1}}+\frac{1}{2c_d(m_2-1)}\|w\|^{m_2}_{{m_2}}+C
				\end{split}
			\end{equation*}	
			by Young's inequality. Then substituting (\ref{inequality for free energy}) into above, we have
			\begin{equation*}
				\begin{split}
					\frac{1}{m_1-1}\int_ {\mathbb{R}^d}u^{m_1}dx+&\frac{1}{m_2-1}\int_ {\mathbb{R}^d}w^{m_2}dx\\
					\leq&\frac{1}{2(m_1-1)}\int_ {\mathbb{R}^d}u^{m_1}dx+\frac{1}{2(m_2-1)}\int_ {\mathbb{R}^d}w^{m_2}dx+C.
				\end{split}
			\end{equation*}
			As a corollary,
			\be{\label{boundedness for um1 and wm2}}
			\begin{split}
				\|u\|_{m_1}\leq C\,\,\,\,\text{and}\,\,\,\,\|w\|_{m_2}\leq C.
			\end{split}
			\ee
			If $m_1\geq \frac{d}{2}$, we
			recalculate (\ref{estimate for H2}) carefully and also have (\ref{boundedness for um1 and wm2}), in which the global existence of free energy solution is immediate from Theorem \ref{local existence theorem}.  The other case $m_1m_2+2m_2/d> m_1+m_2$ is similar.
		\end{proof}

		Also on the critical lines, we obtain  global existence results reading as
		\begin{theorem}{\label{global existence for subcritical case}}
			Let ${\bf{m}}$ be on $L_1$, and let $(u,w)$ be a free energy solution of (\ref{TSTC}) with $(u_0,w_0)$ satisfying  (\ref{intial data for u and w}) on $[0,T_{\max})$ with $T_{\max}$ given in Theorem {\ref{local existence theorem}}. If
			\be\label{subcritical condition}
			\begin{split}
				M_2<M_{2c},
			\end{split}
			\ee
			then $T_{\max}=\infty$.
			The subcritical condition (\ref{subcritical condition}) will be replaced by $M_1<M_{1c}$ on $L_2$.
			Moreover, if ${\bf{m}}$ is ${\bf{I}}$, one has $T_{\max}=\infty$ if $M_1M_2<M^2_c$.
		\end{theorem}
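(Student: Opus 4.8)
The plan is to convert the free energy dissipation into a uniform-in-time bound on a single critical norm and then contradict the blow-up dichotomy \eqref{extension criterion2}. Since $(u,w)$ is a free energy solution, Definition \ref{free energy solution} gives $\mathcal{F}[u(t),w(t)]\le \mathcal{F}[u_0,w_0]$ for every $t\in(0,T_{\max})$; the initial regularity \eqref{intial data for u and w} guarantees $u_0\in L^{m_1}$, $w_0\in L^{m_2}$ and, via Lemma \ref{estimate for H lem2}, that $\mathcal{H}[u_0,w_0]<\infty$, so the right-hand side is a finite constant. Written out, this is exactly \eqref{inequality for free energy}, and the whole task reduces to controlling $c_d\mathcal{H}[u,w]$.

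I would treat ${\bf m}$ on $L_1$ first, where $\|w(t)\|_1=M_2$ is conserved. Reusing the computation in the proof of Lemma \ref{infimum for FEF4} (the definition of $C_*$ followed by Young's inequality) gives
\[
c_d|\mathcal{H}[u,w]|\le \frac{1}{m_1-1}\|u\|^{m_1}_{m_1}+\left(c_dC_*\right)^{\frac{m_1}{m_1-1}}\left(\frac{m_1-1}{m_1}\right)^{\frac{m_1}{m_1-1}}M_2^{\frac{2m_1}{d(m_1-1)}}\|w\|^{m_2}_{m_2}.
\]
As verified there, the definition of $M_{2c}$ is tuned precisely so that the coefficient of $\|w\|^{m_2}_{m_2}$ equals $\frac{1}{m_2-1}\left(M_2/M_{2c}\right)^{2m_1/(d(m_1-1))}$. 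Inserting this into \eqref{inequality for free energy}, the two copies of $\frac{1}{m_1-1}\|u\|^{m_1}_{m_1}$ cancel and leave
\[
\frac{1}{m_2-1}\Bigl(1-\left(M_2/M_{2c}\right)^{\frac{2m_1}{d(m_1-1)}}\Bigr)\|w\|^{m_2}_{m_2}\le \mathcal{F}[u_0,w_0].
\]
Since $M_2<M_{2c}$ makes the parenthesis a fixed positive number, this produces $\sup_{t<T_{\max}}\|w(t)\|_{m_2}<\infty$.

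With this bound I then invoke \eqref{extension criterion2} of Theorem \ref{local existence theorem}: on a critical line, $T_{\max}<\infty$ would force $\|w(\cdot,t)\|_{m_2}\to\infty$, contradicting the uniform bound, so $T_{\max}=\infty$. The line $L_2$ is entirely symmetric—swap the roles of $u$ and $w$, use $C_\star$ together with the conserved mass $\|u\|_1=M_1$, and obtain a uniform bound on $\|u\|_{m_1}$ whenever $M_1<M_{1c}$. At the intersection point ${\bf I}$ both masses are conserved, and the estimate for $\mathcal{H}$ at ${\bf I}$ from Lemma \ref{infimum for FEF4} leaves the coefficient $\frac{M_1^{2/d}M_2^{2/d}}{M_c^{4/d}}$ in front of $\|w\|^{m_c}_{m_c}$ after the $\|u\|^{m_c}_{m_c}$ terms cancel; the condition $M_1M_2<M_c^2$ makes this strictly below $1$, and the same argument yields a bound on $\|w\|_{m_c}$ and hence $T_{\max}=\infty$.

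The only genuinely delicate point is the exact matching of the sharp constants $C_*,C_\star,C_c$ with the threshold masses $M_{2c},M_{1c},M_c$, which is what makes the diffusion term cancel precisely at criticality and leaves a strictly positive multiple of the surviving $L^{m}$ norm under the strict mass condition. This bookkeeping is already carried out in Lemma \ref{infimum for FEF4}, so here it need only be transcribed; everything else follows directly from the free energy inequality and the simultaneous blow-up criterion \eqref{extension criterion2}.
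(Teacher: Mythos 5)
Your proposal is correct and follows essentially the same route as the paper: the free energy inequality \eqref{inequality for free energy2} combined with the coercivity lower bound of Lemma \ref{infimum for FEF4} (whose constant matching with $M_{2c}$, $M_{1c}$, $M_c$ you re-derive rather than quote) yields a uniform bound on the relevant critical norm, and the extensibility criterion \eqref{extension criterion2} then forces $T_{\max}=\infty$. The only difference is presentational — the paper simply cites the lemma's conclusion where you transcribe its computation.
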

		\begin{proof}
			We just infer from (\ref{inequality for free energy2}) and Lemma \ref{infimum for FEF4} that
			\begin{equation*}
				\begin{split}
					 \left(c_dC_{*}\right)^{\frac{m_1}{m_1-1}}&(m_1-1)^{\frac{m_1}{m_1-1}}m^{-\frac{m_1}{m_1-1}}_1\left(M^{\frac{2m_1}{d(m_1-1)}}_{2c}-M^{\frac{2m_1}{d(m_1-1)}}_{2}\right)\|w\|^{m_2}_{m_2}\\
					&\leq\mathcal{F}[u,w]\leq\mathcal{F}[u_0,w_0].
				\end{split}
			\end{equation*}
			Due to (\ref{subcritical condition}), there exists $C>0$ such that for all $t\in[0,T_{\max})$ we have $\|w\|_{m_2}\leq C$ . Then the extensibility criterion in Theorem {\ref{local existence theorem}} makes sure that $T_{\max}=\infty$.
			The other cases can be similarly obtained.
		\end{proof}

		\section{Blow up}
		Our last section concerns finite-time blow-up phenomenon when ${\bf{m}}$ is critical or super-critical. These results actually show that lines $L_i$, $i=1,2$ are optimal in view of the global existence for sub-critical case.  The following second moment of solutions can be achieved in a straightforward computation.
		\begin{lemma}{\label{second moment}}
			Let $(u_0,w_0)$ satisfy (\ref{intial data for u and w}), and let $(u,w)$ be a free energy solution of (\ref{TSTC}) on $[0,T_{\max})$ with $T_{\max}\in(0,\infty]$. Then
			$$
			\frac{d}{dt}I(t)=G(t)\,\,\,\text{for all}
			\,\,\,\,t\in(0,T_{\max}),
			$$
			where
			$$
			I(t):=\int_{\mathbb{R}^d}|x|^2\left(u(x,t)+w(x,t)\right)dx
			$$
			and
			\begin{align*}
				G(t):=&2d\int_{\mathbb{R}^d}u^{m_1}(x,t)dx
				+2d\int_{\mathbb{R}^d}w^{m_2}(x,t)dx\\
				&-2c_d(d-2)\iint_{\mathbb{R}^d\times\mathbb{R}^d}\frac{u(x,t)w(y,t)}{|x-y|^{d-2}}dxdy.
			\end{align*}	
		\end{lemma}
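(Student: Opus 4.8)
The plan is to differentiate $I(t)$ in time, substitute the two evolution equations in \eqref{TSTC}, and integrate by parts against the weight $|x|^2$. Formally,
\[
\frac{d}{dt}I(t)=\int_{\mathbb{R}^d}|x|^2\big(\Delta u^{m_1}-\nabla\cdot(u\nabla v)\big)dx+\int_{\mathbb{R}^d}|x|^2\big(\Delta w^{m_2}-\nabla\cdot(w\nabla z)\big)dx.
\]
Using $\Delta|x|^2=2d$ and $\nabla|x|^2=2x$, a double integration by parts sends the diffusion terms to $2d\int u^{m_1}dx+2d\int w^{m_2}dx$ and the drift terms to $2\int x\cdot u\nabla v\,dx+2\int x\cdot w\nabla z\,dx$. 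It then remains only to identify these drift contributions with the interaction term appearing in $G(t)$.

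For the cross terms I would use the explicit kernel. Since $v=\mathcal{K}\ast w$, one has $\nabla v(x)=-c_d(d-2)\int_{\mathbb{R}^d}\frac{x-y}{|x-y|^d}w(y)\,dy$, and likewise $\nabla z$ with $u$ in place of $w$. Substituting and applying Fubini gives
\begin{align*}
2\int_{\mathbb{R}^d}x\cdot u\nabla v\,dx+2\int_{\mathbb{R}^d}x\cdot w\nabla z\,dx
=&-2c_d(d-2)\iint_{\mathbb{R}^d\times\mathbb{R}^d}\frac{x\cdot(x-y)}{|x-y|^d}u(x)w(y)\,dxdy\\
&-2c_d(d-2)\iint_{\mathbb{R}^d\times\mathbb{R}^d}\frac{y\cdot(y-x)}{|x-y|^d}u(x)w(y)\,dxdy,
\end{align*}
where in the second double integral I renamed the variables $x\leftrightarrow y$ to exhibit the common density $u(x)w(y)$. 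Symmetrizing and using the identity $x\cdot(x-y)+y\cdot(y-x)=|x-y|^2$ collapses the kernel to $|x-y|^{2-d}$, producing exactly $-2c_d(d-2)\,\mathcal{H}[u,w]$, and hence $\frac{d}{dt}I(t)=G(t)$.

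The main obstacle is that everything above is formal: the weight $|x|^2$ is unbounded, $(u,w)$ is only a free energy solution, and the time derivative must be justified. I would first record that the second moment stays finite and absolutely continuous in $t$, propagating the hypothesis $u_0,w_0\in L^1(\mathbb{R}^d;(1+|x|^2)dx)$ from \eqref{intial data for u and w}; the interaction double integral is finite by the Hardy--Littlewood--Sobolev inequality (Lemma \ref{HLS inequality}) together with $u,w\in L^1\cap L^\infty$, and all bulk integrations by parts are legitimate because $(u^{m_1},w^{m_2})\in L^2(0,T;H^1(\mathbb{R}^d))$ while $\nabla v,\nabla z$ are bounded by Lemma \ref{the condition for global existence}. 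To handle the unbounded weight I would replace $|x|^2$ by a smooth compactly supported cutoff $|x|^2\chi(x/R)$, run the identical computation for the approximating strong solutions $(u_\epsilon,w_\epsilon)$ of \eqref{TSTC approxiamation}, where all manipulations are classical, integrate in time, and then pass to the limits $\epsilon\to0$ and $R\to\infty$. The only delicate point is that the error terms generated by derivatives of the cutoff are supported on the annulus $\{|x|\sim R\}$; these are controlled by the finiteness of $I(t)$ and the uniform $L^p$ bounds of Lemma \ref{the condition for global existence}, and they vanish as $R\to\infty$.
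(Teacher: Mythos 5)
Your proposal is correct and follows essentially the same route as the paper: differentiate $I(t)$, integrate by parts against $|x|^2$ to produce the $2d\int u^{m_1}+2d\int w^{m_2}$ terms, insert the explicit Newtonian kernel, and symmetrize the two drift integrals via $x\cdot(x-y)+y\cdot(y-x)=|x-y|^2$ to collapse them to $-2c_d(d-2)\mathcal{H}[u,w]$. The paper in fact stops at the formal computation, so your additional cutoff/approximation discussion to justify the unbounded weight is more careful than (but consistent with) the published argument.
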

		\begin{proof}
			We differentiate the second moment to see that
			\begin{equation*}
				\begin{split}
					\frac{d}{dt}&\int_{\mathbb{R}^d}|x|^2(u(x,t)+w(x,t))dx\\
					=&\int_{\mathbb{R}^d}|x|^2(\Delta u^{m_1}-\nabla\cdot(u\nabla v))dx
					+\int_{\mathbb{R}^d}|x|^2(\Delta w^{m_2}-\nabla\cdot(w\nabla z))dx\\
					=&2d\int_{\mathbb{R}^d}u^{m_1}(x,t)dx+2d\int_{\mathbb{R}^d}w^{m_2}(x,t)dx\\
					&+2\iint_{\mathbb{R}^d\times\mathbb{R}^d}[x\cdot\nabla \mathcal{K}(x-y)]u(x,t)w(y,t)dxdy\\
					&+2\iint_{\mathbb{R}^d\times\mathbb{R}^d}[x\cdot\nabla \mathcal{K}(x-y)]u(y,t)w(x,t)dxdy.
				\end{split}
			\end{equation*}
			With $\mathcal{K}(x)=c_d\frac{1}{|x|^{d-2}}$, we have
			\begin{equation*}
				\begin{split}		
					2\iint_{\mathbb{R}^d\times\mathbb{R}^d}&[x\cdot\nabla \mathcal{K}(x-y)]u(x,t)w(y,t)dxdy\\
					=&-2c_d(d-2)\iint_{\mathbb{R}^d\times\mathbb{R}^d}\frac{(x-y)\cdot x}{|x-y|^d}u(x,t)w(y,t)dxdy\\
					=&-2c_d(d-2)\iint_{\mathbb{R}^d\times\mathbb{R}^d}\frac{|x|^2}{|x-y|^d}u(x,t)w(y,t)dxdy\\
					&+2c_d(d-2)\iint_{\mathbb{R}^d\times\mathbb{R}^d}\frac{x\cdot y}{|x-y|^d}u(x,t)w(y,t)dxdy\\
					=&-c_d(d-2)\iint_{\mathbb{R}^d\times\mathbb{R}^d}\frac{|x|^2}{|x-y|^d}u(x,t)w(y,t)dxdy\\
					&-c_d(d-2)\iint_{\mathbb{R}^d\times\mathbb{R}^d}\frac{|y|^2}{|x-y|^d}u(y,t)w(x,t)dxdy\\
					&+2c_d(d-2)\iint_{\mathbb{R}^d\times\mathbb{R}^d}\frac{x\cdot y}{|x-y|^d}u(x,t)w(y,t)dxdy\\
				\end{split}
			\end{equation*}
			and
			\begin{equation*}
				\begin{split}		
					2\iint_{\mathbb{R}^d\times\mathbb{R}^d}&[x\cdot\nabla \mathcal{K}(x-y)]u(y,t)w(x,t)dxdy\\
					=&-c_d(d-2)\iint_{\mathbb{R}^d\times\mathbb{R}^d}\frac{|x|^2}{|x-y|^d}u(y,t)w(x,t)dxdy\\
					&-c_d(d-2)\iint_{\mathbb{R}^d\times\mathbb{R}^d}\frac{|y|^2}{|x-y|^d}u(x,t)w(y,t)dxdy\\
					&+2c_d(d-2)\iint_{\mathbb{R}^d\times\mathbb{R}^d}\frac{x\cdot y}{|x-y|^d}u(x,t)w(y,t)dxdy.
				\end{split}
			\end{equation*}
			Combining above equations, it follows that
			\begin{equation*}
				\begin{split}		
					 \frac{d}{dt}\int_{\mathbb{R}^d}|x|^2(u(x,t)+&w(x,t))dx=2d\int_{\mathbb{R}^d}u^{m_1}(x,t)dx+2d\int_{\mathbb{R}^d}w^{m_2}(x,t)dx\\
					&-c_d(d-2)\iint_{\mathbb{R}^d\times\mathbb{R}^d}\frac{|x|^2+|y|^2}{|x-y|^d}u(x,t)w(y,t)dxdy\\
					&-c_d(d-2)\iint_{\mathbb{R}^d\times\mathbb{R}^d}\frac{|x|^2+|y|^2}{|x-y|^d}u(y,t)w(x,t)dxdy\\
					&+4c_d(d-2)\iint_{\mathbb{R}^d\times\mathbb{R}^d}\frac{x\cdot y}{|x-y|^d}u(x,t)w(y,t)dxdy\\
					=&2d\int_{\mathbb{R}^d}u^{m_1}(x,t)dx+2d\int_{\mathbb{R}^d}w^{m_2}(x,t)dx\\
					&-2c_d(d-2)\iint_{\mathbb{R}^d\times\mathbb{R}^d}\frac{u(x,t)w(y,t)}{|x-y|^{d-2}}dxdy,
				\end{split}
			\end{equation*}
			which readily implies the lemma.
		\end{proof}

		We construct initial data which ensures the nonnegativity of $G(0)$.
		\begin{lemma}{\label{initial data ensruing blowup}}
			Let ${\bf{m}}$ be critical or super-critical.	There exists initial data $(u_0,w_0)$ satisfying (\ref{intial data for u and w}), and fulfilling
			\be\label{blow-up condition for initial data2}
			\begin{split}
				 &\frac{\left(\int_{\mathbb{R}^d}u^{\frac{(m_1+m_2-m_1m_2)d}{2m_2}}_0dx\right)^{\frac{2m_2}{(m_1+m_2-m_1m_2)d}}\left(\int_{\mathbb{R}^d}w^{\frac{(m_1+m_2-m_1m_2)d}{2m_1}}_0dx\right)^{\frac{2m_1}{(m_1+m_2-m_1m_2)d}}}{\left(\int_{\mathbb{R}^d}u^{\frac{(m_1+m_2-m_1m_2)d}{2m_2}}_0dx\right)^{\frac{2m_1m_2}{(m_1+m_2-m_1m_2)d}}+\left(\int_{\mathbb{R}^d}w^{\frac{(m_1+m_2-m_1m_2)d}{2m_1}}_0dx\right)^{\frac{2m_1m_2}{(m_1+m_2-m_1m_2)d}}}\\
				&>\begin{cases}
					N_0,\,\,\,&\text{if}\,\,\,m_1m_2+2\max\{m_1,m_2\}/d\leq m_1+m_2<m_1m_2+2m_1m_2/d,\\[0.2cm]
					2N_0,\,\,\,&\text{if}\,\,\, m_1+m_2\geq m_1m_2+2m_1m_2/d,
				\end{cases}
			\end{split}
			\ee
		and
			\be\label{negativity of G}
			\begin{split}
				G(0)<0,
			\end{split}
			\ee	
			where
			\begin{align*}
				 N_0=&\frac{\left(d/c_d\right)^{2-2/d}}{2^{1+2/d}(d-2)}\left(1+\frac{2m_1}{(m_1+m_2-m_1m_2)d}\right)\left(1+\frac{2m_2}{(m_1+m_2-m_1m_2)d}\right)
			\end{align*}
			and  $G$ is given in Lemma {\ref{second moment}}.
		\end{lemma}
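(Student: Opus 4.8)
The plan is to exhibit an explicit, compactly supported family of profiles and to read off both (\ref{negativity of G}) and (\ref{blow-up condition for initial data2}) from one and the same scale-invariant quantity. By Lemma \ref{second moment} the sign of $G(0)$ is governed by the competition between the two diffusion integrals and the attractive interaction; precisely, $G(0)<0$ is equivalent to
\[
c_d(d-2)\,\mathcal{H}[u_0,w_0]\;>\;d\int_{\mathbb{R}^d}u_0^{m_1}\,dx+d\int_{\mathbb{R}^d}w_0^{m_2}\,dx.
\]
First I would fix $s:=m_1+m_2-m_1m_2$ and note that $s>0$ throughout the critical/supercritical region (on $L_1$ one has $sd=2m_1$, on $L_2$ one has $sd=2m_2$, at $\mathbf{I}$ one has $s=m_c(2-m_c)>0$, and in the supercritical interior $s>2\max\{m_1,m_2\}/d>0$); this makes the exponents $\tfrac{sd}{2m_2}$ and $\tfrac{sd}{2m_1}$ in (\ref{blow-up condition for initial data2}) admissible. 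I would then take $u_0,w_0$ to be (mollified) multiples of the indicator of a ball, $u_0\simeq a\chi_{B(0,R)}$ and $w_0\simeq b\chi_{B(0,R)}$, which satisfy (\ref{intial data for u and w}) after a standard smoothing that leaves the leading-order computations below unchanged.

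For this family the diffusion integrals are explicit, $\int u_0^{m_1}=a^{m_1}|B_R|$ and $\int w_0^{m_2}=b^{m_2}|B_R|$, while the interaction is bounded below either by the exact ball integral $\mathcal{H}[u_0,w_0]=ab\,\kappa_d R^{d+2}$ with $\kappa_d=\iint_{B_1\times B_1}|\xi-\eta|^{2-d}\,d\xi\,d\eta$, or more crudely by the diameter estimate $\mathcal{H}[u_0,w_0]\ge (2R)^{2-d}M_1M_2$. Writing $P:=\int u_0^{sd/(2m_2)}$ and $Q:=\int w_0^{sd/(2m_1)}$, a direct computation shows that the left-hand side of (\ref{blow-up condition for initial data2}) collapses to $\tfrac{ab}{a^{m_1}+b^{m_2}}\,|B_1|^{2/d}R^2$, since all powers of $|B_1|R^d$ conspire to leave the clean exponent $2/d$. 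Thus both the blow-up criterion $G(0)<0$ and the hypothesis (\ref{blow-up condition for initial data2}) reduce to the single scale-invariant quantity $\tfrac{ab\,R^2}{a^{m_1}+b^{m_2}}$ exceeding a dimensional threshold. Fixing the threshold to be $N_0$ (resp. $2N_0$) and then letting $R\to\infty$ with $a,b$ fixed makes this quantity arbitrarily large, so that (\ref{blow-up condition for initial data2}) holds and, since the $R^{d+2}$ interaction term then dominates the $R^{d}$ diffusion terms, $G(0)<0$ holds as well.

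The two-case structure, with $N_0$ in the band $m_1m_2+2\max\{m_1,m_2\}/d\le m_1+m_2<m_1m_2+2m_1m_2/d$ and $2N_0$ when $m_1+m_2\ge m_1m_2+2m_1m_2/d$, is dictated by the sign of $\alpha:=m_1m_2-\tfrac{d}{2}s$, which vanishes exactly on $m_1+m_2=m_1m_2+2m_1m_2/d$ and which is the common homogeneity of all three terms of $G$ under the mass-type scaling $u_\lambda,w_\lambda$ of Section 1. When $\alpha>0$ the two diffusion integrals can be balanced against a single use of the interaction lower bound, producing the constant $N_0$; when $\alpha\le 0$ one instead splits the interaction into two pieces, each absorbing one diffusion term through Young's inequality, at the cost of a factor $2$.

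I expect the main obstacle to be exactly this bookkeeping: extracting the constant $N_0$ together with its factors $\bigl(1+\tfrac{2m_1}{sd}\bigr)\bigl(1+\tfrac{2m_2}{sd}\bigr)$ and the correct powers of $2$ requires choosing the Young's-inequality split optimally and tracking the degeneration of the scaling as one approaches the critical lines ($\alpha\to 0$), where the balance between aggregation and diffusion is most delicate. By contrast, the verification that $s>0$, that the constructed profiles meet (\ref{intial data for u and w}), and that the displayed reduction is exact for the indicator family are routine preliminaries.
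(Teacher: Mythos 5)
Your skeleton matches the paper's: build explicit compactly supported profiles, bound the two diffusion integrals from above and the interaction from below by a diameter estimate, and observe that both $G(0)<0$ and \x{blow-up condition for initial data2} reduce to the single scale-invariant ratio $\tfrac{ab\,R^2}{a^{m_1}+b^{m_2}}$ (your computation that the left side of \x{blow-up condition for initial data2} collapses to $\tfrac{ab}{a^{m_1}+b^{m_2}}|B_1|^{2/d}R^2$, using $\iota_1+\iota_2-\tfrac{2m_1m_2}{sd}=\tfrac{2}{d}$, is correct, as is the verification that $s>0$). However, there is a genuine gap: the lemma asserts the specific threshold $N_0$, and its factors $\bigl(1+\tfrac{2m_1}{sd}\bigr)\bigl(1+\tfrac{2m_2}{sd}\bigr)$ and the power $2^{1+2/d}$ are not universal constants to be "extracted by bookkeeping" --- they are exactly the $L^1$-normalizations $\int(1-|x|^d/a^d)_+^{\iota_i}dx=\tfrac{c_da^d}{d(1+\iota_i)}$ of the profiles $u_0=A(1-|x|^d/a^d)_+^{\iota_1}$, $w_0=B(1-|x|^d/a^d)_+^{\iota_2}$ that the paper chooses, with $\iota_1=\tfrac{2m_2}{sd}$, $\iota_2=\tfrac{2m_1}{sd}$. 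For your indicator family the threshold that guarantees $G(0)<0$ is a different dimensional constant, and it need not lie below $N_0$ (e.g.\ at ${\bf I}$ with $d=3$ one has $\iota_1=\iota_2=1$ and the indicator threshold exceeds the $N_0$-threshold), so "data satisfying \x{blow-up condition for initial data2} with this $N_0$ and also $G(0)<0$" does not follow from your construction; you only get the trivial reading of the lemma by sending $R\to\infty$. That reading is too weak for how the lemma is used: Theorems \ref{results for blowup down the Line 1 and Line 2} and \ref{blow-up theorem} invoke precisely the implication "\x{blow-up condition for initial data2} $\Rightarrow G(0)<0$" for the constructed family with \emph{small} support $a$, which is the opposite regime from your $R\to\infty$.

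A second, smaller error: your explanation of the $N_0$ versus $2N_0$ dichotomy via a Young's-inequality split of the interaction is not what drives it. You locate the boundary correctly ($m_1+m_2=m_1m_2+2m_1m_2/d$ is where $m_1\iota_1=m_2\iota_2=\tfrac{2m_1m_2}{sd}$ crosses $1$), but the factor of $2$ comes from the elementary estimate of the diffusion terms for the chosen profile: when $m_i\iota_i\ge 1$ one peels off one factor and gets $\int(1-|x|^d/a^d)_+^{m_i\iota_i}dx\le\int(1-|x|^d/a^d)_+dx=\tfrac{c_da^d}{2d}$, whereas when $m_i\iota_i<1$ one can only use $(1-|x|^d/a^d)_+^{m_i\iota_i}\le 1$ and gets $\tfrac{c_da^d}{d}$; no splitting of $\mathcal{H}$ occurs. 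To repair the proof you should adopt the profiles \x{construction for initial data} (or any family for which you actually carry out the constant-tracking to the stated $N_0$), rather than deferring the constant as an acknowledged obstacle.
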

		\begin{proof}
			Consider the following functions having the same compact support as initial data of form
			\be\label{construction for initial data}
			\begin{split}
				u_0(x)=&A\left(1-\frac{|x|^d}{a^d}\right)^{\iota_1}_{+},\,\,\,x\in\mathbb{R}^d,\\
				w_0(x)=&B\left(1-\frac{|x|^d}{a^d}\right)^{\iota_2}_{+},\,\,\,x\in\mathbb{R}^d,
			\end{split}
			\ee
			with
			\be\label{definition of iota1 and iota2}
			\begin{split}
				\iota_1:=\frac{2m_2}{(m_1+m_2-m_1m_2)d}\,\,\,\,\text{and}\,\,\, \iota_2:=\frac{2m_1}{(m_1+m_2-m_1m_2)d},
			\end{split}
			\ee
			where $A,B>0$ denote the maximum of the supports and $a>0$ denotes the size of the supports of initial data. Such constructions in (\ref{construction for initial data}) are inspired by {\cite[Section 6]{Sugiyama07-ADE}} which deals with one-single population Keller-Segel system.
			
			In the {\bf{Case 1}}: $m_1m_2+2\max\{m_1,m_2\}/d\leq m_1+m_2<m_1m_2+2m_1m_2/d,$ one has
			\be\label{upper for u_0 1}
			\begin{split}
				\int_{\mathbb{R}^d}u^{m_1}_0dx&=A^{m_1}	 \int_{\mathbb{R}^d}\left(1-\frac{|x|^d}{a^d}\right)^{\frac{2m_1m_2}{(m_1+m_2-m_1m_2)d}}_{+}dx\\
				&=A^{m_1}	 \int_{\mathbb{R}^d}\left(1-\frac{|x|^d}{a^d}\right)^{\frac{2m_1m_2}{(m_1+m_2-m_1m_2)d}-1}_{+}\left(1-\frac{|x|^d}{a^d}\right)_{+}dx\\
				&\leq A^{m_1}	\int_{\mathbb{R}^d}\left(1-\frac{|x|^d}{a^d}\right)_{+}dx\\
				&=c_da^dA^{m_1}/(2d)
			\end{split}
			\ee
			and
			$$
			\int_{\mathbb{R}^d}w^{m_2}_0dx\leq c_da^dB^{m_2}/(2d).
			$$
			For the {\bf{Case 2}}: $m_1+m_2>m_1m_2+2m_1m_2/d$,
			\begin{equation*}
				\begin{split}
					\int_{\mathbb{R}^d}u^{m_1}_0dx	 	
					&\leq A^{m_1}	\int_{|x|<a}1dx=c_da^dA^{m_1}/d,\\
					\int_{\mathbb{R}^d}w^{m_2}_0dx	 	
					&\leq c_da^dB^{m_2}/d.
				\end{split}
			\end{equation*}
			The coupled term can be estimated as
			\be\label{lower for u_0w_0}
			\begin{split}
				\iint_{\mathbb{R}^d\times\mathbb{R}^d}\frac{u_0(x)w_0(y)}{|x-y|^{d-2}}dxdy\geq& \min_{|x|,|y|\leq a}|x-y|^{-(d-2)}
				\int_{\mathbb{R}^d}u_0(x)dx\cdot\int_{\mathbb{R}^d}w_0(x)dx\\
				\geq& a^{-(d-2)} \int_{\mathbb{R}^d}A\left(1-\frac{|x|^d}{a^d}\right)^{\iota_1}_{+}dx
				\cdot \int_{\mathbb{R}^d}B\left(1-\frac{|x|^d}{a^d}\right)^{\iota_2}_{+}dx\\
				=& \frac{c^2_da^{d+2}}{d^2(1+\iota_1)(1+\iota_2)}
				AB .
			\end{split}
			\ee
			Since
			\be\label{upper estimate for G}
			\begin{split}
				G(0)\leq c_da^dA^{m_1}+c_da^dB^{m_2}-\frac{2c^3_da^{d+2}(d-2)}{d^2(1+\iota_1)(1+\iota_2)}AB
			\end{split}
			\ee
			by (\ref{upper for u_0 1})-(\ref{lower for u_0w_0}),
			to show (\ref{negativity of G}), it only needs to show the right side of  (\ref{upper estimate for G}) is negative such that
			\be\label{inequality for AB}
			\begin{split}
				\frac{AB}{A^{m_1}+B^{m_2}}a^2> N_1
			\end{split}
			\ee
			with
			\begin{equation*}
				\begin{split}
					N_1=&\frac{d^2\left(1+\iota_1\right)\left(1+\iota_2\right)}{2c^2_d(d-2)}
				\end{split}
			\end{equation*}
			in the {\bf{Case 1}}, whereas the right side will be replaced by $2N_1$ in the {\bf{Case 2}}.
			
			Since
			\begin{equation*}
				\begin{split}
					 \int_{\mathbb{R}^d}u^{\frac{1}{\iota_1}}_0dx=A^{\frac{1}{\iota_1}}\int_{\mathbb{R}^d}\left(1-\frac{|x|^d}{a^d}\right)_{+}dx&=c_da^dA^{\frac{1}{\iota_1}}/(2d),\\
					 \int_{\mathbb{R}^d}w^{\frac{1}{\iota_2}}_0dx=B^{\frac{1}{\iota_2}}\int_{\mathbb{R}^d}\left(1-\frac{|x|^d}{a^d}\right)_{+}dx&=c_da^dB^{\frac{1}{\iota_2}}/(2d)
				\end{split}
			\end{equation*}
			implies that
			\begin{equation*}
				\begin{split}
					 A=&\left(\frac{2d}{c_d}\int_{\mathbb{R}^d}u^{\frac{1}{\iota_1}}_0dx\right)^{\iota_1}a^{-\iota_1d},\,\,\,\,
					B=\left(\frac{2d}{c_d}\int_{\mathbb{R}^d}w^{\frac{1}{\iota_2}}_0dx\right)^{\iota_2}a^{-\iota_2d},
				\end{split}
			\end{equation*}
			then (\ref{inequality for AB}) can be rewritten as
			\begin{align*}
				\frac{AB}{A^{m_1}+B^{m_2}}a^2
				 &=\frac{\left(\frac{2d}{c_d}\int_{\mathbb{R}^d}u^{\frac{1}{\iota_1}}_0dx\right)^{\iota_1}\left(\frac{2d}{c_d}\int_{\mathbb{R}^d}w^{\frac{1}{\iota_2}}_0dx\right)^{\iota_2}}{{\left(\frac{2d}{c_d}\int_{\mathbb{R}^d}u^{\frac{1}{\iota_1}}_0dx\right)^{\iota_1m_1}}+\left(\frac{2d}{c_d}\int_{\mathbb{R}^d}w^{\frac{1}{\iota_2}}_0dx\right)^{\iota_2m_2}}\\
				 &=\left(\frac{2d}{c_d}\right)^{\frac{2}{d}}\frac{\left(\int_{\mathbb{R}^d}u^{\frac{1}{\iota_1}}_0dx\right)^{\iota_1}\left(\int_{\mathbb{R}^d}w^{\frac{1}{\iota_2}}_0dx\right)^{\iota_2}}{{\left(\int_{\mathbb{R}^d}u^{\frac{1}{\iota_1}}_0dx\right)^{\iota_1m_1}}+\left(\int_{\mathbb{R}^d}w^{\frac{1}{\iota_2}}_0dx\right)^{\iota_2m_2}}\\
				&> N_1\,\,\,\,
				\left(\text{or}\,\,\,2N_1\,\,\, \text{for the {\bf{Case 2}}}\right).
			\end{align*}
			Therefore, we have
			\begin{equation*}\label{blow-up condition for initial data}
				\begin{split}
					 &\frac{\left(\int_{\mathbb{R}^d}u^{\frac{1}{\iota_1}}_0dx\right)^{\iota_1}\left(\int_{\mathbb{R}^d}w^{\frac{1}{\iota_2}}_0dx\right)^{\iota_2}}{{\left(\int_{\mathbb{R}^d}u^{\frac{1}{\iota_1}}_0dx\right)^{\iota_1m_1}}+\left(\int_{\mathbb{R}^d}w^{\frac{1}{\iota_2}}_0dx\right)^{\iota_2m_2}}\\
					&>\begin{cases}
						N_2,\,\,\,&\text{if}\,\,\,m_1m_2+\frac{2}{d}\max\{m_1,m_2\}\leq m_1+m_2<m_1m_2+\frac{2}{d}m_1m_2,\\[0.2cm]
						2N_2,\,\,\,&\text{if}\,\,\, m_1+m_2\geq m_1m_2+\frac{2}{d}m_1m_2,
					\end{cases}
				\end{split}
			\end{equation*}
			with
			\begin{equation*}\label{}
				\begin{split}
					N_2=&\frac{\left(d/c_d\right)^{2-2/d}}{2^{1+2/d}(d-2)} \left(1+\frac{2m_1}{(m_1+m_2-m_1m_2)d}\right)
					\left(1+\frac{2m_2}{(m_1+m_2-m_1m_2)d}\right),
				\end{split}
			\end{equation*}
			which yields $G(0)<0$ with $N_0=N_2$.
		\end{proof}
		
		The blow-up results state that
		\begin{theorem}{\label{results for blowup down the Line 1 and Line 2}}
			Let ${\bf{m}}$ be critical or super-critical. Then one can find some  initial data  $(u_0,w_0)$ satisfying (\ref{intial data for u and w}) such that free energy solution $(u,w)$ of (\ref{TSTC}) with $(u,w)\mid_{t=0}=(u_0,w_0)$
			blows up in finite time.
		\end{theorem}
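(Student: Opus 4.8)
The plan is to argue by contradiction via the virial (second–moment) method. Assume the free energy solution produced by Theorem \ref{local existence theorem} is global, i.e. $T_{\max}=\infty$, and consider $I(t)=\int_{\mathbb{R}^d}|x|^2\bigl(u(x,t)+w(x,t)\bigr)\,dx\ge 0$. By Lemma \ref{second moment}, $I'(t)=G(t)$, so it suffices to exhibit initial data for which $G(t)$ stays below a fixed negative constant $-\delta$ on $[0,\infty)$: then $0\le I(t)\le I(0)-\delta t$ is negative for large $t$, a contradiction, forcing $T_{\max}<\infty$. Blow-up of $\|u\|_\infty+\|w\|_\infty$ then follows from the extensibility criterion (\ref{extension criterion1}), and, when ${\bf{m}}$ is critical, (\ref{extension criterion2}) upgrades this to the simultaneous blow-up of $\|u\|_{m_1}$ and $\|w\|_{m_2}$.

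The engine is the exact identity obtained by eliminating $\mathcal{H}$ between $G$ and $\mathcal{F}$. Since $c_d\mathcal{H}[u,w]=\tfrac{1}{m_1-1}\|u\|_{m_1}^{m_1}+\tfrac{1}{m_2-1}\|w\|_{m_2}^{m_2}-\mathcal{F}[u,w]$, substitution into the expression for $G$ in Lemma \ref{second moment} gives, using $m_c=2-2/d$,
\[
G(t)=2(d-2)\,\mathcal{F}[u(t),w(t)]+\frac{2d(m_1-m_c)}{m_1-1}\|u\|_{m_1}^{m_1}+\frac{2d(m_2-m_c)}{m_2-1}\|w\|_{m_2}^{m_2}.
\]
At the intersection point ${\bf{I}}$ (where $m_1=m_2=m_c$) the two correction terms vanish, so $G(t)=2(d-2)\mathcal{F}[u(t),w(t)]\le 2(d-2)\mathcal{F}[u_0,w_0]$ by the free-energy dissipation of Definition \ref{free energy solution}. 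Here Lemma \ref{initial data ensruing blowup} provides data with $G(0)<0$, which at ${\bf{I}}$ is exactly $\mathcal{F}[u_0,w_0]<0$, so $I'(t)\le 2(d-2)\mathcal{F}[u_0,w_0]<0$ closes the case immediately. The same bound $G(t)\le 2(d-2)\mathcal{F}[u_0,w_0]$ holds whenever ${\bf{m}}$ is supercritical with both $m_1\le m_c$ and $m_2\le m_c$, since then both correction coefficients are nonpositive and the $L^{m_i}$-terms only help; one then selects $(u_0,w_0)$ with $\mathcal{F}[u_0,w_0]<0$ (available because $\mathcal{F}$ is unbounded below in the supercritical regime by a concentrating rescaling).

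For the critical lines $L_1,L_2$ and the remaining supercritical range where one exponent exceeds $m_c$, exactly one correction coefficient in the identity is positive, so the free energy no longer controls the sign of $G$ on its own. Here I would again start from Lemma \ref{initial data ensruing blowup}, constructing the compactly supported data (\ref{construction for initial data}) that satisfies the scaling-invariant condition (\ref{blow-up condition for initial data2}) and hence $G(0)<0$. The task is then to \emph{propagate} this negativity. On $L_1$ (where $m_1>m_c$, $m_2<m_c$) the offending term is $\tfrac{2d(m_1-m_c)}{m_1-1}\|u\|_{m_1}^{m_1}$, and the plan is to absorb it by combining the free-energy monotonicity with the sharp interpolation bound of Lemma \ref{estimate for H lem2}, which on $L_1$ controls $\|u\|_{m_1}$ through $\|w\|_{m_2}$ and the conserved mass $M_2$ (in the spirit of (\ref{u controlled by w})--(\ref{w controlled by u})), so as to reduce the bound on $G(t)$ to a coercive expression in $\|w\|_{m_2}^{m_2}$ that remains $\le-\delta$; the case $L_2$ is symmetric.

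I expect this propagation step --- keeping $G$ uniformly negative for all $t\in[0,T_{\max})$ when the two diffusion exponents sit on opposite sides of $m_c$ --- to be the main obstacle, since it is precisely where the strength of diffusion in one species competes with the cross-attraction and a naive estimate only yields $G(0)<0$ at the initial time. Once the uniform bound $G(t)\le-\delta$ is secured in every regime, the concluding contradiction $0\le I(t)\le I(0)-\delta t$ is routine, and the theorem follows.
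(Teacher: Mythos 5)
Your virial framework, the use of Lemma \ref{initial data ensruing blowup} to produce data with $G(0)<0$, and the identity
\begin{equation*}
G(t)=2(d-2)\,\mathcal{F}[u(t),w(t)]+\frac{2d(m_1-m_c)}{m_1-1}\|u(t)\|_{m_1}^{m_1}+\frac{2d(m_2-m_c)}{m_2-1}\|w(t)\|_{m_2}^{m_2}
\end{equation*}
are all correct, and your treatment of the intersection point ${\bf I}$ coincides with what the paper itself does in the discussion following this theorem. The difficulty is that the cases you defer --- the lines $L_1$, $L_2$ and the supercritical points with one exponent above $m_c$ --- are precisely the substance of the theorem, and the propagation step you sketch for them would fail. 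On $L_1$ the coefficient of $\|u\|_{m_1}^{m_1}$ in the identity is positive, and the interpolation bounds of Lemma \ref{estimate for H lem2} (or the relations (\ref{u controlled by w})--(\ref{w controlled by u})) only control $\|u\|_{m_1}^{m_1}$ \emph{from above} by a positive multiple of $\|w\|_{m_2}^{m_2}$; inserting such a bound into your identity makes the estimate for $G(t)$ less negative, not coercively negative, so nothing in your outline keeps $G(t)\le-\delta$ for positive times, as you yourself acknowledge.

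The paper closes exactly these cases by a different and more elementary mechanism that never propagates the sign of $G$ through the free energy. It uses continuity in time only to obtain $G(t)<G(0)/2$ on some interval $[0,T^*]$, and then exploits the explicit profiles (\ref{construction for initial data}): keeping the mass functionals in (\ref{blow-up condition for initial data2}) fixed and shrinking the support radius $a$, one has $I(0)=O(a^{d+2}(A+B))$ while $-G(0)\gtrsim a^{d+2}AB$ under (\ref{inequality for AB}), so $I(0)/|G(0)|=O(A^{-1}+B^{-1})\to 0$ and $I(T^*)<I(0)+G(0)T^*/2<0$, contradicting $I\ge 0$. To complete your argument you should either adopt this smallness-of-support step or supply a genuinely new mechanism forcing $G$ to stay negative on $L_1$ and $L_2$; the free-energy identity alone does not do it there.
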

		\begin{proof}
			For a given initial data $(u_0,w_0)$ in (\ref{construction for initial data}) satisfying (\ref{blow-up condition for initial data2}), then $G(0)<0$ from Lemma \ref{initial data ensruing blowup}. By the continuity argument, there exists $T^*>0$ such that
			\begin{equation*}\label{}
				\begin{split}
					G(t)<G(0)/2\,\,\,\text{for all}\,\,\,t\in[0,T^*],
				\end{split}
			\end{equation*}\label{}
			where from Lemma \ref{second moment}, one obtains $\frac{d}{dt}I(t)<G(0)/2$ for all  $t\in[0,T^*]$.
			Integrating by parts, it follows that
			\be\label{comparation between I0 and G0}
			\begin{split}
				I(T^*)<I(0)+G(0)T^*/2.
			\end{split}
			\ee
			As
			\be\label{upper estimate for I0}
			\begin{split}
				 I(0)=&\int_{\mathbb{R}^d}|x|^2\left(A\left(1-\frac{|x|^d}{a^d}\right)^{\iota_1}_{+}+B\left(1-\frac{|x|^d}{a^d}\right)^{\iota_2}_{+}\right)dx\\
				=&A\int_{|x|\leq a}|x|^2\left(1-\frac{|x|^d}{a^d}\right)^{\iota_1}dx+B\int_{|x|\leq a}|x|^2\left(1-\frac{|x|^d}{a^d}\right)^{\iota_2}dx\\
				 =&c_dA\int^a_0\left(1-\frac{r^d}{a^d}\right)^{\iota_1}r^{d+1}dr+c_dB\int^a_0\left(1-\frac{r^d}{a^d}\right)^{\iota_2}r^{d+1}dr\\
				 =&(c_da^{d+2}A)/d\int^1_0\left(1-r\right)^{\iota_1}r^{2/d}dr+(c_da^{d+2}B)/d\int^1_0\left(1-r\right)^{\iota_2}r^{2/d}dr\\
				=&(c_da^{d+2}AN_3)/d+(c_da^{d+2}BN_4)/d\\
			\end{split}
			\ee
			with $\iota_1,\iota_2$ given in (\ref{definition of iota1 and iota2}) and
			\begin{equation*}
				\begin{split}
					 N_3:=\int^1_0\left(1-r\right)^{\iota_1}r^{2/d}dr<\infty\,\,\,\text{and}\,\,\,\,N_4:=\int^1_0\left(1-r\right)^{\iota_2}r^{2/d}dr<
					\infty,
				\end{split}
			\end{equation*}
			then inserting (\ref{upper estimate for G}) and (\ref{upper estimate for I0}) into (\ref{comparation between I0 and G0}),  the right side of (\ref{comparation between I0 and G0}) should be negative if we may fix small $a>0$ such that
			\begin{equation*}
				\begin{split}
					\frac{T^*}{2}&\cdot\left[ \frac{2c^3_da^{d+2}(d-2)}{d^2(1+\iota_1)(1+\iota_2)}AB-c_da^dA^{m_1}-c_da^dB^{m_2}\right]\\
					&\geq(c_da^{d+2}AN_3)/d+(c_da^{d+2}BN_4)/d.
				\end{split}
			\end{equation*}
			More precisely, if
			\begin{align*}
				&\frac{dT^*}{2}\cdot\Big[\frac{2^{1+2/d}(d-2)}{(1+\iota_1)(1+\iota_2)}\left(\frac{c_d}{d}\right)^{2-2/d}
				 \cdot\left(\int_{\mathbb{R}^d}u^{\frac{1}{\iota_1}}_0dx\right)^{\iota_1}\left(\int_{\mathbb{R}^d}w^{\frac{1}{\iota_2}}_0dx\right)^{\iota_2}\\ &-\left(\int_{\mathbb{R}^d}u^{\frac{1}{\iota_1}}_0dx\right)^{m_1\iota_1}-\left(\int_{\mathbb{R}^d}w^{\frac{1}{\iota_2}}_0dx\right)^{m_2\iota_2}\Big]\\
				&\geq \left(\frac{2d}{c_d}\right)^{(1-m_1)\iota_1}\left(\int_{\mathbb{R}^d}u^{\frac{1}{\iota_1}}_0dx\right)^{\iota_1}a^{d\iota_2}N_3+\left(\frac{2d}{c_d}\right)^{(1-m_2)\iota_2}\left(\int_{\mathbb{R}^d}w^{\frac{1}{\iota_2}}_0dx\right)^{\iota_2}a^{d\iota_1}N_4,
			\end{align*}
			this leads to a contradiction after time $T^*$ since $I(t)$ is always nonnegative for all $t>0$. Hence the solutions blow up in finite time.
		\end{proof}

		If ${\bf{m}}$ is ${\bf{I}}$, Theorem {\ref{results for blowup down the Line 1 and Line 2} shows that the blow up condition (\ref{blow-up condition for initial data2}) can be written as
			\be\label{blow up condition for I}
			\begin{split}
				&\frac{M_1M_2}{M^{m_c}_1+M^{m_c}_2}> \frac{1}{2(d-2)}\cdot\left(\frac{2d}{c_d}\right)^{m_c},
			\end{split}
			\ee
			since
			\begin{equation*}
				\begin{split}
					\frac{d}{dt}I(t)=G(t)=2(d-2)\mathcal{F}[u(t),w(t)]\leq 2(d-2)\mathcal{F}[u_0,w_0]=G(0)<0
				\end{split}
			\end{equation*}
			if (\ref{blow up condition for I}) holds, then the second moment will be negative after some time and it contradicts the non-negativity of $u$ and $w$.
			
			We improve blow-up arguments if ${\bf{m}}$ is ${\bf{I}}$ by using a different method and summarize the blows up results on the lines $L_1,L_2$ and intersection point $\bf{I}$ as
			\begin{theorem}{\label{blow-up theorem}}
			Let ${\bf{m}}$ be critical.	Suppose that $(u,w)$ is a free energy solution of (\ref{TSTC}) with $\|u_0\|_1=M_1$, $\|w_0\|_1=M_2$ fulfilling (\ref{intial data for u and w}).
				
				If ${\bf{m}}$ is on $L_1$, for sufficiently small size of the supports of $(u_0,w_0)$ one asserts that blow up happens if
				$$
				 \frac{\left(\int_{\mathbb{R}^d}u^{m_1/m_2}_0dx\right)^{m_2/m_1}\left(\int_{\mathbb{R}^d}w_0dx\right)}{\left(\int_{\mathbb{R}^d}u^{m_1/m_2}_0dx\right)^{m_2}+\left(\int_{\mathbb{R}^d}w_0dx\right)^{m_2}}>N_0
				$$
				with $N_0$ given in Lemma \ref{initial data ensruing blowup}.
				
				If ${\bf{m}}$ is on $L_2$, for sufficiently small size of the supports of $(u_0,w_0)$  blow-up solution can be constructed if
				$$
				 \frac{\left(\int_{\mathbb{R}^d}u_0dx\right)\left(\int_{\mathbb{R}^d}w^{m_2/m_1}_0dx\right)^{m_1/m_2}}{\left(\int_{\mathbb{R}^d}u_0dx\right)^{m_1}+\left(\int_{\mathbb{R}^d}w^{m_2/m_1}_0dx\right)^{m_1}}>N_0.
				$$
				If ${\bf{m}}$ is $\bf{I}$, blow up occurs if
				$$
				M_1M_2/(M^{m_c}_1+M^{m_c}_2)>M^{2/d}_c/2.
				$$
				Finally, let $(u,w)$ blow up in finite time  $T_{\max}$.  Then $T_{\max}<\infty$ implies that
				$$
				\lim_{t\rightarrow T_{\max}}\|u\|_{m_1}=\lim_{t\rightarrow T_{\max}}\|w\|_{m_2}=\infty.
				$$
			\end{theorem}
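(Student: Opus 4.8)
\emph{Plan.} I would split the argument according to the three critical regimes $L_1$, $L_2$, $\mathbf{I}$, reducing the two line cases to the construction already in hand and treating the point $\mathbf{I}$ by a sharper variational argument. For $\mathbf{m}$ on $L_1$ I would simply specialize Theorem~\ref{results for blowup down the Line 1 and Line 2} together with Lemma~\ref{initial data ensruing blowup}. On $L_1$ one has $m_1+m_2-m_1m_2=2m_1/d$, so the exponents in \eqref{definition of iota1 and iota2} become $\iota_1=m_2/m_1$ and $\iota_2=1$; hence $1/\iota_1=m_1/m_2$, $1/\iota_2=1$ and $\iota_1 m_1=\iota_2 m_2=m_2$, so the general blow-up condition \eqref{blow-up condition for initial data2} collapses to exactly the quotient displayed for $L_1$. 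Moreover, since $m_2<m_c<m_1$ on $L_1$ we have $\max\{m_1,m_2\}=m_1$ and $m_1+m_2=m_1m_2+2m_1/d<m_1m_2+2m_1m_2/d$, so we sit in \textbf{Case 1} of Lemma~\ref{initial data ensruing blowup} and the threshold is $N_0$, not $2N_0$. The case $L_2$ follows verbatim after exchanging the two species, giving $\iota_1=1$, $\iota_2=m_1/m_2$. Thus for a sufficiently small support size the constructed data force $G(0)<0$ and Theorem~\ref{results for blowup down the Line 1 and Line 2} gives finite-time blow-up.

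The heart of the proof is the improved statement at $\mathbf{I}$. Here $m_1=m_2=m_c$ and $m_c-1=(d-2)/d$, so a direct comparison of $G$ in Lemma~\ref{second moment} with $\mathcal{F}$ gives the identity $G(t)=2(d-2)\,\mathcal{F}[u(t),w(t)]$; combined with the free energy dissipation this yields $\frac{d}{dt}I(t)=G(t)\le 2(d-2)\,\mathcal{F}[u_0,w_0]$. I would then build initial data making the right-hand side strictly negative. Taking the radial extremizer $f^*$ from Theorem~\ref{extremals of H in case 3}, normalized so that $\|f^*\|_1=\|f^*\|_{m_c}=1$, and setting $u_0=M_1 f^*$, $w_0=M_2 f^*$ (a proportional, hence admissible, extremizing pair with compact support, finite second moment, and satisfying \eqref{intial data for u and w}), the identity $\tfrac1d+\tfrac{m_c}{2}=1$ valid at $\mathbf{I}$ gives $\|u_0\|_{m_c}^{m_c}=M_1^{m_c}$, $\|w_0\|_{m_c}^{m_c}=M_2^{m_c}$ together with equality in the $C_c$-inequality \eqref{upper for critical number}, namely $\mathcal{H}[u_0,w_0]=C_cM_1M_2$. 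Hence
\[
\mathcal{F}[u_0,w_0]=\frac{M_1^{m_c}+M_2^{m_c}}{m_c-1}-c_dC_cM_1M_2 ,
\]
which is negative precisely when $\frac{M_1M_2}{M_1^{m_c}+M_2^{m_c}}>\frac{1}{c_dC_c(m_c-1)}=\frac{M_c^{2/d}}{2}$.

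With $\mathcal{F}[u_0,w_0]<0$ I would set $\delta:=-2(d-2)\mathcal{F}[u_0,w_0]>0$, so that $\frac{d}{dt}I(t)\le -\delta$ and $I(t)\le I(0)-\delta t$ becomes negative in finite time, contradicting $I\ge0$; this forces $T_{\max}<\infty$. The simultaneous blow-up assertion is then immediate: since $\mathbf{m}$ is critical, the extensibility criterion \eqref{extension criterion2} of Theorem~\ref{local existence theorem} applies and gives $\lim_{t\to T_{\max}}\|u\|_{m_1}=\lim_{t\to T_{\max}}\|w\|_{m_2}=\infty$ whenever $T_{\max}<\infty$.

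The main obstacle I anticipate is the $\mathbf{I}$ construction. One must verify that the extremal-based data genuinely satisfy the regularity \eqref{intial data for u and w} (in particular $\nabla u_0^{m_c}\in L^2$, which follows from the Lane--Emden structure of $f^*$ in Theorem~\ref{extremals of H in case 3}) and, crucially, that replacing the crude lower bound behind \eqref{blow up condition for I} by the sharp \emph{achievable} constant $C_c$ is what sharpens the threshold to $M_c^{2/d}/2$. The point is that one needs $\mathcal{H}[u_0,w_0]$ as \emph{large} as possible, so equality in HLS must be attained rather than merely the inequality. Finally, the exact mass-invariant scaling at $\mathbf{I}$ makes the sign of $\mathcal{F}[u_0,w_0]$ depend only on $M_1,M_2$ and not on the support size; this is precisely the feature that removes the smallness-of-support restriction still present on $L_1$ and $L_2$.
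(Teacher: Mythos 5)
Your proposal is correct and follows essentially the same route as the paper: specialization of Lemma \ref{initial data ensruing blowup} and Theorem \ref{results for blowup down the Line 1 and Line 2} on $L_1$, $L_2$ (with the correct identification $\iota_1=m_2/m_1$, $\iota_2=1$ and of \textbf{Case 1}), the identity $G=2(d-2)\mathcal{F}$ at $\bf{I}$ combined with extremizing data for $C_c$, and the extensibility criterion (\ref{extension criterion2}) for simultaneous blow-up. The only cosmetic difference is that at $\bf{I}$ you plug $u_0=M_1f^*$, $w_0=M_2f^*$ in directly and compute $\mathcal{F}[u_0,w_0]$, whereas the paper first builds a zero-energy pair $(u^*,w^*)$ at threshold masses $(M_1^*,M_2^*)$ and then rescales; the two computations are identical in substance.
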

			\begin{proof}
				The asserted blow-up conditions on the lines $L_1$ and $L_2$ just follow from Lemma {\ref{initial data ensruing blowup}} and Theorem \ref{results for blowup down the Line 1 and Line 2}. If $\bf{m}$ is $\bf{I}$, note that for any $M^*_1>0$ and $M^*_2>0$ such that
				\begin{equation}{\label{supercritical case initial data2}}
					\begin{split}
						M^*_1M^*_2/(M^{*m_c}_1+M^{*m_c}_2)=M^{2/d}_c/2,
					\end{split}
				\end{equation}
				there exists nonnegative function $(u^*,w^*)$ with $\|u^*\|_1=M^*_1$,$\|w^*\|_1=M^*_2$ fulfilling $\mathcal{F}[u^*,w^*]=0$.

				This can be seen by the fact that $C_c$ in (\ref{upper for critical number}) is
				\begin{equation*}
					\begin{split}
						C_{c}=\sup_{f\neq 0}\left\{\frac{\mathcal{H}[f,f]}{\|f\|^{2/d}_{1}\|f\|^{m_c}_{m_c}},\,\,\,f\in L^1(\mathbb{R}^d)\cap L^{m_c}(\mathbb{R}^d)\right\}
					\end{split}
				\end{equation*}
				from Theorem {\ref{extremals of H in case 3}}. From {\cite[Proposition 3.3]{Carrillo09-CVPDE}}, for any $M^*_1>0$ there exists nonnegative, radially symmetric and non-increasing function $u^*\in L^1(\mathbb{R}^d)\cap L^{m_c}(\mathbb{R}^d)$ \text{with}\,\,\,$\|u^*\|_1=M^*_1$ such that
				\begin{equation}{\label{supreme for u}}
					\begin{split}
						\|u^*\|^{m_c}_{m_c}=C^{-1}_c\|u^*\|^{-2/d}_1\mathcal{H}[u^*,u^*].
					\end{split}
				\end{equation}
				Define $w^*=M^*_2/M^*_1u^*$. Then $w^*\in L^1(\mathbb{R}^d)\cap L^{m_c}(\mathbb{R}^d)$ \text{with}\,\,\,$\|w^*\|_1=M^*_2$ and
				\begin{equation*}
					\begin{split}
						\mathcal{F}[u^*,w^*]=&0
					\end{split}
				\end{equation*}
				by (\ref{supercritical case initial data2}) and the definition of $M_c$. Then
				\begin{equation*}
					\begin{split}
						 c_d\mathcal{H}[u^*,w^*]=c_dM^*_2/M^*_1\mathcal{H}[u^*,u^*]=\frac{1}{m_c-1}\left(1+\left(\frac{M^*_2}{M^*_1}\right)^{m_c}\right)\|u^*\|^{m_c}_{m_c}.
					\end{split}
				\end{equation*}
				Given $u_0=\frac{M_1}{M^*_1}u^*$ and $w_0=\frac{M_2}{M^*_2}w^*$ with $\|u_0\|_1=M_1$ and $\|w_0\|_1=M_2$, then
				\begin{equation*}
					\begin{split}
						\mathcal{F}[u_0,w_0]=&\frac{1}{{m_c}-1}\|u_0\|^{m_c}_{m_c}+\frac{1}{{m_c}-1}\|w_0\|^{m_c}_{m_c}-c_d\mathcal{H}[u_0,w_0]\\
						=&\frac{1}{{m_c}-1}\left[\left(\frac{M_1}{M^*_1}\right)^{m_c}+\left(\frac{M_2}{M^*_1}\right)^{m_c}
						 -\frac{M_1M_2}{M^{*}_1M^{*}_2}\left(1+\left(\frac{M^*_2}{M^*_1}\right)^{m_c}\right)\right]\|u^*\|^{m_c}_{m_c}\\
						<& 0,
					\end{split}
				\end{equation*}
				since
				\begin{equation*}
					\begin{split}
						M_1M_2/(M^{{m_c}}_1+M^{{m_c}}_2)>M^*_1M^*_2/(M^{*{m_c}}_1+M^{*{m_c}}_2)=M^{2/d}_c/2.
					\end{split}
				\end{equation*}
				If $(u,w)$ is corresponding free energy solution with the initial data $(u_0,w_0)$, then
				\begin{equation*}
					\begin{split}
						\mathcal{F}[u(t),w(t)]\leq \mathcal{F}[u_0,w_0]<0,\,\,\,\,t>0
					\end{split}
				\end{equation*}
				by the decreasing property of $\mathcal{F}$. From Lemma \ref{second moment}, it follows that blow up occurs.

				To see the simultaneous blow-up phenomenon, from extensibility criterion in Theorem \ref{local existence theorem} we have
				\begin{equation*}\label{comparison between um1 and wm21}
					\begin{split}
						C\|w(t)\|^{m_2}_{m_2}+C\leq \|u(t)\|^{m_1}_{m_1}\leq C'\|w(t)\|^{m_2}_{m_2}+C'\,\,\,\,\text{for}\,\,\,\,t\in(0,T_{\max})
					\end{split}
				\end{equation*}
				with some $C>0$	and $C'>0$ if $\bf{m}$ is critical. Then all assertions have been proved.
			\end{proof}

			{\bf Acknowledgment.} JAC was supported the Advanced Grant Nonlocal-CPD (Nonlocal PDEs for Complex Particle Dynamics: 	 Phase Transitions, Patterns and Synchronization) of the European Research Council Executive Agency (ERC) under the European Union's Horizon 2020 research and innovation programme (grant agreement No. 883363). JAC was also partially supported by the EPSRC grant number EP/P031587/1. JAC acknowledges support through the Changjiang Visiting Professorship Scheme of the Chinese Ministry of Education. KL is partially supported by NSFC (Grant No. 11601516) and by Sichuan Science and Technology Program (Grant No. 2020YJ0060).

		\end{document}